\theoremstyle{plain}
\newtheorem{theorem}{Theorem}[section]
\newtheorem{proposition}[theorem]{Proposition}
\newtheorem{lemma}[theorem]{Lemma}
\newtheorem{corollary}[theorem]{Corollary}
\theoremstyle{definition}
\newtheorem{remark}[theorem]{Remark}
\newtheorem{definition}[theorem]{Definition}
\newtheorem{algorithm}[theorem]{Algorithm}
\DeclareMathOperator{\characteristic}{char}
\DeclareMathOperator{\Cl}{Cl}
\DeclareMathOperator{\End}{End}
\DeclareMathOperator{\GL}{GL}
\DeclareMathOperator{\Jac}{Jac}
\DeclareMathOperator{\Norm}{Norm}
\DeclareMathOperator{\ord}{ord}
\DeclareMathOperator{\col}{:}
\newcommand{\CC}{{\mathbf{C}}}
\newcommand{\FF}{{\mathbf{F}}}
\newcommand{\PP}{{\mathbf{P}}}
\newcommand{\QQ}{{\mathbf{Q}}}
\newcommand{\ZZ}{{\mathbf{Z}}}
\newcommand{\gotha}{{\mathfrak{a}}}
\newcommand{\gothabar}{\bar{\gotha}}                                            
\newcommand{\gothp}{{\mathfrak{p}}}
\newcommand{\gothpbar}{\bar{\gothp}}
\newcommand{\calC}{{\mathcal{C}}}
\newcommand{\calH}{{\mathcal{H}}}
\newcommand{\calO}{{\mathcal{O}}}
\newcommand{\Kp}{K^+}         
\newcommand{\eps}{\varepsilon}
\renewcommand{\hat}{\widehat}
\renewcommand{\tilde}{\widetilde}
\renewcommand{\mod}{\bmod}
\newcommand\lowtilde{\lower0.7ex\hbox{\textasciitilde}}
\newcommand{\mybar}[1]{
  \mathchoice
  {#1\llap{$\overline{\phantom{\displaystyle\rm#1}}$}}
  {#1\llap{$\overline{\phantom{\textstyle\rm#1}}$}}
  {#1\llap{$\overline{\phantom{\scriptstyle\rm#1}}$}}
  {#1\llap{$\overline{\phantom{\scriptscriptstyle\rm#1}}$}}
}  
\renewcommand{\bar}{\mybar}
\newcommand{\kb}{\bar{k}}
\newcommand{\nubar}{\bar{\nu}}
\newcommand{\pibar}{\bar{\pi}}
\newlength{\algindent}\settowidth{\algindent}{\textit{Output}:\hskip0.5em }
\newlength{\alglabel}\settowidth{\alglabel}{\textit{Output}:}
\newcounter{stepcount}
\newenvironment{algtop}
{\quad\begin{list}{\arabic{stepcount}.}{\leftmargin=\algindent\labelwidth=\algindent\itemsep=\smallskipamount\usecounter{stepcount}}}
{\end{list}}
\newcommand{\algin}{\item[\emph{Input}:]}
\newcommand{\algout}{\item[\emph{Output}:]}
\newenvironment{alglist}
{\quad\begin{list}{\arabic{stepcount}.}{\leftmargin=1.5em\labelwidth=1em\labelsep0.5em\itemsep=\smallskipamount\usecounter{stepcount}}}
{\end{list}}
\newcounter{substepcount}
\newenvironment{algsublist}
{\quad\begin{list}{(\/\rlap{\alph{substepcount}}\phantom{d}\/)}{\usecounter{substepcount}}}
{\end{list}}
\begin{document}

\title[Genus-2 curves and Jacobians]
{Genus-2 curves and Jacobians\\ with a given number of points}

\author[Br\"oker]{Reinier Br\"oker}        
\address{Department of Mathematics,
         Brown University,
         Box 1917,
         151 Thayer Street,
         Providence, RI 02912, USA}
\email{reinier@math.brown.edu}

\author[Howe]{\hbox{Everett W. Howe}}      
\address{Center for Communications Research,
         4320 Westerra Court,
         San Diego, CA 92121, USA}
\email{however@alumni.caltech.edu}
\urladdr{http://www.alumni.caltech.edu/\lowtilde{}however/}

\author[Lauter]{\hbox{Kristin E. Lauter}}   
\address{Microsoft Research,
         One Microsoft Way,
         Redmond, WA 98052, USA}
\email{klauter@microsoft.com}

\author[Stevenhagen]{\hbox{Peter Stevenhagen}}
\address{Mathematisch Instituut,
         Universiteit Leiden, 
         Postbus 9512, 
         2300 RA Leiden, The Netherlands}
\email{psh@math.leidenuniv.nl}

\date{28 March 2014; revised 10 August 2014}
\keywords{Curve, Jacobian, complex multiplication, Hilbert class polynomial,
          Igusa class polynomial}

\subjclass[2010]{Primary 14K22; Secondary 11G15, 11G20, 14G15}

\begin{abstract}
We study the problem of efficiently constructing a curve $C$ of genus~$2$ over
a finite field $\mathbf{F}$ for which either the curve $C$ itself or its 
Jacobian has a prescribed number $N$ of $\mathbf{F}$-rational points.

In the case of the Jacobian, we show that any `CM-construction' to produce the
required genus-$2$ curves necessarily takes time exponential in the size of its
input.

On the other hand, we provide an algorithm for producing a genus-$2$ curve with
a given number of points that, heuristically, takes polynomial time for most 
input values.  We illustrate the practical applicability of this algorithm by
constructing a genus-$2$ curve having exactly $10^{2014} + 9703$ (prime) 
points, and two genus-$2$ curves each having exactly $10^{2013}$ points.

In an appendix we provide a complete parametrization, over an arbitrary base 
field $k$ of characteristic neither $2$ nor~$3$, of the family of genus-$2$ 
curves over $k$ that have $k$-rational degree-$3$ maps to elliptic curves, 
including formulas for the genus-$2$ curves, the associated elliptic curves,
and the degree-$3$ maps.
\end{abstract}

\maketitle

\section{Introduction}
\label{S:intro}

\noindent
For an algebraic variety $V$ defined over a finite field $\FF$, a fundamental
quantity is its number $N=\# V(\FF)$ of $\FF$-rational points.  This quantity,
which we briefly refer to as the \emph{order} of $V$ over $\FF$, can be found
by a finite computation. However, in view of computer calculations and 
cryptographic applications, the problem of \emph{efficiently} counting the 
number of $\FF$-rational points of smooth algebraic varieties defined over a 
finite field has become a topic of intensive research in the last $25$ years.

There is a natural inverse to the point counting problem, which is 
mathematically and cryptographically interesting as well.  It is the problem of
efficiently constructing, for a given integer $N$, a smooth variety $V$ over a 
finite field $\FF$ such that $V$ has order $N$ over $\FF$.  Usually, one 
restricts the class of varieties $V$ under consideration by requiring that $V$
be, for example, a curve of a given genus or a surface of a given type.  In all
cases, the question can be phrased in two different ways with respect to~$\FF$.
One may either 
\begin{description}
\item [A] take \emph{both} $N$ and $\FF$ as input, and construct a variety $V$
      of order $N$ over $\FF$, or
\item [B] take only $N$ as input, and construct $\FF$ \emph{and} a variety $V$
      of order $N$ over $\FF$.
\end{description}
In the case of curves of genus~$1$, that is, \emph{elliptic curves}, it is a
major open problem to find an efficient algorithm for Problem A.  The main 
result of~\cite{BS} is that, at least heuristically, Problem B for elliptic 
curves admits an efficient solution if the input~$N$ is provided to the
algorithm in factored form.  In this paper, we generalize this result to curves
of genus~$2$.  More precisely, Problem B for elliptic curves admits \emph{two} 
natural analogues in higher genus, and our Main Theorems \ref{Th:J} 
and~\ref{Th:C} show that they give rise to rather different answers.

For a smooth, projective curve $C$ of genus $g$ defined over a finite 
field~$\FF$, the Jacobian $J=\Jac C$ of $C$ is a $g$-dimensional abelian
variety over $\FF$, and $J(\FF)$ is a finite abelian \emph{group}.  Using a
base point in $C(\FF)$, one may embed $C$ into $J$ under the Abel--Jacobi map.
In the elliptic case $g=1$, this leads to an identification of $C$ and~$J$,
but in higher genus, $C$ is a strict subvariety of $J$, and $C(\FF)$ is merely
a subset of the group $J(\FF)$. This leads to two mathematically natural
generalizations in genus $2$ of the construction problem for elliptic curves:
\begin{enumerate}
\item construct \emph{curves} of genus 2 of given order, or
\item construct curves of genus 2 with \emph{Jacobians} of given order.
\end{enumerate}
From a cryptographic point of view, the second generalization is the relevant
one, as current applications use the group $J(\FF)$ rather than the
set~$C(\FF)$.  We will consider both generalizations, taken in the setting of
Problem~B.

In Section \ref{S:Jacobians}, we consider the problem of efficiently
constructing, for a given integer $N$, a finite field $\FF$ and a genus-$2$
curve $C$ defined over $\FF$ such that $J=\Jac C$ has order $N$ over $\FF$.
In this case, we say that the pair $(C, \FF)$ \emph{realizes} $N$.  For our
purposes, it suffices to consider only \emph{quartic pairs} $(C, \FF)$ 
realizing $N$.  These are pairs for which the subring $\ZZ[\pi]\subset \End J$
generated by the Frobenius element $\pi$ in the endomorphism ring of $J$ is an
order in a quartic CM-field $K=\QQ(\pi)$; this condition is equivalent to the
characteristic polynomial of Frobenius for $C$ being irreducible.  The 
justification for restricting to quartic pairs is that non-quartic pairs will
only realize a zero-density subset of all possible input values~$N$; see 
Corollary~\ref{cor:split}.

In view of \cite{BS}, the natural approach to constructing quartic pairs 
$(C, \FF)$ realizing~$N$ consists in obtaining $C$ as the reduction of a 
genus-$2$ curve $\tilde C$ in characteristic zero with CM by $K$, since the 
Igusa modular invariants of such $\tilde C$ may be computed by CM-techniques.
As we explain at the end of Section \ref{S:Jacobians}, a so-called
\emph{CM-construction} of $(C, \FF)$ which, in an intermediary step, writes 
down the Igusa class polynomials of a curve $\tilde C$ in characteristic zero
with CM by $K$ that reduces to $C$ over $\FF$, is necessarily exponential in 
$\log\Delta_K$, the size of the discriminant $\Delta_K$ of $K=\QQ(\pi)$; see 
Corollary~\ref{cor:runtime}. As a consequence, CM-constructions are only 
computationally feasible for CM-fields~$K$ of small discriminant.

Given $N$, there are, up to isomorphism, only finitely many pairs $(C, \FF)$ 
realizing~$N$, so we may define the 
\emph{minimal genus-$2$ Jacobian discriminant} $\Delta(N)$ of~$N$ as the
smallest discriminant of a CM-field $K=\QQ(\pi)$ associated to a quartic pair
$(C, \FF)$ realizing $N$. There are two sets of $N$'s for which this definition
is not appropriate. The first is the zero-density set of those $N$ that are 
realized by non-quartic pairs.  The second is the set of those $N$ that are not
realized at all as orders of genus-$2$ Jacobians over finite fields. This is 
also a zero-density set (Theorem~\ref{thm:zerodensity}), and conjecturally it 
is even empty.  For $N$ in one of these two very thin sets, we formally put
$\Delta(N)=0$.  As we are to prove that $\Delta(N)$ tends to be large, this 
choice only strengthens Theorem~\ref{Th:J} below.

In the elliptic case~\cite{BS}, the expected minimal discriminant of the 
endomorphism algebra of an elliptic curve of order $N$ over a prime field 
grows, at least heuristically, as $(\log N)^2$, and this gives rise to 
efficient CM-constructions. For genus~$2$, we prove that this is not the case.

\begin{theorem}
\label{Th:J}
For an integer $N\in\ZZ_{>0}$, let $\Delta(N)$ be the minimal genus-$2$ 
Jacobian discriminant defined above.  Then we have
\[
\limsup_{N\to\infty} \frac{\Delta(N)}{\sqrt N}=+\infty.
\]
\end{theorem}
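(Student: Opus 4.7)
The plan is to prove the stronger density statement
\[
\#\bigl\{N\le X\col \Delta(N)\le M\sqrt{N}\bigr\}=o(X)
\]
as $X\to\infty$, for every fixed $M>0$. Theorem~\ref{Th:J} then follows immediately: the complementary set has density $1$ in $\{1,\dots,\lfloor X\rfloor\}$ and in particular is unbounded, so for every $M$ there are arbitrarily large $N$ with $\Delta(N)/\sqrt{N}>M$.

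Setting $D=M\sqrt{X}$, any such $N$ is realized by a Weil $q$-number $\pi\in\calO_K$ in some quartic CM field $K$ with $|\Delta_K|\le D$, via $N=\Norm_{K/\QQ}(1-\pi)$, and the Weil bound gives $q\le\sqrt{X}+O(X^{1/4})$. I will count the pairs $(K,\pi)$. First, I claim that the number of quartic CM fields with $|\Delta_K|\le D$ is $O(D)$. Each such $K$ contains a real quadratic subfield $\Kp$ with $|\Delta_{\Kp}|^2\mid|\Delta_K|$, so $|\Delta_{\Kp}|\le\sqrt{D}$; and for each $\Kp$, a standard count of quadratic extensions with bounded relative discriminant yields $O(D/|\Delta_{\Kp}|^2)$ totally imaginary extensions $K/\Kp$ with $|\Delta_K|\le D$. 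Summing over real quadratic $\Kp$ gives $O(D)\cdot\sum_{d\ge 5}1/d^2=O(D)$.

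Second, I claim that the number of Weil $q$-numbers $\pi\in\calO_K$ for a fixed quartic CM field $K$ and a fixed prime power $q$ is $O(1)$. The identity $(\pi)(\bar\pi)=(q)$ restricts the ideal $(\pi)$ to boundedly many choices, determined by distributing the exponents of the primes of $\calO_K$ above $p=\characteristic\FF_q$ compatibly with complex conjugation. Two generators of one such ideal that are both Weil $q$-numbers differ by a unit $u$ with $u\bar u=1$, hence by an element of $\mu_K$; and since $\varphi(n)\mid 4$ restricts those $n$ with $\zeta_n\in K$ to $n\le 12$, we have $|\mu_K|\le 12$. Summing over prime powers $q\le\sqrt{X}(1+o(1))$ via the prime number theorem yields $O(\sqrt{X}/\log X)$ Weil numbers per field.

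Combining, $\#\{N\le X\col 0<\Delta(N)\le M\sqrt{X}\}\le O(M\sqrt{X})\cdot O(\sqrt{X}/\log X) = O(MX/\log X)$, which is $o(X)$. The remaining $N$ with $\Delta(N)=0$, realized only by non-quartic pairs or not realized at all, form a zero-density set by Corollary~\ref{cor:split} and Theorem~\ref{thm:zerodensity}, completing the bound.

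The main obstacle is establishing the $O(1)$ bound on Weil $q$-numbers per pair $(K,q)$ \emph{independently} of the arithmetic of $K$ (class number, regulator, etc.): this rests on the boundedness of both the splitting data of $(q)$ and the roots-of-unity group $\mu_K$ in degree~$4$. The $O(D)$ bound on the number of quartic CM fields is the other nontrivial input, relying on the stratification by real quadratic subfield and the convergence of $\sum d_0^{-2}$ over fundamental discriminants.
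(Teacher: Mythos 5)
Your strategy is the same as the paper's: bound the number of quartic CM fields of discriminant at most $D=M\sqrt{X}$, bound the number of Weil $q$-numbers per pair $(K,q)$ by an absolute constant, sum over the $O(\sqrt{X}/\log X)$ admissible prime powers, and observe that the product is $o(X)$. Your direct density formulation and the paper's argument by contradiction are equivalent, and your $O(1)$ count of Weil $q$-numbers per $(K,q)$ is exactly the paper's Proposition~\ref{prop:howeslemma} (the paper sharpens this to $2w_K\le 24$, feeding into Corollary~\ref{cor:weilpolsnumber}, but the extra precision is immaterial).

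The one genuine divergence is the bound $O(D)$ on the number of quartic CM fields of discriminant at most~$D$. The paper invokes the asymptotics of Cohen, Diaz~y~Diaz, and Olivier~\cite{CDO} for $D_4$, $C_4$, and $V_4$ quartic fields; you instead stratify by the real quadratic subfield $\Kp$ and invoke ``a standard count'' of $O(D/|\Delta_{\Kp}|^2)$ quadratic extensions $K/\Kp$ with $|\Delta_K|\le D$. This is where there is a gap: the constant in that count is \emph{not} absolute. The number of quadratic extensions of $\Kp$ with relative discriminant of norm $\le Y$ grows like $c_{\Kp}Y$ with $c_{\Kp}$ proportional to $\mathrm{Res}_{s=1}\zeta_{\Kp}(s)=L(1,\chi_{\Delta_{\Kp}})$, which depends on $\Kp$. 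Your argument as written would give $O(D\sum_d d^{-2})=O(D)$ only if the implied constant were uniform over $\Kp$. The gap is repairable --- one has $L(1,\chi_d)\ll\log d$, so the sum $\sum_d (\log d)/d^2$ still converges, and one must also verify that the asymptotic $c_{\Kp}Y$ comes with an error term uniform enough to hold down to $Y$ comparable to $1$ --- but none of this is addressed, whereas the paper bypasses the issue entirely by quoting a published count.
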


\noindent
This theorem implies that \emph{any} genus-$2$ CM-construction for abelian
surfaces over finite fields of prescribed order $N$, with `CM-construction'
taken in the sense explained above, will have a worst case run time that is
exponential in the input size $\log N$.  Section \ref{S:Jacobians}, which 
contains the proof of Theorem \ref{Th:J}, provides an explicit worst case 
lower bound for the run time of genus-$2$ CM-constructions solving our problem 
(Corollary~\ref{cor:runtime}), as well as a strengthening of the theorem 
(Corollary~\ref{cor:generic}) that shows that the growth of the lim sup does 
not come from a thin set of $N$'s, and that $\Delta(N)/\sqrt N$ becomes in fact
large for `most'~$N$.

The proof of Theorem \ref{Th:J} is based on a `scarcity' of small quartic
CM-fields~$K$ that contain the Weil numbers corresponding to pairs $(C, \FF)$ 
realizing $N$. The difference with the genus-$1$ situation lies in the fact
that the cardinality of the base field~$\FF$ for genus-$2$ Jacobians of order
$N$ is about $\sqrt N$, and not~$N$. If one requires the curve $C$ itself to
have order $N$ over $\FF$, this problem disappears, and one can hope that, just
as in the elliptic case, efficient constructions can be given.

The current state of our knowledge on gaps between prime numbers does not allow
us to \emph{prove} that an elliptic curve or genus-$2$ curve of order $N$ over
a finite field exists for all~$N$, but, heuristically and in computational 
practice, this is never a problem. In Section~\ref{S:curves}, we provide an
algorithm that efficiently finds genus-$2$ curves of order 
$N\not\equiv 1\mod 6$ in the following sense.

\begin{theorem}
\label{Th:C}
There exists an algorithm that, on input of an integer $N\not\equiv 1\mod 6$
together with its factorization, tries to return a prime number $p$ and a 
genus-$2$ curve $C/\FF_p$ of order $\#C(\FF_p)=N$.  If there exists an ordinary
elliptic curve of order $N$ over a prime field $\FF_p$ such that 
$p\equiv N-1\bmod\ell$ for $\ell = 2$ or $\ell = 3$, then the algorithm will be
successful. 

Under standard heuristic assumptions, the required elliptic curve exists for 
all $N\in\ZZ_{>1}$, and the expected run time of the algorithm is polynomial in
$2^{\omega(N)}\log N$.  Here $\omega(N)$ denotes the number of distinct prime
factors of $N$.
\end{theorem}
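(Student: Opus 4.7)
The plan is to build the genus-$2$ curve $C$ by gluing two elliptic curves along compatible $\ell$-torsion structure, where $\ell\in\{2,3\}$ is chosen so that $\ell\nmid N-1$. At least one such $\ell$ exists because $N\not\equiv 1\pmod 6$. The key point is that if $C/\FF_p$ admits an $\FF_p$-rational degree-$\ell$ map to an elliptic curve $E$, then $\Jac C$ is isogenous over $\FF_p$ to $E\times E'$ for some complementary elliptic curve $E'$, and the Frobenius trace identity reads
\[
\#C(\FF_p)\;=\;p+1-a_E-a_{E'}.
\]
If we arrange that $\#E(\FF_p)=N$, then $a_E=p+1-N$, and the requirement $\#C(\FF_p)=N$ forces $a_{E'}=0$; that is, $E'$ must be supersingular with $\#E'(\FF_p)=p+1$.

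Given this observation, the algorithm has three stages. First, invoke the Br\"oker--Stevenhagen algorithm \cite{BS}, constrained to the arithmetic progression $p\equiv N-1\pmod\ell$, to produce a prime $p$ together with an ordinary elliptic curve $E/\FF_p$ of order $N$. The congruence is tailored so that $a_E=p+1-N\equiv 0\pmod\ell$, which is precisely the divisibility condition that makes the appropriate $\ell$-level structure on $E$ available over $\FF_p$. Second, write down an explicit supersingular elliptic curve $E'/\FF_p$; such a curve exists for every prime $p\geq 5$ and can be produced in time polynomial in $\log p$, for instance by reducing one of finitely many CM $j$-invariants. Third, apply the parametrization of the appendix to the pair $(E,E')$ when $\ell=3$, or the classical Richelot-style $(2,2)$-gluing when $\ell=2$, to exhibit a genus-$2$ curve $C/\FF_p$ whose Jacobian is isogenous to $E\times E'$. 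The trace identity above then yields $\#C(\FF_p)=N$, and one verifies this directly as a sanity check, discarding the degenerate case in which the resulting curve is geometrically reducible or has a non-elliptic factor.

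The main obstacle is stage one: one has to check that imposing the extra congruence $p\equiv N-1\pmod\ell$ on the output of \cite{BS} preserves both the heuristic existence guarantee and the complexity bound polynomial in $2^{\omega(N)}\log N$. This should follow from inspecting the construction in \cite{BS}, which realizes $4p$ as a principal norm from a suitable imaginary quadratic order indexed by sign choices at the $\omega(N)$ prime factors of $N$; the new congruence cuts down the candidate set by at most a factor of $\ell\in\{2,3\}$ and restricts the admissible class of the Frobenius modulo $\ell$, a restriction that is mild relative to the heuristics already in force in \cite{BS}. The remaining analysis — that the appendix's explicit formulas can be evaluated in $\FF_p$ in polynomial time, that the output is generically a smooth irreducible genus-$2$ curve, and that the one-parameter family in the appendix contains a member giving the required supersingular Prym — amounts to direct computation with the parametric formulas and a count of how often the parametrized $E'$ meets the supersingular locus. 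A uniform analysis of the $\ell=2$ (Richelot) and $\ell=3$ (appendix) cases, together with the heuristic input on primes in progressions, then yields the claimed run time.
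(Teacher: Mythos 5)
Your overall strategy matches the paper's: choose $\ell\in\{2,3\}$ with $\ell\nmid N-1$, use a constrained Br\"oker--Stevenhagen run to get an ordinary $E/\FF_p$ of order $N$ with $p\equiv N-1\bmod\ell$, pair it with a supersingular $E'$, and glue along $\ell$-torsion so the trace identity $\#C(\FF_p)=p+1-a_E-a_{E'}=N$ drops out. The trace computation is correct, and your observations about the congruence constraint and about producing a supersingular curve in polynomial time are sound.

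However, there is a real gap at the gluing step, which you dispose of with ``direct computation'' and ``discarding the degenerate case.'' For the gluing to produce anything at all, the two elliptic curves must have isomorphic $\ell$-torsion group schemes over $\FF_p$ (for $\ell=3$, an isomorphism that is an anti-isometry for the Weil pairing), and the curves must not be geometrically $2$-isogenous so that the glued object is a nonsingular genus-$2$ curve rather than a product of two genus-$1$ curves (this is Kani's criterion). Your proposal never establishes this compatibility. The raw output $E$ of the Br\"oker--Stevenhagen stage will typically \emph{not} have the required $\ell$-torsion structure: the Frobenius may act as a scalar on $E[\ell]$ (equivalently, $\ZZ[\pi]$ may have index divisible by $\ell$ in $\End E$), in which case no compatible pairing with a supersingular curve exists. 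The paper addresses this by first descending the $\ell$-isogeny volcano to replace $E$ by an isogenous curve that is \emph{minimal at $\ell$}, meaning $\ell\nmid[\End E_1:\ZZ[\pi]]$; for $\ell=2$ it also replaces $E'$ by a $2$-isogenous curve when necessary; and for $\ell=3$, even after both curves are minimal at $3$, an anti-isometry $E_1[3]\to E_2[3]$ need not exist, so the paper invokes a lemma of Howe--Nart--Ritzenthaler guaranteeing that after possibly replacing $E_2$ by a $2$-isogenous curve $E_2'$, the required anti-isometry \emph{does} exist. Without the volcano descent and without this dichotomy argument, you have not shown the gluing stage terminates with output, so the claim ``then the algorithm will be successful'' is unproved. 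The observation that ordinary and supersingular curves are never geometrically isogenous (hence condition (b) of Kani's criterion is automatic) is another small but necessary ingredient you would need to make explicit rather than treating it as a degenerate case to discard.
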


\noindent
Although the run time in Theorem \ref{Th:C} is not polynomial in the usual
sense, it is polynomial in $\log N$ outside a zero-density subset of
$\ZZ_{\geq 1}$ consisting of very smooth input values~$N$.

The hypothesis on the existence of an \emph{elliptic} curve of order $N$ in 
Theorem~\ref{Th:C} is caused by the fact that we construct the curve $C$ in the
theorem as a genus-$2$ curve with split Jacobian $J\sim E_1\times E_2$, and
this requires the construction of auxiliary elliptic curves $E_1$ and~$E_2$ of 
given orders.  Such elliptic curves can be constructed by the method 
of~\cite{BS} discussed in Section \ref{S:genus1}.  The Jacobian $J$ of $C$ is 
then obtained by \emph{gluing} $E_1$ and~$E_2$ along their $n$-torsion for some
integer $n>1$.  In this case, the genus-2 curve $C$ has the special property 
that it allows nonconstant maps to the elliptic curves $E_1$ and~$E_2$.  For
$n\le4$ this is a classical topic, at least when performed over the complex 
numbers.  It was already used in the $19$th century by Jacobi~\cite{Jacobi},
Goursat~\cite{Goursat}, and others to express hyperelliptic integrals in terms
of elliptic integrals.

In Section \ref{S:gluing} we give an algorithmic description of the gluing
results for $n=2$ and $n=3$ that keeps track of fields of definition.  For
$n=3$ we actually do a bit more, in an appendix to the paper:  We provide a 
complete parameterization of the genus-$2$ curves that admit rational 
degree-$3$ maps to an elliptic curve, including formulas for the genus-$2$
curves, the associated elliptic curves, and the degree-$3$ maps.  The explicit
gluing algorithms are used in the proof of Theorem~\ref{Th:C} in 
Section~\ref{S:curves}.  One result of the restriction to $n\in\{2, 3\}$ is the
congruence condition $N\not\equiv 1\mod 6$ in Theorem~\ref{Th:C}.  To handle 
arbitrary $N$ by our method, one would need to use explicit algorithms for 
gluing two elliptic curves along their $\ell$-torsion for arbitrary
primes~$\ell$, because only primes $\ell$ coprime to $N-1$ can be used.

In the final Section \ref{S:examples}, we illustrate the explicit working of 
our algorithm and construct two genus-2 curves of smooth order $10^{2013}$ and 
one of prime order $10^{2014}+9703$.  The construction of the prime order curve
required finding a root, in a large finite field, of a class polynomial for an 
imaginary quadratic order of large discriminant. We thank Andrew Sutherland for
his generous help in carrying out this calculation for us, using the methods
of~\cite{Sutherland}.

In this paper, we view all varieties as being schemes over a given base field.
It follows that morphisms of varieties are morphisms 
\emph{over that base field}.  For example, what we call the 
\emph{endomorphism ring} of an abelian surface over a field~$k$, other authors 
might call the ring of \emph{$k$-rational} endomorphisms.

\section{Elliptic curves of given order}
\label{S:genus1}

\noindent
We start with a review of the elliptic case.  Even though Theorem~\ref{Th:J}
expresses the fact that this case is rather different from the genus-$2$ case,
the elliptic case is used in an essential way in Section~\ref{S:curves}, in the 
proof of Theorem~\ref{Th:C}.

For an elliptic curve $E$ defined over a finite field $\FF_q$ of $q$ elements,
the order $N=\#E(\FF_q)$ is an integer in the Hasse interval 
\begin{equation}
\label{eq:Hasseqg=1}
\calH_q=[(\sqrt q-1)^2, (\sqrt q +1)^2] = [q+1-2\sqrt q,\  q+1+2\sqrt q]
\end{equation}
of length $4\sqrt q$ centered around $q+1$. Note that $N$ and $q$ are of the 
same size, and that we have a symmetric relation
\begin{equation}
\label{eq:Nqsymmgenus1}
N\in\calH_q \quad \Longleftrightarrow\quad q\in\calH_N.
\end{equation}
The integers contained in the union of the intervals $\calH_q$ for those 
fields $\FF_q$ that are \emph{not} prime fields form a zero-density subset of
$\ZZ_{>0}$, so any algorithm realizing elliptic curves of arbitrary prescribed 
order $N$ can safely restrict to prime fields $\FF_p$.  It is well-known that 
every integer $N\in\calH_p$ is realized by an elliptic curve over $\FF_p$, but 
unfortunately it is unproved that the union 
$\bigcup_{\textup{$p$ prime}} \calH_p$ of the Hasse intervals for prime fields 
contains all positive integers.  The problem here is that it is unknown whether
we have an upper bound
\begin{equation}
\label{eq:gapbound}
d_n = p_{n+1}- p_n < 4 \sqrt {p_n}
\end{equation}
for the prime gap $d_n$ following the $n$-th prime~$p_n$. This is the bound
that makes $\calH_{p_n}$ and $\calH_{p_{n+1}}$ overlap, and that would prevent
integers from being outside the Hasse intervals $\calH_p$ for all primes~$p$.
Currently, the best proven upper bound \cite{BHP} is $d_n< p_n^{.525}$, which 
is not good enough for our purposes.

It is possible to prove that only a very thin set of integers $N$ lies outside
all Hasse intervals. By a result of Matom{\"a}ki (see Lemma~\ref{L:Matomaki}),
the total length $\sum_n d_n$ of prime gaps $d_n > \sqrt {p_n}$ for the primes 
$p_n<X$ is no more than $O(X^{2/3})$, and this yields an upper bound for the 
number of integers up to $X$ that are \emph{not} the order of the group of 
points of an elliptic curve over a finite field.

Even though \eqref{eq:gapbound} is unproved, we know by the prime number 
theorem that, \emph{on average}, we have $d_n\approx \log p_n$. This means that
finding prime fields over which $N$ can be realized as the order of an elliptic
curve is never a practical problem.  As the expected number of possible $p$ for
a large value~$N$ is expected to be about $4\sqrt N/\log N$, there is ample 
choice in practice.

The key problem arising in the elliptic case is that, given an integer 
$N\in\calH_p$, the best general algorithm we know to construct an elliptic
curve over $\FF_p$ of order $N$ is the rather na\"{\i}ve method of picking
random elliptic curves over $\FF_p$ and checking whether their order 
equals~$N$, until a curve of order $N$ is found.  As checking the order (and 
even complete point counting) for elliptic curves over $\FF_p$ can be done in 
time polynomial in $\log p\approx \log N$, the run time for this na\"{\i}ve
probabilistic algorithm is essentially determined by the number of elliptic 
curves one has to try before one of order $N$ is encountered.  This expected
number is of order $\sqrt N$, and the resulting run time 
$O(N^{\frac{1}{2}+\eps})$ for any $\eps>0$ is exponential in $\log N$, the size
of the input value~$N$. This means that we do not obtain an efficient algorithm
to solve Problem~A from the Introduction in the case of elliptic curves.

The solution provided in \cite{BS} to construct elliptic curves of prescribed 
order over a given finite field uses a deterministic complex multiplication
approach, which has an even slower run time $O(N^{1+\eps})$ for \emph{most} 
values of $N$ and $p$.  However, it runs in polynomial time in cases where the 
curve to be constructed has a `small' endomorphism ring.  Heuristically,
suitable small endomorphism rings can always be found in the less restrictive 
setting of Problem~B, where, on input $N$, one is free to choose a prime field
$\FF=\FF_p$.

To make this more precise, we recall that for an elliptic curve $E$ 
over~$\FF_p$, the Frobenius endomorphism $\Phi_p \in\End E$ satisfies a 
quadratic equation
\[
\Phi_p^2 -t \Phi_p + p = 0\in\End E
\]
of discriminant $\Delta=t^2-4p<0$.  The associated \emph{Weil $p$-polynomial} 
\begin{equation}
\label{eq:Weilpolg=1}
f=T^2-tT+p\in\ZZ[T],
\end{equation}
which may be viewed as the characteristic polynomial of $\Phi_p$ acting on the
Tate module $T_\ell(E)$ of $E$ at a prime $\ell\ne p$, characterizes the
isogeny class of $E$, and the elliptic curves in this isogeny class are those
elliptic curves over $\FF_p$ that have order $f(1)=p+1-t$.

To construct, for a given Weil $p$-polynomial $f=T^2-tT+p$ of 
discriminant~$\Delta$, an elliptic curve $E$ in the corresponding isogeny 
class, one can use the \emph{complex multiplication method}, which realizes 
$E$ as the reduction modulo $p$ of an elliptic curve in characteristic zero.
More precisely, there are only finitely many isomorphism classes of 
\emph{complex} elliptic curves with endomorphism ring isomorphic to the 
imaginary quadratic order $\calO_\Delta=\ZZ[T]/(f)$.  Complex analytically,
these curves arise as quotients $\CC/I$ for invertible ideals
$I\subseteq \calO_\Delta\subset\CC$.  Their $j$-invariants depend only on the
class of the ideal $I$ in the class group $\Cl \calO_\Delta$, and they are 
\emph{algebraic integers} that form the zeroes of the Hilbert class polynomial
\begin{equation}
\label{eq:Hilbpol}
P_\Delta=\prod_{[I]\in\Cl \calO_\Delta} (T-j(I))\in\ZZ[T].
\end{equation}
The polynomial $P_\Delta$ splits into distinct linear factors in $\FF_p[T]$, 
and its roots in $\FF_p$ are the $j$-invariants of the elliptic curves over
$\FF_p$ that have Weil polynomial $f$.  It is trivial to write down an explicit 
model $E/\FF_p$, say in Weierstrass form, with given $j$-invariant 
$j(E)\in\FF_p$. The $j$-invariant $j(E)$ determines $E$ up to twists over
$\FF_p$ and, for $\Delta<-4$, the Weil polynomial $f=T^2\pm tT+p$ of $E$ up to 
the sign of~$t$. As it is easy to check which of the twists of $E$ has the 
desired order~$N$, finding $j(E)\in \FF_p$ solves our problem.

The polynomial $P_\Delta$ can be used to write down an elliptic curve over 
$\FF_p$ of order~$N$ if there exist elements $\nu$ and $\pi$ in the imaginary 
quadratic order $\calO_\Delta$ satisfying
\begin{equation}
\label{EQ:nupi}
\nu\nubar=N,\qquad
\nu+\pi=1,\qquad
\pi\pibar=p \textup{ (prime)}.
\end{equation}
Note that, despite the symmetry in $N$ and $p$, this is just a way to phrase
the fact that $N$ is the norm $\Norm(1-\pi)$ for a Weil $p$-number 
$\pi\in \calO_\Delta$.  As the degree of~$P_\Delta$, and heuristically also the
size of the coefficients of~$P_\Delta$, are of order of 
magnitude~$|\Delta|^{1/2}$, the time needed to compute $P_\Delta$ is
exponential in $\log|\Delta|$.  One therefore looks for the \emph{minimal}
imaginary quadratic order $\calO_\Delta$ in which there exist elements $\nu$ 
and~$\pi$ satisfying Equation~\eqref{EQ:nupi}. This order can in principle be
found by factoring $N$ in all possible ways as $N=\nu\nubar$ in $\calO_\Delta$
for ascending values of $|\Delta|$, until an element $\nu$ is found for which 
$\pi=1-\nu$ has prime norm $p$. It is explained in \cite{BS} how this can be
done efficiently in case the prime factorization of $N$ in $\ZZ$ is known and 
why, on input $N$, the expected minimal value of $|\Delta|$ for which $\pi$ is
found is heuristically of size $O((\log N)^2+2^{\omega(N)})$.  Here $\omega(N)$
denotes the number of distinct prime factors of~$N$. For our purposes, it 
suffices to know that the CM-construction of elliptic curves we sketched yields
the following result.

\begin{lemma}
\label{L:BS}
The CM-construction produces an elliptic curve $E$ over a prime field $\FF_p$
that solves Problem~B from the Introduction for factored input values $N$ in a
time that is heuristically polynomial in $2^{\omega(N)}\log N$.

For a fixed prime $\ell$, the same holds true under the additional restriction
that the prime $p$ be congruent to $N-1$ modulo $\ell$ and that the elliptic
curve $E$ be ordinary, provided that $N-1$ is not divisible by $\ell$.
\end{lemma}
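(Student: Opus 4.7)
The plan is to combine the discriminant-search heuristic of~\cite{BS} with a cost analysis of the CM-algorithm sketched above. First I would state the algorithm explicitly: on input of a factored $N$, loop over imaginary quadratic discriminants $\Delta<0$ in order of increasing $|\Delta|$; for each such $\Delta$, enumerate all $\nu\in\calO_\Delta$ with $\nu\bar\nu=N$; for each candidate $\nu$, set $\pi=1-\nu$ and $p=\pi\bar\pi$, and test $p$ for primality. Once a prime $p$ appears, I would compute the Hilbert class polynomial $P_\Delta$, reduce it modulo~$p$, find a root $j_0\in\FF_p$, write down a Weierstrass model with $j$-invariant $j_0$, and select among its twists one whose order over $\FF_p$ equals~$N$.

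Next I would bound each step in terms of $\log N$, $2^{\omega(N)}$ and $|\Delta|$. Given the factorization of $N$ in $\ZZ$, the splitting of each prime divisor in $\calO_\Delta$ is governed by a Kronecker symbol, and the $\nu\in\calO_\Delta$ of norm $N$ can be read off from the corresponding ideal factorizations; this enumeration runs in time $\tilde O(2^{\omega(N)})$. Primality testing, modular root extraction, and twist selection are polynomial in $\log p = O(\log N)$. The polynomial $P_\Delta$ has degree and coefficient size $\tilde O(|\Delta|^{1/2})$ and is computable in time $\tilde O(|\Delta|)$ by the methods of~\cite{Sutherland}. Hence the total cost is polynomial in $|\Delta|$, $2^{\omega(N)}$ and $\log N$, where $|\Delta|$ is the smallest admissible discriminant.

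The decisive step is then the heuristic bound of~\cite{BS} on this smallest $|\Delta|$: modelling the candidate integers $p=\Norm(1-\nu)$ of size $\Theta(|\Delta|)$ as random and applying a Bateman--Horn style prime heuristic, one finds that the expected minimum is $O((\log N)^2+2^{\omega(N)})$. The $(\log N)^2$ summand reflects the discriminant needed so that $\calO_\Delta$ accommodates an element of norm $N$, while the $2^{\omega(N)}$ summand reflects how many candidates $\nu$ each such order produces on average, one of which should give a prime norm for $\pi=1-\nu$. Feeding this into the previous paragraph yields a total run time polynomial in $2^{\omega(N)}\log N$.

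For the second assertion I would rerun the same analysis with the extra restriction $p\equiv N-1\mod\ell$. Writing $N=p+1-t$ this amounts to the linear condition $t\equiv 0\mod\ell$ on $\nu+\bar\nu\in\ZZ$, and the hypothesis $\ell\nmid N-1$ guarantees $p\not\equiv 0\mod\ell$, so by Dirichlet the relevant residue class of primes has positive density $\Theta(1/\ell)$. The density of useful candidates therefore drops only by a constant factor depending on~$\ell$. Ordinariness excludes only the single trace $t=0$ and is automatic for $N\ne p+1$. Consequently the expected minimal $|\Delta|$ is inflated by at most an $\ell$-dependent constant, and the complexity bound survives. The principal obstacle throughout is analytic rather than algorithmic: replacing the true distribution of primes of the form $\Norm(1-\nu)$ by a Bateman--Horn prediction is not known to follow from any unconditional result, which is precisely why the lemma is stated under standard heuristic assumptions.
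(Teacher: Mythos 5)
Your proposal is correct and follows essentially the same route as the paper: the first assertion is just the Br\"oker--Stevenhagen result (the paper cites \cite{BS}*{Corollary~4.4} directly where you re-derive the cost analysis), and the second assertion is handled by exactly the paper's argument that the congruence $p\equiv N-1\bmod\ell$ carves out one of the $\ell-1$ invertible residue classes mod~$\ell$, adding only an $\ell$-dependent constant factor, with ordinariness excluding just the single value $p=N-1$. One small imprecision: you phrase the congruence as ``$t\equiv 0\bmod\ell$ on $\nu+\bar\nu$,'' but $t$ (the trace of Frobenius of $\pi=1-\nu$) and $\nu+\bar\nu$ differ by~$2$; the linear condition is on $\nu+\bar\nu\equiv 2\bmod\ell$, which is immaterial to the heuristic density count.
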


\begin{proof}
This first statement is \cite{BS}*{Corollary~4.4}.  The proof given there also
explicitly formulates the heuristic assumption underlying the analysis in the
following way:  The elements $\nu$ behaving like \emph{random} quadratic
integers of norm $N$, in the sense that the norm $N+1-(\nu+\nubar)$ of 
$\pi=1-\nu$, which is an integer of the same order of magnitude as~$N$, will be
prime with frequency $1/\log N$. This random behavior of~$\nu$ will also be
reflected in the trace $t=\nu+\nubar$ of $\nu$ taken modulo~$\ell$, provided 
that we keep in mind that the residue class $(N-1+t\mod \ell)$ in which we find
our prime $p=N-1+t$ has to be invertible modulo $\ell$.  Thus, we expect that 
$p\equiv N-1 \mod \ell$ with probability $1/(\ell-1)$, provided that $N-1$ is 
not divisible by~$\ell$.  For fixed $\ell$, this simply adds a constant factor 
to the expected running time.

For $p>3$, the added restriction that $E$ be ordinary is equivalent to 
demanding that $p\ne N-1$.  Excluding this single value of $p$ does not change
the expected running time of the algorithm. 
\end{proof}

\section{Genus-2 curves and Jacobians}
\label{S:genus2}

\noindent
Let $q$ be a power of a prime.  A polynomial $f\in \ZZ[T]$ is called a 
\emph{Weil $q$-polynomial} if there is an abelian variety $A$ over $\FF_q$ such
that $f$ is the characteristic polynomial of the Frobenius endomorphism 
$\Phi_q \in\End A$ acting on the Tate module $T_\ell(A)$ for some prime 
$\ell\nmid q$. As the complex roots of a Weil $q$-polynomial have absolute 
value~$\sqrt q$ and come in $g$ complex conjugate pairs, with $g$ the dimension
of $A$, the quartic Weil $q$-polynomials arising in genus 2 have the form 
\begin{align}
\label{eq:Weilpolg=2}
 f &= T^4 - a T^3 + (b + 2q) T^2 - a q T + q^2 \\
\notag
   &= (T^2 + q)^2 - a T (T^2 + q) + b T^2,
\end{align}
with $a, b\in\ZZ$ satisfying the inequalities
\begin{equation}
\label{eq:wedge-ineq}
2|a|\sqrt{q} - 4q \le b 
                  \le \frac{1}{4} a^2
                  \le 4q.
\end{equation}
These inequalities define a wedge-shaped region inside the rectangle in the
$(a,b)$-plane defined by $|a|\le 4\sqrt q$ and $|b|\le 4q$, and it is natural
to ask which pairs $(a,b)$ of integers satisfying the inequalities 
\eqref{eq:wedge-ineq} come from the Weil $q$-polynomial of an abelian surface,
or from the Weil $q$-polynomial of the Jacobian of a genus-2 curve.  The 
Honda--Tate theorem~\cite{Tate}*{Th\'eor\`eme~1} can be used to determine 
the pairs $(a,b)$ that come from abelian surfaces, and \cite{HNR}*{Theorem~1.2}
explains how to determine which $(a,b)$ come from Jacobians of curves.  For our 
purposes, it will be sufficient to note that all pairs of integers $(a, b)$ 
satisfying the inequalities \eqref{eq:wedge-ineq} \emph{and} the coprimality
condition $\gcd(b,q)=1$ arise from the coefficients of the Weil $q$-polynomial 
of an abelian surface over $\FF_q$ --- in fact, an ordinary abelian surface.

Let $C$ be a curve of genus 2 defined over $\FF_q$, and let $J=\Jac C$ be its
Jacobian, so that $J$ is an abelian surface defined over~$\FF_q$. Let $f$ be 
the Weil polynomial of~$J$, with coefficients given as 
in~\eqref{eq:Weilpolg=2}.  The pair $(\#C(\FF_q), \#J(\FF_q))$ of orders over 
$\FF_q$ is determined by~$f$, and conversely. In concrete terms, we have
\begin{align}
\label{eq:curveorder}
\#C(\FF_q)&=q+1-a    \\
\label{eq:jacorder}
\#J(\FF_q)&=f(1)=(q+1)^2-a(q+1)+b. 
\end{align}
It follows that the order $\#J(\FF_q)$ lies in the genus-2 Hasse-interval
\[
\calH_q^{(2)}=[(\sqrt q-1)^4, (\sqrt q+1)^4]
\]
forming the analogue of \eqref{eq:Hasseqg=1}.  The interval $\calH_q^{(2)}$
has length $8\sqrt q (q+1)$ and is centered around $q^2+6q+1$. We have an 
equivalence
\[
N\in \calH_q^{(2)} \quad\Longleftrightarrow\quad
q\in \calH_{\sqrt N}=[(N^{1/4}-1)^2, (N^{1/4}+1)^2]
\]
that is not as symmetric as \eqref{eq:Nqsymmgenus1} in $N$ and $q$.  This is
because the order $N$ of an abelian surface over $\FF_q$ has order of 
magnitude~$q^2$, not~$q$. 

Just as in the elliptic case, the union of the integers in the genus-$2$ Hasse
intervals $\calH_q^{(2)}$ for the fields $\FF_q$ that are \emph{not} prime 
fields forms a zero density subset of all positive integers.  Our inability to
prove prime gap bounds as in \eqref{eq:gapbound} prevents us also in this case
from showing that every positive integer arises as the order of an abelian 
surface over a finite field. However, we can prove with some extra effort that,
just as in the elliptic case, the (conjecturally empty) set of integers $N$ 
that do \emph{not} arise as the order of an abelian surface forms a very thin 
subset of all positive integers.

\begin{theorem}
\label{thm:zerodensity}
The set of positive integers $N\le X$ that do not occur as the order of an 
abelian surface over a finite field has cardinality $O(X^{5/6})$ for 
$X\to\infty$.
\end{theorem}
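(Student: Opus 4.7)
The plan is to restrict attention to prime-field realizations and use the sufficient condition stated after \eqref{eq:wedge-ineq}: each integer pair $(a,b)$ in the wedge $W_p \subset \ZZ^2$ of solutions to \eqref{eq:wedge-ineq} satisfying $\gcd(b,p)=1$ gives the Weil coefficients of an ordinary abelian surface over $\FF_p$ of order $\phi_p(a,b) := (p+1)^2 - a(p+1) + b$. The central object is the \emph{middle} subinterval of the Hasse interval $\calH_p^{(2)}$,
\[
\mathcal{M}_p := \bigl[(p-1)(\sqrt p -1)^2,\, (p-1)(\sqrt p + 1)^2\bigr],
\]
which has the property that every integer in $\mathcal{M}_p$ admits at least one integer preimage in $W_p$ under $\phi_p$. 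This follows by slicing $W_p$ along the vertical lines $a=k \in \ZZ$ and using the explicit wedge boundaries $b = a^2/4$ and $b = 2|a|\sqrt p - 4p$ to verify that for $p$ sufficiently large the $b$-ranges of consecutive slices produce integer intervals of $\phi_p$-values overlapping throughout $\mathcal{M}_p$, so their union covers $\mathcal{M}_p \cap \ZZ$.

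To bound $\bigl|[1,X]\setminus\bigcup_p \mathcal{M}_p\bigr|$, I estimate the total length of gaps between consecutive middles. For consecutive primes $p_n<p_{n+1}$ with $d_n = p_{n+1}-p_n$, expansion gives
\[
(p_{n+1}-1)(\sqrt{p_{n+1}}-1)^2 - (p_n-1)(\sqrt{p_n}+1)^2 = 2p_n d_n - 4p_n^{3/2} + O(p_n + d_n^2),
\]
which is positive only when $d_n > (2+o(1))\sqrt{p_n}$, and in that regime is bounded by $3 p_n d_n$ for $p_n$ sufficiently large. Only primes $p_n \le \sqrt X + O(X^{1/4})$ produce gaps inside $[1,X]$; pulling out $p_n \le \sqrt X$ and applying Matom\"aki's lemma (Lemma~\ref{L:Matomaki}) with $Y = \sqrt X$ yields
\[
\Bigl|[1,X]\setminus\bigcup_p \mathcal{M}_p\Bigr| \,\le\, 3\sqrt X \sum_{\substack{p_n \le \sqrt X \\ d_n > \sqrt{p_n}}} d_n \,=\, O\bigl(\sqrt X \cdot X^{1/3}\bigr) \,=\, O(X^{5/6}).
\]

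For the remaining $N \in \bigcup_p \mathcal{M}_p$ unrealized as the order of an abelian surface, pick $p$ with $N \in \mathcal{M}_p$: then $N$ has at least one integer preimage in $W_p$, but by assumption no preimage satisfies $\gcd(b,p)=1$. Since the integer preimages of $N$ under $\phi_p$ form an arithmetic progression $(a_0+k,\,b_0+k(p+1))_{k \in \ZZ}$ whose $b$-coordinates are distinct modulo $p$, at most one preimage has $p\mid b$, so $N$ has a unique preimage lying in $W_p \cap \{p\mid b\}$. The set $W_p \cap \{p\mid b\}$ has size $O(\sqrt p)$ -- at most nine multiples of $p$ in $[-4p,4p]$, each admitting $O(\sqrt p)$ wedge-admissible values of $a$ -- so summing,
\[
\sum_{p \le \sqrt X + O(X^{1/4})} O(\sqrt p) \,=\, O\bigl(X^{3/4}/\log X\bigr),
\]
which is absorbed into $O(X^{5/6})$.

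I expect the main obstacle to lie in the rigorous verification of the middle claim $\mathcal{M}_p \cap \ZZ \subseteq \phi_p(W_p \cap \ZZ^2)$: this is a delicate lattice-point check on consecutive vertical slices of the wedge, using the explicit wedge formulas to show that adjacent slices' integer $b$-ranges overlap throughout $\mathcal{M}_p$. The Matom\"aki estimate and the count of $W_p \cap \{p\mid b\}$ are essentially bookkeeping once the middle claim is in place.
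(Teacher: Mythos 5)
Your overall strategy is parallel to the paper's: both arguments isolate a ``middle'' subinterval of the genus-$2$ Hasse interval in which realizability can be guaranteed, then bound the total length of the gaps between consecutive middles via the Matom\"aki estimate. The gap analysis (step 3) and the coprimality counting (step 4) are sound as sketched. However, there is a genuine gap exactly where you flag it: the middle claim $\mathcal{M}_p\cap\ZZ\subseteq\phi_p(W_p\cap\ZZ^2)$ is asserted but not proved, and it is not merely bookkeeping. A continuous computation shows that the vertical slices $a=k$ of the wedge overlap (i.e.\ have length exceeding the $\phi_p$-shift $p+1$) only for $|k|\le 2\sqrt p+O(1)$, and the union of these overlapping slices reaches down to $(p-1)(\sqrt p-1)^2$ \emph{exactly} --- that is, to the very endpoint of your $\mathcal{M}_p$. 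So your interval is chosen with zero safety margin, and once you pass to integer lattice points the boundary behavior (rounding the interval $[2|a|\sqrt p - 4p,\,a^2/4]$ to its integer sub-interval, which can lose up to one unit of length per slice) can break the overlap at the edges. You would need either to shrink $\mathcal{M}_p$ slightly or to carry out an explicit check of the extremal slices.

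The paper avoids all of this by choosing the noticeably tighter interval $\calC_q^{(2)}=[(q+1)^2-q^{3/2},\,(q+1)^2+q^{3/2}]$, for which the relevant $a$-values stay in $|a|\le\sqrt q+O(1)$, well inside the safe range. Moreover, the paper's Lemma~\ref{lem:interval} handles the wedge inequalities \emph{and} the coprimality condition $\gcd(b,q)=1$ simultaneously: it writes $N=(q+1)^2+m$, sets $a_0=-\lfloor m/(q+1)\rfloor$ and $b_i=m+a_i(q+1)$ for $a_i=a_0-i$ ($i=0,1,2$), and shows by direct case analysis that for $q>7$ at least one of these three consecutive choices lies in the wedge with $\gcd(b_i,q)=1$. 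The key observation is that if $b_1=-q$ is the coprimality obstruction, then $b_0=1$ and $b_2=-2q-1$ are automatically coprime to $q$, and at least one of $(a_0,b_0)$, $(a_2,b_2)$ is in the wedge. Your two-step decomposition (first existence in $W_p$, then bound the set $W_p\cap\{p\mid b\}$) is a genuinely different and rather clean way to dispose of the coprimality issue at the cost of a separate $O(X^{3/4})$ term; but it only pays off once you have a fully verified middle claim, and that verification (which you correctly identify as delicate) is the heart of the theorem and is missing here.
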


The proof relies on a lemma about the central part
\[
\calC_q^{(2)}= [(q+1)^2-q^{3/2}, (q+1)^2+q^{3/2}] 
\]
of the genus-$2$ Hasse interval $\calH_q^{(2)}$.

\begin{lemma}
\label{lem:interval}
If $q$ is prime, then every integer in $\calC_q^{(2)}$ is of the form $f(1)$
for the Weil $q$-polynomial $f$ of some abelian surface over $\FF_q$.
\end{lemma}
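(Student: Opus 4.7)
The plan is to reduce the lemma to finding a pair of integers $(a,b)$ lying in the wedge defined by~\eqref{eq:wedge-ineq} with $(q+1)^2-a(q+1)+b=N$, and then to appeal to the result already cited in the text that pairs with $\gcd(b,q)=1$ give rise to an ordinary abelian surface over $\FF_q$ of the desired order.  Setting $M=(q+1)^2-N$, the hypothesis gives $|M|\le q^{3/2}$, and I must find $a$ so that $b=a(q+1)-M$ lies in the wedge.

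My first step would be to try $a_0=\lfloor M/(q+1)\rfloor$, which forces $b_0\in[-q,0]$.  A short calculation shows $|a_0|\le q^{3/2}/(q+1)+1$, which is bounded above by $3\sqrt{q}/2$ for every $q\ge 1$ (after the substitution $u=\sqrt{q}$ the inequality reduces to $(u-1)(u^2-u+2)\ge 0$).  This bound simultaneously verifies $|a_0|\le 4\sqrt{q}$, the automatic upper wedge bound $b_0\le 0\le a_0^2/4$, and the lower wedge bound $b_0\ge -q\ge 2|a_0|\sqrt{q}-4q$.  So $(a_0,b_0)$ always lies in the wedge.  If moreover $\gcd(b_0,q)=1$, the proof is complete.

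If instead $\gcd(b_0,q)\ne1$, then $b_0\in\{0,-q\}$, and I would perturb to $a_0\pm 1$.  Since a unit shift in $a$ changes $b$ by $q+1\equiv 1\pmod q$, the perturbed $b$ is coprime to $q$, and a routine computation shows that the perturbed pair remains in the wedge once $q$ is sufficiently large (roughly $q\ge 19$, coming from the condition $4\sqrt{q}(q+1)\le q^2+2q-1$ needed to keep $b=-(q+1)$ above the lower wedge boundary).

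The main obstacle will be the handful of small primes for which perturbation might carry us outside the wedge.  For these I would verify the lemma by direct enumeration of $\calC_q^{(2)}$.  In the residual cases where no wedge pair with $\gcd(b,q)=1$ exists---for instance $q=2$, $N=10$ forces $(a,b)=(-1,-2)$, and $q=3$, $N=17$ forces $(a,b)=(-1,-3)$---the full Honda--Tate classification still applies:  the wedge-valid Weil polynomial is either irreducible, yielding a simple abelian surface by Honda--Tate, or it factors as a product of two quadratic Weil polynomials, each of which corresponds to an elliptic curve over $\FF_q$ by Waterhouse's theorem, giving the required abelian surface as a product.
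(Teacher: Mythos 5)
Your proposal follows the same overall strategy as the paper: reduce the lemma to finding a pair $(a,b)$ in the wedge~\eqref{eq:wedge-ineq} with $\gcd(b,q)=1$, obtain a base pair $(a_0,b_0)$ by Euclidean division, perturb $a$ by small amounts if $b_0$ fails the coprimality test, and treat the remaining small primes by enumeration.  The sign conventions differ (you normalize $b_0\in[-q,0]$, the paper uses $b_0\in[0,q]$ and perturbs downward through $a_0-1$ and $a_0-2$), and the explicit thresholds differ somewhat, but the architecture is the same.  Your verification that $(a_0,b_0)$ always lies in the wedge is correct: the inequality $u^3-2u^2+3u-2=(u-1)(u^2-u+2)\ge0$ does give $|a_0|\le\frac{3}{2}\sqrt{q}$, and this is exactly what is needed for the lower boundary.

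The gap is in your treatment of the residual cases --- those $(q,N)$ for which \emph{no} wedge pair with $\gcd(b,q)=1$ exists.  You assert that ``the wedge-valid Weil polynomial is either irreducible, yielding a simple abelian surface by Honda--Tate, or it factors as a product of two quadratic Weil polynomials, each of which corresponds to an elliptic curve \dots,'' concluding that the abelian surface always exists.  The reducible branch is fine (for prime $q$, every integer $t$ with $|t|\le 2\sqrt{q}$ is the trace of an elliptic curve over $\FF_q$, and Gauss's lemma keeps the factors integral).  The irreducible branch is not:  Honda--Tate does \emph{not} say that every irreducible quartic $f$ satisfying \eqref{eq:Weilpolg=2} and \eqref{eq:wedge-ineq} is the Weil polynomial of a $2$-dimensional abelian variety.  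When $p\mid b$ the variety is non-ordinary, and the dimension of the simple variety attached to a root $\pi$ of $f$ is governed by the local invariants $f_\gothp\ord_\gothp(\pi)/\ord_\gothp(q)\bmod 1$; these may be non-trivial, in which case the simple variety has dimension $4$, not $2$, and $f$ itself is not the Weil polynomial of any abelian surface.  A concrete example: for $q=5$, the pair $(a,b)=(5,5)$ lies in the wedge and gives an irreducible $f$ whose root is $\pi=\zeta_5\sqrt{5}\in\QQ(\zeta_5)$, where $5$ is totally ramified; the invariant is $1/2$, so $f$ is not a Weil polynomial of an abelian surface.  (That $(a,b)$ gives $N=11\notin\calC_5^{(2)}$, so it does not actually appear in the enumeration, but it shows the logical step is invalid.)  What the paper does in the five exceptional cases $(2,10),(3,17),(3,21),(5,43),(7,73)$ is simply \emph{exhibit} an explicit Weil polynomial for each, leaving the (finite, checkable) Honda--Tate verification to the reader; your proof must likewise verify these cases individually rather than appeal to a blanket claim.

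A smaller point: your claim that perturbation to $a_0\pm1$ works ``once $q$ is sufficiently large'' needs more care, because the correct direction of perturbation depends on $b_0$ and $a_0$, and for $b_0=-q$ with $a_0\in\{-2,-1,0\}$ and small $q$, \emph{neither} direction lands in the wedge (the case $(q,N)=(7,73)$ is exactly such an instance, with $a_0=-2$, $b_0=-7$).  Since you already plan to enumerate small $q$, this is not fatal, but the stated threshold $q\ge 19$ and the implicit claim that one of $a_0\pm1$ always suffices would need to be replaced by an explicit case split matching the paper's.
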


\begin{proof}
Let $N$ be an integer in $\calC_q^{(2)}$, and write $N = (q+1)^2 + m$, so that 
$|m|\le q^{3/2}$.  We would like to find integers $a$ and $b$ satisfying the
inequalities~\eqref{eq:wedge-ineq} such that we also have $m = -a(q+1) + b$ and
$\gcd(b,q) = 1$; then the polynomial $f$ defined by 
Equation~\eqref{eq:Weilpolg=2} will be the Weil $q$-polynomial of an ordinary
abelian surface over $\FF_q$, and $N = f(1)$.

Define three pairs of integers $(a_0,b_0)$, $(a_1, b_1)$, and $(a_2,b_2)$ by
setting
\begin{align*}
a_0 &= -\lfloor m/(q+1)\rfloor  & b_0 &= m +a_0(q+1)\\
a_1 &= a_0 - 1                  & b_1 &= m +a_1 (q+1)\\
a_2 &= a_0 - 2                  & b_2 &= m +a_2 (q+1).
\end{align*}
We claim that if $q>7$ then at least one of these pairs $(a_i,b_i)$ satisfies 
the inequalities~\eqref{eq:wedge-ineq} and has $\gcd(b_i,q) = 1.$

First note that the inequality $|m|\le q^{3/2}$ gives
\begin{align*}
- q^{1/2}     &< a_0 < q^{1/2} + 1 &     0 & \le b_0 \le q \\
- q^{1/2} - 1 &< a_1 < q^{1/2}     &  -q-1 & \le b_1 \le -1\\
- q^{1/2} - 2 &< a_2 < q^{1/2} - 1 & -2q-2 & \le b_2 \le -q-2.
\end{align*}
It is easy to check that if $q>7$ then $(a_1,b_1)$ 
satisfies~\eqref{eq:wedge-ineq}, so if $\gcd(b_1,q) = 1$ we are done.  Thus, to 
prove our claim we may assume that we are in the case where
$\gcd(b_1,q) \neq 1$.  Since $q$ is prime, we must have $b_1 = -q$.  Therefore 
$b_0 = 1$ and $b_2 = -2q - 1$.

Since $b_0= 1$ we clearly have $\gcd(b_0,q)=1$.  We check that the only way 
$(a_0,b_0)$ will fail to satisfy~\eqref{eq:wedge-ineq} is if $a_0^2 < 4$.
Thus, if $(a_0,b_0)$ does not satisfy the desired conditions, it must be the
case that $a_0\in\{-1,0,1\}$, from which it follows that $a_2\in\{-3,-2,-1\}$.  
To finish the proof of our claim, we may assume we are in this case.

Since $b_2 = -2q-1$ we have $\gcd(b_2,q) = 1$, and it is easy to check that 
when $q>7$ and $|a_2|\le 3$, the pair $(a_2, b_2)$ 
satisfies~\eqref{eq:wedge-ineq}.  This proves our claim, and shows that the 
lemma holds for $q>7$.

It remains to verify the lemma for primes $q\le 7$.  By hand or machine, it is
not hard to check that for all of the relevant values of $N$ it is \emph{still}
the case that one of the pairs $(a_i,b_i)$ defined above satisfies the 
inequalities~\eqref{eq:wedge-ineq} and has $\gcd(b_i,q) = 1$, with exactly five
exceptions: the cases where $(q,N)$ is one of $(2, 10)$, $(3, 17)$, $(3, 21)$,
$(5, 43)$, or $(7, 73)$.  For these cases, we can take $f$ to be the Weil 
polynomial
$x^4 +   x^3 + 2 x^2 +  2 x +  4$,
$x^4 +   x^3 + 3 x^2 +  3 x +  9$,
$x^4 + 2 x^3 + 3 x^2 +  6 x +  9$,
$x^4 + 2 x^3 + 5 x^2 + 10 x + 25$, or
$x^4 + 2 x^3 + 7 x^2 + 14 x + 49$,
respectively.
\end{proof}

\begin{remark}
With more effort, one can show that the only prime powers $q$ for which the
conclusion of Lemma~\ref{lem:interval} fails to hold are the nonprime prime 
powers $q\le 81$.
\end{remark}

We will also use a slight variant of a result of Matom\"aki~\cite{Matomaki}.

\begin{lemma}
\label{L:Matomaki}
Let $p_n$ denote the $n$-th prime number and let $d_n = p_{n+1}-p_n$ denote the
$n$-th prime gap. For every $c > 1/\sqrt{2}$ there is a constant $B>0$ such that 
\[
\sum_{\substack{d_n > c\sqrt{p_n} \\ \strut p_n \le X}} d_n < B X^{2/3}
\]
for all $X>0$.
\end{lemma}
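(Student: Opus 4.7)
My plan is to deduce the lemma from Matom\"aki's theorem in \cite{Matomaki} by a standard dyadic decomposition argument. Matom\"aki's theorem, stated in the form most convenient for our purposes, gives a bound $\sum d_n \ll Y^{2/3}$ for the sum of prime gaps $d_n$ exceeding a threshold proportional to $\sqrt{Y}$ over primes $p_n$ in a single dyadic interval $[Y, 2Y]$. The task is then to pass from this single-scale estimate, with its fixed proportionality constant, to the uniform statement claimed in the lemma.

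Concretely, I would split the range according to the dyadic intervals $I_k = (X \cdot 2^{-(k+1)}, X \cdot 2^{-k}]$ for $k = 0, 1, 2, \ldots$ and write
\[
\sum_{\substack{d_n > c\sqrt{p_n} \\ p_n \le X}} d_n
 \;=\; \sum_{k \ge 0}\, \sum_{\substack{p_n \in I_k \\ d_n > c\sqrt{p_n}}} d_n.
\]
On each block $I_k$ the quantity $\sqrt{p_n}$ varies by at most a factor of $\sqrt{2}$, so for $p_n \in I_k$ the threshold $c\sqrt{p_n}$ is bounded below by $c\sqrt{X \cdot 2^{-(k+1)}}$. The hypothesis $c > 1/\sqrt{2}$ is precisely what places this bound strictly above the critical proportionality constant appearing in Matom\"aki's estimate at the neighbouring dyadic scale, with a positive amount of slack that is uniform in $k$. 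Applying her theorem at the scale $X \cdot 2^{-k}$ therefore yields
\[
\sum_{\substack{p_n \in I_k \\ d_n > c\sqrt{p_n}}} d_n \;\ll\; \bigl(X \cdot 2^{-k}\bigr)^{2/3},
\]
with implied constant depending on $c$ but independent of $k$ and $X$. Summing the resulting geometric series over $k \ge 0$ delivers the claimed bound $B X^{2/3}$.

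The main technical point is the correct matching of thresholds between our hypothesis $d_n > c\sqrt{p_n}$ and the single-scale estimate quoted from \cite{Matomaki}. The constant $1/\sqrt{2}$ is exactly the value at which this matching across adjacent dyadic scales breaks down, reflecting the $\sqrt{2}$-variation of $\sqrt{p_n}$ within a dyadic block; above it, the argument goes through routinely, and the exponent $2/3$ is inherited verbatim from Matom\"aki. Beyond this bookkeeping and the step of selecting the most convenient form of her theorem to quote, no new ingredients enter the proof.
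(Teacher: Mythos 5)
Your dyadic decomposition does not actually succeed in matching the threshold of Matom\"aki's theorem, and the role you assign to the constant $1/\sqrt{2}$ is not what makes the argument close.

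Matom\"aki's theorem, in the form quoted in the paper, bounds $\sum d_n$ over primes $p_n$ with $x \le p_n \le 2x$ and $d_n \ge \sqrt{x}$; the threshold $\sqrt{x}$ sits at the \emph{left} endpoint of the block. With your blocks $I_k = (X 2^{-(k+1)}, X2^{-k}]$ the only interval of the form $[x,2x]$ containing $I_k$ is obtained by taking $x = X2^{-(k+1)}$, and then all you can extract from the hypothesis is $d_n > c\sqrt{p_n} > c\sqrt{x}$. Since we may (and, after the reduction, do) take $c < 1$, this is \emph{weaker} than the required $d_n \ge \sqrt{x}$, so Matom\"aki's bound does not apply. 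You verify the threshold comparison only where $\sqrt{p_n}$ is near the top of its range on $I_k$ (that is where the inequality $c\sqrt{p_n} \ge \sqrt{X2^{-(k+1)}}$ holds, and where $c > 1/\sqrt{2}$ is indeed the right condition), but near the left endpoint of $I_k$ the inequality fails, and those primes are exactly the ones you cannot push into the next block down without breaking the $[x,2x]$ constraint on the other side. With plain ratio-$2$ blocks the argument would in fact require $c \ge 1$.

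The paper avoids this by using thinner blocks. Setting $b = 1/c^2 \in (1,2)$, it partitions $(1,X]$ into intervals $I_i = ((b/2)^{i+1}X, (b/2)^i X]$ of ratio $2/b = 2c^2 < 2$. Writing $I_i = (bx, 2x]$ with $x = b^i X/2^{i+1}$, one gets $I_i \subset [x,2x]$, and for $p_n \in I_i$ the bound $p_n > bx$ gives $c\sqrt{p_n} > c\sqrt{b}\sqrt{x} = \sqrt{x}$, so the threshold is met uniformly across the block. The hypothesis $c > 1/\sqrt{2}$ enters precisely because it makes $b < 2$, which keeps the blocks nondegenerate and makes the resulting geometric series $\sum_i (b/2)^{2i/3}$ converge. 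To repair your proof you need to replace the ratio-$2$ blocks by these ratio-$2c^2$ blocks (equivalently, blocks of the form $(x/c^2, 2x]$), after which the geometric-series step goes through as you describe.
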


\begin{proof}
If the lemma is true for a given value of $c$ then it is true for all larger 
values, so it suffices to consider the case $c<1$.

Theorem~1.1 (p.~489) of~\cite{Matomaki} states that there is a constant $A>0$
such that for all $x$ we have
\[
\sum_{\substack{d_n \ge \sqrt{x}\\ \strut x\le p_n\le 2x}}  d_n < A x^{2/3}.
\]
Let $b = 1/c^2$, and note that $1 < b < 2$.  Suppose $p_n\le X$ satisfies 
$d_n > c\sqrt{p_n}$, and let $i$ be the unique nonnegative integer such that 
$p_n$ lies in the half-open interval $I_i := \left((b/2)^{i+1} X, (b/2)^i X\right]$.  
Set $x = b^i X / 2^{i+1}$, so that $I_i = (b x, 2x]$. Since $p_n > bx$, we have 
$c \sqrt{p_n} > \sqrt{x}$, so
\[
\sum_{\substack{d_n > c\sqrt{p_n} \\ \strut bx \le p_n \le 2x}} d_n
\le
\sum_{\substack{d_n > \sqrt{x} \\ \strut bx \le p_n \le 2x}} d_n 
\le
\sum_{\substack{d_n > \sqrt{x} \\ \strut x \le p_n \le 2x }} d_n < A x^{2/3}.
\]
The interval $(1,X]$ is the union of the intervals $I_i$, so
\[
\sum_{\substack{d_n > c\sqrt{p_n} \\ \strut p_n \le X}} d_n 
<
\sum_{i\ge 0} A \left(\frac{b^i}{2^{i+1}} X\right)^{2/3}
= B X^{2/3},
\]
where 
$B = A/(2^{2/3} - b^{2/3}).$
\end{proof}

\begin{proof}[Proof of Theorem~\textup{\ref{thm:zerodensity}}]
By Lemma~\ref{lem:interval}, if $N\le X$ is an integer that is not the order of
an abelian surface over a finite field, then $N$ does not satisfy 
$|N-(p+1)^2|< p^{3/2}$ for a single prime~$p$.  If $p_n$ and $p_{n+1}$ are 
consecutive primes for which we have $(p_n+1)^2 < N < (p_{n+1}+1)^2$, then 
$p_n < Y := \sqrt{X}$, and we have
\[
(p_{n+1}+1)^2- (p_n+1)^2 = (p_{n+1}-p_n) (p_{n+1}+p_n+2) 
 > (p_{n+1})^{3/2}+p_n^{3/2}.
\]
It follows that the prime gap $d_n$ satisfies 
\[
d_n > \frac{(p_{n+1}/p_n)^{1/2} p_{n+1} + p_n}{p_{n+1} + p_n + 2} \sqrt {p_n}
    > (5/7) \sqrt{p_n}.
\]
The number of $N\le X$ that are not orders of abelian surfaces is therefore at
most the total length $\sum d_n \cdot (p_{n+1}+p_n+2)$ of those intervals 
$[(p_n+1)^2, (p_{n+1}+1)^2]$ for which $p_n < Y$ and $d_n>(5/7) \sqrt{p_n}$.

Lemma~\ref{L:Matomaki} shows that the sum $\sum d_n$ over all $n$ for
which $p_n < Y$ and $d_n>(5/7) \sqrt{p_n}$ is bounded by $O(Y^{2/3})=O(X^{1/3}).$ 
The $p_n$ are all bounded by $X^{1/2}$, so the sum 
$\sum d_n \cdot (p_{n+1}+p_n+2)$ is bounded by $O(X^{5/6})$.
\end{proof}

\begin{remark}
Just as for elliptic curves, it is a safe conjecture that every positive 
integer occurs as the order of an abelian surface over a finite field. This may
be very hard to \emph{prove}, but there is no practical obstruction in showing
any given integer to be the order of an abelian surface, as it will usually 
arise as $f(1)$ for many quartic Weil $q$-polynomials $f$.
\end{remark}

Any product $f=f_1\cdot f_2$ of two elliptic Weil $q$-polynomials 
$f_i = T^2 - t_i T + q \in\ZZ[T]$ is a genus-$2$ Weil $q$-polynomial.  It
corresponds to the class of abelian surfaces isogenous to the product
$E_1\times E_2$ of elliptic curves $E_i$ with Weil polynomial~$f_i$.  If the 
Jacobian of a genus-2 curve $C$ is in this class, $C$ is said to have 
\emph{split Jacobian}.  In this split case, the order of the Jacobian factors
as
\[
\#J(\FF_q)=\#E_1(\FF_q)\cdot \#E_2(\FF_q),
\]
whereas the curve itself has order 
\begin{equation}
\label{EQ:traces}
\#C(\FF_q)= q+1-t_1-t_2.
\end{equation}
The explicit construction of curves $C$ from $E_1$ and $E_2$ is the topic of 
Section~\ref{S:gluing}.

As for genus~$1$, it is possible to construct abelian surfaces over $\FF_q$ 
with a given quartic Weil $q$-polynomial $f\in\ZZ[X]$ as Jacobians of explicit
genus-$2$ curves using \emph{complex multiplication} methods.  In the most
interesting case where $f$ is irreducible, $K=\QQ[X]/(f)$ is a quartic CM-field
and $\calO=\ZZ[X]/(f)$ an order in $K$. One then wants to find an abelian
surface $A/\FF_q$ for which the subring $\ZZ[\Phi_q]\subset\End A$ generated
by the Frobenius $\Phi_q$ of $A$ is isomorphic to $\calO$.  As in the elliptic 
case, this is done by considering abelian surfaces over the complex numbers 
admitting CM by the order~$\calO$, and even by the full ring of integers 
$\calO_K\supseteq \calO$ of $K$. Such complex abelian surfaces are quotients of
$\CC^2$ modulo suitably embedded $\calO_K$-ideals, and their isomorphism class
is characterized by three absolute \emph{Igusa invariants}, just like the 
isomorphism class of a complex elliptic curve $\CC/I$ is characterized by the
absolute $j$-invariant of the lattice $I$.

In the elliptic case, the isomorphism classes of the curves $\CC/I$ having CM 
by an imaginary quadratic order correspond to the ideal classes of that order, 
and their $j$-invariants form the roots of the Hilbert class polynomial 
\eqref{eq:Hilbpol}, which lies in $\ZZ[X]$. In a similar way, the three Igusa 
invariants of the relevant $\calO_K$-ideal classes form the roots of three 
polynomials $H_{i, K}\in\QQ[X]$, $i=1, 2, 3$. They are known as the
\emph{Igusa class polynomials} of the quartic field $K$, and computing them is 
the key step in any CM-algorithm.

Once one has found the Igusa class polynomials $H_{i, K}$, one can reduce these
modulo $p=\characteristic(\FF_q)$ to find the Igusa invariants of abelian 
surfaces $J$ over $\FF_q$ having CM by $\calO_K$.  Up to twisting over~$\FF_q$,
these have the desired Weil $q$-polynomial. Actual equations of abelian 
surfaces cannot easily be given, but an algorithm of Mestre~\cite{Mestre}
allows us to write down an explicit genus-$2$ curve $C$ having a Jacobian $J$ 
with the desired Igusa invariants.  This allows us to do actual computations in
the group $J(\FF_q)$, in terms of divisors on $C$.

There are myriad details that go into a full explanation of the genus-$2$
CM-method, and of the way one can proceed algorithmically. A detailed account 
that includes the first complete run time analysis was given by
Streng~\cite{Streng}. All we will need in Section \ref{S:Jacobians} is that the
run time of a CM-algorithm to produce genus-$2$ curves $C$ with irreducible
Weil polynomial $f\in\ZZ[X]$ is necessarily exponential in the size 
$\log \Delta_K$ of the discriminant $\Delta_K$ of $K=\QQ[X]/(f)$.  This is
because the degree of the Igusa class polynomials that occur in the algorithm 
grows like a positive power of $\Delta_K$, as follows.

\begin{proposition}
\label{prop:igusadegree}
The degree $n_K$ of the Igusa class polynomials of a quartic CM-field $K$
satisfies $n_K \gg \Delta_K^{1/4-\eps}$ for all $\eps >0$.
\end{proposition}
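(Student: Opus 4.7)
The plan is to bound $n_K$ below by the relative class number $h_K^*:=h_K/h_{\Kp}$ of the CM field, and then to bound $h_K^*$ below by an analytic argument. First I would recall the standard description of the roots of the Igusa class polynomials: they are in bijection with the isomorphism classes of principally polarized abelian surfaces $(A,\lambda)$ over $\CC$ with $\End A\cong\calO_K$. Fixing a primitive CM type $\Phi$ on $K$, Shimura's theory identifies the subset having CM type $\Phi$ with a torsor for the Shimura class group of pairs $([\gotha],\xi)$ satisfying $\xi\gotha\gothabar=\mathfrak{d}_{K/\QQ}^{-1}$ together with the appropriate positivity, taken modulo totally positive units of $\Kp$. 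A direct count shows that the size of this class group is at least $h_K^*/U$ for some unit-index factor $U$ bounded only in terms of $[K:\QQ]=4$, so $n_K\ge h_K^*/U$.

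Next, I would use the factorization $\zeta_K(s)=\zeta_{\Kp}(s)L(s,\chi)$, where $\chi$ is the quadratic Hecke character of $\Kp$ corresponding to the extension $K/\Kp$, and take residues at $s=1$ via the class number formula. Using $[\Kp:\QQ]=2$, the standard regulator identity $R_K/R_{\Kp}=2/Q$ for the Hasse unit index $Q\in\{1,2\}$ (both CM unit ranks equal $1$), and $w_K\ge 2$, a short calculation yields
\[
h_K^*\ge c\cdot L(1,\chi)\cdot\sqrt{\Delta_K/\Delta_{\Kp}}
\]
for an absolute constant $c>0$. The conductor-discriminant relation $\Delta_K=\Norm_{\Kp/\QQ}(\mathfrak{d}_{K/\Kp})\cdot\Delta_{\Kp}^2\ge\Delta_{\Kp}^2$ gives $\Delta_{\Kp}\le\Delta_K^{1/2}$, hence $\sqrt{\Delta_K/\Delta_{\Kp}}\ge\Delta_K^{1/4}$. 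Finally, Siegel's ineffective lower bound, applied to $L(1,\chi)$ whose analytic conductor is bounded by a polynomial in $\Delta_K$, gives $L(1,\chi)\gg_\eps\Delta_K^{-\eps}$ for every $\eps>0$.

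Combining these three estimates produces $h_K^*\gg_\eps\Delta_K^{1/4-\eps}$, and therefore $n_K\gg_\eps\Delta_K^{1/4-\eps}$. The only genuine obstacle is effectivity: the appeal to Siegel's theorem renders the implied constant ineffective, although this is acceptable since the proposition only asserts asymptotic growth. All of the other ingredients---the Shimura bijection between PPAS and polarized ideal classes, the class number formula, the CM regulator identity, and the discriminant relation for a quadratic extension---are classical and present no substantive difficulty.
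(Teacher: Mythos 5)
Your proposal follows essentially the same strategy as the paper: relate $n_K$ to the relative class number $h_K^- = h_K/h_{K^+}$ up to a bounded unit-index factor, lower-bound $h_K^-$ by a Brauer--Siegel type estimate in terms of the relative discriminant $\Delta_K^- = \Delta_K/\Delta_{K^+}$, and use the conductor-discriminant relation $\Delta_{K^+}^2 \mid \Delta_K$ to conclude $\Delta_K^- \ge \Delta_K^{1/2}$. The only difference is that where the paper cites \cite{Streng}*{Lemma~4.14} for the identification of $n_K$ with $h_K^-$ and \cite{Louboutin}*{Corollaries~29 and~32} for the asymptotic $\log h_K^- \sim \frac{1}{2}\log\Delta_K^-$, you instead sketch how both of these facts are proved (via the Shimura class group torsor for the former, and via the class number formula plus Siegel's ineffective bound on $L(1,\chi)$ for the latter); your version is correct and self-contained, at the cost of being longer than the paper's appeal to the literature.
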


\begin{proof}
By \cite{Streng}*{Lemma~4.14}, the degree $n_K$ of the Igusa class polynomials 
of~$K$ is, up to a factor $1$ or~$2$, equal to the relative class number 
$h_K^-= h_K/h_K^+$ of~$K$.  Here $h_K$ and $h_K^+$ denote the class numbers of
$K$ and its real quadratic subfield~$K^+$. 
In \cite{Louboutin}*{Corollaries~29 and~32}, we find the Brauer--Siegel type
result that the logarithm of $h_K^-$ is asymptotic to 
$\frac{1}{2}\log  (\Delta_K^-)$, with $\Delta_K^-=\Delta_K/\Delta_K^+$ the 
quotient of the discriminants of $K$ and~$K^+$. As we have 
$\Delta_K=(\Delta_K^+)^2 \cdot M$, with $M\in\ZZ_{>0}$ the absolute norm of the
relative discriminant of $K$ over $K^+$, we see that 
$\Delta_K^- = \Delta_K^+\cdot M$ is a divisor of $\Delta_K$ exceeding
$\Delta_K^{1/2}$, whence $(\Delta_K^-)^{1/2}\ge \Delta_K^{1/4}$.  The result
follows.
\end{proof}

\section{Genus-2 Jacobians of given order}
\label{S:Jacobians}

\noindent
We now give a proof of Theorem \ref{Th:J}.  The statement of the theorem is 
that, in order to realize all integers $N$ in the interval $[1 , X]$ as orders
of genus-$2$ Jacobians over finite fields, we will necessarily encounter Weil
polynomials generating quartic CM-fields of discriminant exceeding any
prescribed constant multiple of $\sqrt X$, provided that $X$ is sufficiently
large.

All \emph{Weil polynomials} in this Section will be Weil $q$-polynomials of
abelian surfaces, that is, quartic polynomials $f\in\ZZ[T]$ of the form 
\eqref{eq:Weilpolg=2} arising as the characteristic polynomial of the Frobenius
endomorphism acting on the Tate module of an abelian surface defined 
over~$\FF_q$. If $f$ is such a Weil polynomial and $f(1)$ an integer in the
interval $[1 , X]$, then the inequalities $(\sqrt q -1)^4\le f(1)\le X$ imply 
that we have a bound
\begin{equation}
\label{eq:qbound}
\sqrt q\le X^{1/4}+1
\end{equation}
for the square root of the prime power $q$ involved.

We begin by showing that for large $X$, reducible Weil polynomials only account
for very few orders of abelian surfaces in the range $[1 , X]$.

\begin{proposition}
The number of positive integers $N\le X$ arising as the value $f(1)$ of a
reducible quartic Weil polynomial $f\in\ZZ[T]$ is $O(X^{3/4})$ for 
$X\to\infty$.
\end{proposition}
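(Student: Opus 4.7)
The plan is to classify the reducible quartic Weil polynomials and exploit the fact that, in any factorization $f = f_1 f_2$ over $\ZZ[T]$ into monic factors, each factor must itself be a Weil $q$-polynomial. Writing $N_i = f_i(1)$, this forces $N = N_1 N_2$ to factor with both $N_i$ lying in the same narrow Hasse interval $\calH_q = [(\sqrt q-1)^2, (\sqrt q+1)^2]$, and hence to be close to $\sqrt N$. The desired bound will then reduce to a counting estimate for integer pairs.

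First I would dispose of the degenerate cases. Since the complex roots of a Weil polynomial come in complex conjugate pairs of absolute value $\sqrt q$, a reducible quartic Weil $q$-polynomial is either (i) a product $f = f_1 f_2$ of two quadratic Weil $q$-polynomials $f_i = T^2 - t_i T + q$ with $|t_i| \le 2\sqrt q$, or (ii) a polynomial containing a linear factor $T \pm \sqrt q$, which requires $q$ to be a perfect square. By the bound \eqref{eq:qbound} there are only $O(X^{1/4})$ square prime powers $q$ to consider in~(ii); for each, the remaining cofactor contributes $O(\sqrt q)$ possibilities, so case~(ii) produces at most $O(X^{1/2})$ distinct values of $N$ in total, which is absorbed by the target bound.

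Next, in case~(i), set $N_i = q+1-t_i$ and assume $N_1 \le N_2$. Both $N_i$ lie in $\calH_q$, and since $\sqrt N = \sqrt{N_1 N_2}$ lies between $N_1$ and $N_2$, it lies in $\calH_q$ as well. Hence $N_2 - N_1 \le 4\sqrt q$, and since $(\sqrt q-1)^2 \le N_1 \le \sqrt N$ gives $\sqrt q \le N^{1/4}+1$, we obtain the key inequality
\[
0 \;\le\; N_2 - N_1 \;\le\; 4\,N^{1/4} + 4.
\]

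Finally, I count the pairs $(N_1, N_2)$ of positive integers satisfying $N_1 \le N_2$, $N = N_1 N_2 \le X$, and $N_2 - N_1 \le 4 N^{1/4} + 4$. The first two conditions force $N_1 \le \sqrt X$. A short calculation (using $N = N_1 N_2$ and $N_2 < 2 N_1$ for $N_1$ above a constant) shows that for each such $N_1$ the admissible $N_2$ form an interval of length $O(\sqrt{N_1})$ around $N_1$, so summing gives
\[
\sum_{N_1 \le \sqrt X} O(\sqrt{N_1}) \;=\; O(X^{3/4}).
\]
Since every $N$ arising in case~(i) corresponds to at least one such pair, this bounds the total and completes the proof. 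The main obstacle is conceptual rather than technical: one must recognize that the Hasse interval condition forces the factorization $N = N_1 N_2$ to be extremely balanced; without this constraint, a naive count of pairs $(N_1, N_2)$ with $N_1 N_2 \le X$ would yield $O(X \log X)$, far too weak.
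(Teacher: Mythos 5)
Your proof is correct and rests on the same core insight as the paper's: a reducible quartic Weil $q$-polynomial forces $N = N_1 N_2$ with both $N_1, N_2$ in the short Hasse interval $\calH_q$, so the factorization is extremely balanced and the number of attainable $N\le X$ is controlled by $O(X^{3/4})$. The paper groups all reducible cases (including your degenerate case (ii)) as a product $g_1g_2$ of two quadratics over $\ZZ$ right away (using that real roots of a Weil polynomial have even multiplicity, so $(T^2-q)^2$ and $(T\pm\sqrt q)^2$-type factors still pair up into quadratics), whereas you peel off a small degenerate case and bound it crudely --- either is fine. The only real difference is bookkeeping: the paper counts the pair $(2s,2t)=(g_1(1)+g_2(1),\,|g_1(1)-g_2(1)|)$, getting $2s\le 2(X^{1/4}+2)^2=O(X^{1/2})$ and $2t\le 4(X^{1/4}+1)=O(X^{1/4})$ independently and hence a clean $(8+\varepsilon)X^{3/4}$ bound with no summation, while you sum $O(\sqrt{N_1})$ over $N_1\le\sqrt X$. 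Your summation argument does require the auxiliary observation that $N_2<2N_1$ once $N_1$ exceeds a constant (so that $N^{1/4}\asymp\sqrt{N_1}$), which you flag but don't spell out; it is true and easy (if $N_2\ge2N_1$ then $N_2-N_1\ge N_2/2$ while $4N^{1/4}+4\le 4\sqrt{N_2}+4$, forcing $N_2$ to be bounded), but it is an extra step the paper's sum-and-difference parametrization avoids entirely. In short: same theorem, same mechanism, marginally different parametrization; the paper's is a touch slicker.
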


\begin{proof}
Suppose $f\in\ZZ[T]$ is a reducible quartic Weil $q$-polynomial. Since the real
roots of a Weil polynomial occur with even multiplicity, the polynomial $f$ is
either equal to a product $f=g_1g_2$ of two quadratic polynomials 
$g_1, g_2\in\ZZ[T]$ with complex conjugate roots of absolute value $\sqrt q$,
or it is equal to $(x^2 - q)^2$, in which case we write $f = g_1 g_2$ with 
$g_1 = g_2 = -x^2 + q$.  In both cases, we see that the value $N=f(1)$ is the
product of the integers $g_1(1)$ and $g_2(1)$ in the elliptic Hasse interval
$\calH_q$ defined in~\eqref{eq:Hasseqg=1}.

We write $N=g_1(1)\cdot g_2(1)=(s+t)(s-t)$, with $s=(g_1(1)+g_2(1))/2$ a 
half-integer lying in $\calH_q$, and $t=|g_1(1)-g_2(1)|/2$ a nonnegative 
half-integer of absolute value at most $2\sqrt q$. By \eqref{eq:Hasseqg=1} 
and \eqref{eq:qbound}, the positive integer $2s$ can be bounded by 
\[
2s \le 2(\sqrt q +1)^2 \le 2(X^{1/4}+2)^2,
\]
whereas $2t$ is a nonnegative integer not exceeding $4(X^{1/4}+1)$. As the 
integers $2s$ and $2t$ determine $N$, we see that for every $\eps > 0$, no 
more than $(8+\eps)X^{3/4}$ values of $N$ occur in $[1, X]$, when $X$ is 
sufficiently large.
\end{proof}

\begin{corollary}
\label{cor:split}
The integers $N$ arising as the value $f(1)$ of a reducible genus-$2$ Weil
polynomial $f\in\ZZ[T]$ form a zero-density subset of all positive integers.
\qed
\end{corollary}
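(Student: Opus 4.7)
The plan is to deduce the corollary directly from the proposition that immediately precedes it. That proposition says the number of positive integers $N \le X$ arising as $f(1)$ for a reducible quartic Weil polynomial $f \in \ZZ[T]$ is $O(X^{3/4})$, and I would simply observe that a set $S \subseteq \ZZ_{>0}$ with $\#(S \cap [1,X]) = O(X^{3/4})$ has natural density
\[
\lim_{X \to \infty} \frac{\#(S \cap [1,X])}{X} \le \lim_{X \to \infty} \frac{C \, X^{3/4}}{X} = 0.
\]
So there is really no new content to prove; the corollary is essentially a restatement of the previous proposition in asymptotic language.

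The only thing I would be careful about is the alignment between the phrase \emph{genus-$2$ Weil polynomial} in the corollary and the phrase \emph{quartic Weil polynomial} appearing in the proposition. These refer to the same objects: the quartic Weil $q$-polynomials of the form \eqref{eq:Weilpolg=2} arising as characteristic polynomials of Frobenius on the Tate module of an abelian surface over $\FF_q$, as fixed at the start of Section~\ref{S:Jacobians}. Once that identification is pointed out, the corollary follows with no further work.

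Since the entire argument is a one-line consequence of the previous proposition, I would not expect any obstacle; the real work has been done in proving the $O(X^{3/4})$ bound. One could equivalently phrase the corollary as saying that the counting function grows sublinearly, but the cleanest presentation is just to invoke the preceding proposition and note that $X^{3/4} = o(X)$.
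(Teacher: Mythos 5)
Your proposal is correct and matches the paper exactly: the corollary carries a \qed in its statement precisely because it is an immediate consequence of the preceding proposition, via the observation that $O(X^{3/4}) = o(X)$. Your note about the terminology ("genus-$2$" versus "quartic" Weil polynomial) is a fair point of care, but as you say they coincide in the paper's conventions, so nothing further is needed.
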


We can now focus on \emph{irreducible} Weil polynomials $f$, which have the
property that a root of $f$ generates a quartic CM-field $K=\QQ[T]/(f)$ 
over~$\QQ$. Given~$K$, the number of such $f$ can be bounded in the following
way; compare to \cite{EL}*{Prop.~4}.

\begin{proposition}
\label{prop:howeslemma}
Let $K$ be a quartic CM-field having $w_K$ roots of unity, and~$q$ a prime
power. Then there are at most $2w_K$ irreducible quartic Weil $q$-polynomials
having a root in $K$.
\end{proposition}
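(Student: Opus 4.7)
The plan is to translate the count of polynomials into a count of their roots in $K$.  Let $f$ be an irreducible quartic Weil $q$-polynomial with a root $\pi \in K$; then $\QQ(\pi)=K$, and $f$ factors over $\Kp$ as
\[
f = (T^2 - xT + q)(T^2 - \sigma(x)T + q),
\]
where $x = \pi+\bar\pi \in \Kp$ (with $\bar{\phantom{x}}$ denoting the complex conjugation on $K$) and $\sigma$ generates $\mathrm{Gal}(\Kp/\QQ)$.  In particular, $\pi\bar\pi = q$.  Each such $f$ has exactly $c$ roots in $K$, where $c=4$ if $K/\QQ$ is Galois (so that the full Galois orbit of $\pi$ lies in $K$) and $c=2$ otherwise (so that only $\{\pi,\bar\pi\}$ lies in $K$).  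Setting
\[
S := \{\pi \in \calO_K : \pi\bar\pi = q \text{ and } \QQ(\pi) = K\},
\]
the number of polynomials in question equals $|S|/c$, and the proposition becomes the inequality $|S| \le 2cw_K$.

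The group $\mu(K)$ of roots of unity in $K$ acts freely on $S$ by multiplication, since $\zeta\bar\zeta = 1$ for every $\zeta \in \mu(K)$.  Two elements $\pi_1, \pi_2 \in S$ generate the same principal ideal of $\calO_K$ if and only if they lie in the same $\mu(K)$-orbit: if $\pi_1/\pi_2 \in \calO_K^{\times}$, the identities $\pi_1\bar\pi_1 = \pi_2\bar\pi_2 = q$ force the norm of $\pi_1/\pi_2$ over $\Kp$ to be $1$, so by Kronecker's theorem $\pi_1/\pi_2$ is a root of unity.  Writing $N$ for the number of principal ideals $I \subseteq \calO_K$ satisfying $I\bar I = (q)$ and admitting a generator $\pi$ with $\pi\bar\pi = q$ exactly, one has $|S| = Nw_K$, and the proposition reduces to $N \le 2c$.

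To prove $N \le 2c$, I would analyze the factorization of $(q)$ into prime ideals of $\calO_K$ and invoke the Honda--Tate conditions that distinguish the characteristic polynomials of Frobenius on \emph{abelian surfaces} (as opposed to higher-dimensional simple abelian varieties whose Frobenius polynomial is an even power of a quartic): at every prime $v$ of $K$ above the residue characteristic $p$ of $q$, the rational number $v(\pi)\,[K_v:\QQ_p]/v(q)$ must be an integer, otherwise $f$ fails to arise from an abelian variety of dimension $2$.  Combined with the action of $\mathrm{Aut}(K/\QQ)$ on the set of admissible ideals --- which identifies ideals whose generators yield Galois-conjugate polynomials --- these constraints collapse the admissible ideals into at most $2c$ in number.

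The hardest step will be organizing this final count:  one must track, for each of the three types of quartic CM-field ($\mathrm{Gal}(K/\QQ)$ cyclic, biquadratic, or non-Galois with dihedral closure), and for each splitting type of $p$ in $\calO_K$, exactly which ideals $I$ with $I\bar I = (q)$ are Honda--Tate admissible and carry a generator of norm $q$, and then verify the uniform bound $N \le 2c$ by a case-by-case check.
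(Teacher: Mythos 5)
Your setup mirrors the paper's: count Weil $q$-numbers $\pi\in K$ with $\pi\bar\pi=q$, show that those generating a fixed principal ideal form a single $\mu(K)$-orbit (the Kronecker-theorem argument is exactly the paper's), and then divide by the number of roots of $f$ lying in $K$. Where the two diverge is in the final, and genuinely substantive, step: bounding the number of admissible ideals. You reduce the proposition to the inequality $N\le 2c$, where $N$ is the number of admissible principal ideals and $c\in\{2,4\}$ is $|\operatorname{Aut}(K/\QQ)|$, and then say only that this ``should'' follow from the Honda--Tate valuation condition and a case analysis over the three Galois types and the splitting behavior of $p$. That is the actual content of the proposition, and you have not carried it out; as written, the proposal is a plan, not a proof.

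The paper's completed argument is also strictly simpler than what you are aiming for. It shows, uniformly and with no reference to the Galois type of $K$, that there are at most $4$ ideals $\gotha$ with $\gotha\gothabar=(q)$ satisfying the Honda--Tate condition \eqref{EQ:HT}: at a prime $\gothp$ of $K$ with $\gothp=\gothpbar$ the valuation $\ord_\gothp\gotha$ is forced, and at a split pair $\{\gothp,\gothpbar\}$ there are at most two or three choices depending on the residue degree, with the worst case over all splitting patterns being $4$. Combined with $|S|\le 4w_K$ and the observation that \emph{every} irreducible quartic has at least two roots $\pi\ne\bar\pi$ in $K$ (so one need never compute $c$ exactly), this gives the bound $2w_K$ directly. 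Your introduction of the exact count $c$ of roots in $K$ is a correct observation, but it buys nothing here: the uniform bound $N\le 4$ already implies $N\le 2c$ for both values of $c$, so distinguishing the Galois from the non-Galois case only adds work. If you want to complete your argument, the fastest route is to drop the case split on $c$, prove $N\le 4$ by the local analysis at primes over $p$ as the paper does, and use the crude ``at least two roots per polynomial'' bound.
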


\begin{proof}
Let $q$ be a power of a prime $p$, and $\pi\in K$ a quartic Weil $q$-number,
that is, an algebraic integer $\pi$ of degree four with 
$|\varphi(\pi)| = \sqrt{q}$ for all complex embeddings
$\varphi\colon K\to\CC$. Then we have $\pi\pibar = q$, where $x\mapsto\bar{x}$ 
denotes conjugation over the maximal real subfield $\Kp$ of $K$. By the 
Honda--Tate theorem~\cite{Tate}, the dimension of the abelian varieties in 
the isogeny class associated to $\pi$ can be read off from properties of the
principal ideal $\gotha = (\pi)$, which satisfies $\gotha\gothabar = (q)$ and 
is only divisible by primes lying over~$p$. In particular, the conjugacy class
of $\pi$ corresponds to an isogeny class of abelian surfaces if and only if we
have           
\begin{equation}                                                   
\label{EQ:HT}                                                     
\frac{f_\gothp \ord_\gothp \gotha}{\ord_\gothp q}\in\ZZ         
\end{equation}                                                                  
for every prime $\gothp$ of $K$ lying over $p$. Here $f_\gothp$ denote the
residue class degree of $\gothp$.

We first show that there are at most four integral ideals $\gotha$ of $K$        
satisfying $\gotha\gothabar = (q)$ for which~\eqref{EQ:HT} holds for all 
primes $\gothp$ of $K$ over~$p$.         

Suppose $\gotha$ is such an ideal.  Let $\gothp$ be a prime of $K$ lying 
over~$p$.  If $\gothp = \gothpbar$ then the condition $\gotha\gothabar = (q)$
shows that $2 \ord_\gothp \gotha = \ord_\gothp q$, so the order of $\gotha$ at
$\gothp$ is determined.  On the other hand, suppose $\gothp \neq \gothpbar$.
Then $f_\gothp\le 2$, and from $\gotha\gothabar = (q)$ we see that 
\[
\ord_\gothp\gotha + \ord_{\gothpbar}\gotha = \ord_\gothp q.
\]             
Thus, from~\eqref{EQ:HT} we see that either                                  
\begin{enumerate}                                                            
\item[(a)] $\ord_\gothp\gotha = 0$ and 
           $\ord_{\gothpbar}\gotha = \ord_\gothp q$, or 
\item[(b)] $\ord_\gothp\gotha = \ord_\gothp q$ and 
           $\ord_{\gothpbar}\gotha = 0$, or
\item[(c)] $f_\gothp = 2$ and 
           $\ord_\gothp\gotha = \ord_{\gothpbar}\gotha = (1/2)\ord_\gothp q$.  
\end{enumerate}                                                               
In short, there is one possibility for $\ord_\gothp\gotha$ if $\gothp$ is 
ramified or inert in $K/\Kp$, there are two possibilities for the pair 
$(\ord_\gothp\gotha,\ord_{\gothpbar}\gotha)$ if $\gothp$ splits in $K/\Kp$ and
lies over a prime of $\Kp$ with residue class field degree~$1$, and there are
at most three possibilities for the pair 
$(\ord_\gothp\gotha,\ord_{\gothpbar}\gotha)$ if $\gothp$ splits in $K/\Kp$ and
lies over a prime of $\Kp$ with residue class field degree~$2$.  By considering
the various ways $p$ can split in $K$, we find that there are at most four
possibilities for the vector of valuations of $\gotha$ at the primes over~$p$,
so there are no more than four ideals $\gotha$ with $\gotha\gothabar = (q)$ and
such that~\eqref{EQ:HT} holds for all primes $\gothp$ of $K$ over~$p$.   

Suppose such an ideal $\gotha$ is generated by a Weil number~$\pi_0$. If 
$\gotha$ is also generated by another Weil number~$\pi$, then $\pi/\pi_0$ is a
unit of~$K$, and $\varphi(\pi/\pi_0)$ lies on the unit circle for all 
embeddings $\varphi$ of $K$ into the complex numbers.  It follows that 
$\pi = \zeta\pi_0$ for some root of unity~$\zeta$.  Therefore, if $\gotha$ can 
be generated by any Weil numbers at all, it can be generated by exactly $w_K$
of them. Thus, the number of Weil $q$-numbers in $K$ is at most $4 w_K$.  

Suppose $f$ is an irreducible quartic Weil $q$-polynomial with a root $\pi$ 
in~$K$.  Then $\pibar$ is also a root of $f$ in $K$, and $\pi\ne\pibar$ because
$\pi$ is a root of an irreducible quartic and hence not an element of the real
subfield of $K$.  Thus, every irreducible quartic Weil $q$-polynomial with a
root in $K$ produces at least two distinct Weil numbers in~$K$, so the number
of such polynomials with roots in $K$ is at most $2 w_K$.
\end{proof}

\begin{corollary}
\label{cor:weilpolsnumber}
Let $K$ be a quartic CM-field.  Then the number of irreducible genus-$2$ Weil
$q$-polynomials with  a root in $K$ that satisfy the bound 
\[
\sqrt q\le X^{1/4}+1 \leqno{\eqref{eq:qbound}}
\]
is at most $50 X^{1/2}/\log X$ for $X$ sufficiently large.
\end{corollary}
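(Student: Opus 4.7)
The plan is to combine the per-field bound from Proposition~\ref{prop:howeslemma} with a uniform bound on $w_K$ valid for every quartic CM-field, and with a direct count of prime powers $q$ in the range given by~\eqref{eq:qbound}.

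First, Proposition~\ref{prop:howeslemma} says that for each prime power $q$ there are at most $2w_K$ irreducible quartic Weil $q$-polynomials having a root in $K$. So the quantity to be estimated is at most $2w_K$ times the number of prime powers $q$ satisfying $q\le (X^{1/4}+1)^2 = X^{1/2}+2X^{1/4}+1$.

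Next I would observe that $w_K\le 12$ uniformly in $K$. Indeed, any primitive $n$-th root of unity lying in $K$ generates a subfield $\QQ(\zeta_n)$ of $K$, whose degree $\varphi(n)$ must divide $[K\col\QQ]=4$. Enumerating the $n$ with $\varphi(n)\in\{1,2,4\}$ yields $n\in\{1,2,3,4,5,6,8,10,12\}$, whence $w_K\le 12$ and $2w_K\le 24$. Then I would invoke the prime number theorem: the number of primes up to $Y$ is $(1+o(1))Y/\log Y$, while proper prime powers up to $Y$ contribute only $O(Y^{1/2})$. Taking $Y=X^{1/2}+2X^{1/4}+1$, so that $\log Y=\tfrac12\log X + o(1)$, the total number of admissible prime powers is $(2+o(1))X^{1/2}/\log X$.

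Multiplying, the overall count is at most $24\cdot(2+o(1))X^{1/2}/\log X = (48+o(1))X^{1/2}/\log X$, which is below $50\,X^{1/2}/\log X$ once $X$ is sufficiently large. There is no real obstacle here: the elementary bound $w_K\le 12$ and the $(2+o(1))$ constant from the PNT both have some slack, and that slack is precisely what produces the constant~$50$ in the statement.
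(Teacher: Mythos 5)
Your argument is correct and follows exactly the same route as the paper: bound the per-$q$ count by $2w_K$ via Proposition~\ref{prop:howeslemma}, use $w_K\le 12$, count prime powers $q\le (X^{1/4}+1)^2$ via the prime number theorem to get $(2+o(1))X^{1/2}/\log X$, and multiply to land under $50\,X^{1/2}/\log X$. The only cosmetic difference is that you spell out the enumeration $\varphi(n)\mid 4$ justifying $w_K\le 12$, which the paper takes as known.
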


\begin{proof}
It is easy to see that the number of integers less than $y$ of the form $a^n$
with $n>1$ is less than $\sqrt{y}\log_2 y$.  Combining this fact with the prime
number theorem, we find that the number of prime powers less than $y$ is 
asymptotic to $y / \log y$.  It follows that the number of prime powers $q$ up 
to $(X^{1/4}+1)^2$ is less than $(2+\eps) X^{1/2}/\log X$, for $X\gg_\eps 0$.
From~Lemma~\ref{prop:howeslemma} we see that for each of these $q$ there are at
most $2 w_K$ irreducible quartic Weil $q$-polynomials with a root in~$K$. 
Since $w_K\le12$ for quartic fields~$K$, the corollary follows.
\end{proof}

Now that we know an upper bound on the number of Weil polynomials `coming from'
a fixed quartic CM-field $K$, we still need a result that expresses the fact
that there are not too many quartic CM-fields of small discriminant.

\begin{proposition}
\label{proposition:cohen}
For $B\in\ZZ_{>0}$ a sufficiently large integer, the number of isomorphism 
classes of quartic CM-fields of discriminant at most $B$ is bounded by~$B$.
\end{proposition}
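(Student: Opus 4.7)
The plan is to partition the quartic CM-fields according to their unique real quadratic subfield $\Kp$ and count the CM quadratic extensions of each $\Kp$ separately. Every quartic CM-field $K$ is a totally imaginary quadratic extension of the unique maximal totally real subfield $\Kp$, which here is a real quadratic field, and the tower formula for discriminants gives
\[
|\Delta_K| \;=\; \Delta_{\Kp}^{\,2}\cdot N_{\Kp/\QQ}(\mathfrak{d}_{K/\Kp}).
\]
Thus $|\Delta_K|\le B$ forces $\Delta_{\Kp}\le\sqrt{B}$, and since a real quadratic field is determined by its discriminant, the number of possibilities for $\Kp$ is trivially at most $\sqrt{B}$.

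Fix such a $\Kp$ and set $M:=B/\Delta_{\Kp}^{\,2}$; the task is to bound the number of quadratic extensions $K/\Kp$ that are totally imaginary and satisfy $N(\mathfrak{d}_{K/\Kp})\le M$. By Kummer theory any such $K$ has the form $\Kp(\sqrt{\delta})$ for a totally negative $\delta\in\calO_{\Kp}$, unique modulo squares in $\Kp{}^{*}$. After absorbing square ideal factors we may assume $(\delta)=\mathfrak{b}\mathfrak{c}^{2}$ with $\mathfrak{b}$ squarefree away from primes over $2$; then $\mathfrak{d}_{K/\Kp}$ equals $\mathfrak{b}$ times a factor of norm bounded by an absolute constant supported above $2$, so $N(\mathfrak{b})=O(M)$. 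For each squarefree $\mathfrak{b}$ of norm $O(M)$, the admissible $\delta$ form a torsor for the product of $\mathrm{Cl}(\Kp)[2]$ and $\calO_{\Kp}^{*}/(\calO_{\Kp}^{*})^{2}$, whose cardinality is $O(\Delta_{\Kp}^{\,\eps})$ by genus theory together with Dirichlet's unit theorem (the unit rank of a real quadratic field being the absolute constant $1$). Since the number of squarefree integral ideals of $\calO_{\Kp}$ of norm at most $CM$ is $O(M)$ with an absolute implied constant, the total count of CM-extensions of a fixed $\Kp$ is $O(M\cdot\Delta_{\Kp}^{\,\eps})$.

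Summing over $\Kp$ now gives
\[
\sum_{\Delta_{\Kp}\le\sqrt{B}} O\!\left(\frac{B}{\Delta_{\Kp}^{\,2-\eps}}\right)
 \;=\; O\!\left(B\cdot\sum_{n\ge 3} n^{-2+\eps}\right) \;=\; O(B),
\]
since the series $\sum_{n\ge 3}n^{-2+\eps}$ converges to an absolute constant for small $\eps>0$. Sharpening this via the totally-negativity condition on $\delta$ (a positive-density loss in $\calO_{\Kp}^{*}/(\calO_{\Kp}^{*})^{2}$) yields in fact $o(B)$, which in particular is below $B$ for all sufficiently large integers $B$. The main technical obstacle in this plan is the uniform control of Kummer-theoretic data as $\Kp$ varies — in particular, bounding the $2$-rank of $\mathrm{Cl}(\Kp)$ and the ideal-counting constants independently of $\Kp$ — and this is precisely what genus theory and the fixed unit rank of real quadratic fields provide.
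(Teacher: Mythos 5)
Your approach is genuinely different from the paper's: instead of citing Cohen--Diaz y Diaz--Olivier for the asymptotic count of $D_4$, $C_4$, and $V_4$ quartic fields, you attempt a direct sieve-style count, partitioning by the real quadratic subfield $\Kp$, applying the tower formula $\Delta_K = \Delta_{\Kp}^2 N(\mathfrak{d}_{K/\Kp})$, and using Kummer theory to count CM quadratic extensions of each $\Kp$. The skeleton of this approach is sound and would yield a bound of the shape $C\cdot B$. The paper instead quotes~\cite{CDO} to obtain the precise asymptotic $\sim c\cdot B$ with $c\approx 0.05$ for the dominant $D_4$ case (the $C_4$ and $V_4$ contributions being $o(B)$), and the smallness of $c$ is what makes the statement true.

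There is, however, a genuine gap in your argument. Your computation gives
$\sum_{\Delta_{\Kp}\le\sqrt{B}} O\!\bigl(B/\Delta_{\Kp}^{2-\eps}\bigr) = O(B)$
with an \emph{unspecified} implied constant, but the proposition requires the count to be at most $B$ itself. You try to close this by asserting that the totally-negativity constraint on $\delta$ sharpens the bound to $o(B)$. That claim is false: the number of quartic CM-fields of discriminant $\le B$ is asymptotically $c\cdot B$ with $c>0$ (indeed $c\approx 0.05$, as the paper records from~\cite{CDO}), so it is certainly not $o(B)$. The sign condition at the two real places of $\Kp$ cuts the count by a bounded factor, not to a lower order of magnitude. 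A second, smaller issue: the assertion that the number of squarefree integral ideals of $\calO_{\Kp}$ of norm $\le CM$ is $O(M)$ ``with an absolute implied constant'' is not justified uniformly in $\Kp$; the standard bound carries a factor on the order of $L(1,\chi_{\Kp})$ or $\log M$, which you can absorb into the $\Delta_{\Kp}^{\eps}$ but which prevents a clean absolute constant. To make your approach yield the proposition as stated you would need to track all the local densities carefully enough to show the resulting constant is below $1$, which is precisely the kind of computation that~\cite{CDO} carries out and that the paper therefore delegates to them.
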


\begin{proof}
If $K$ is a quartic CM-field, then the Galois group over $\QQ$ of its normal
closure is the dihedral group $D_4$ of order~$8$, the cyclic group $C_4$ of 
order~$4$, or the Klein four group $V_4=C_2\times C_2$. It follows from the
results of Cohen et al.~\cite{CDO} that the number of isomorphism classes of 
quartic CM-fields $K$ of discriminant $\Delta_K\le B$ with group $D_4$ is 
asymptotically equal to $c\cdot B$, where $c\approx.05$ is some explicit real
constant.  As the number of isomorphism classes of quartic fields $K$ with
groups $C_4$ and $V_4$ and bounded discriminant $\Delta_K\le B$ is 
asymptotically much smaller, and grows \cite{CDO}*{Section 1.1} like 
$c'\cdot B^{1/2}$ and $c''\cdot B^{1/2}(\log B)^2$ for certain explicit 
positive constants $c', c''$, the result follows.
\end{proof}

After these preparations, the proof of Theorem~\ref{Th:J} is more or less 
straightforward.

\begin{proof}[Proof of Theorem~\textup{\ref{Th:J}}]
Suppose that 
\[
\limsup_{N\to\infty} \frac{\Delta(N)}{\sqrt N}
\]
assumes a finite value. Then there exist a constant $C>0$ such that we have 
\begin{equation}
\label{eq:absurd}
\Delta(N)\le C \sqrt N \qquad\text{for all integers $N>0$.}
\end{equation}
Let $\mathbf{A}\subset \ZZ_{>0}$ be the subset of integers that are not the 
value $f(1)$ of any genus-$2$ Weil polynomial $f$, or the value $f(1)$ of a
reducible genus-$2$ Weil polynomial~$f$.  Then $\mathbf{A}$ is a zero density 
subset by Theorem \ref{thm:zerodensity} and Corollary \ref{cor:split}.

For all integers $N\notin \mathbf{A}$, the minimal discriminant $\Delta(N)$ is 
the discriminant of a quartic CM-field $\QQ[T]/(f_N)$, with $f_N$ an 
irreducible quartic Weil polynomial satisfying $N=f_N(1)$. If we take $X$
sufficiently large, then there are $(1-\eps)X$ integers $N$
lying in $[1,X]\setminus\mathbf{A}$, with $\eps>0$ small. Moreover, among the 
CM-fields $\QQ[T]/(f_N)$ that occur for these integers, there will be at least
$(1/50) (1-\eps) X^{1/2}\log X$ pairwise nonisomorphic fields, as a single 
isomorphism class will yield no more than $50 X^{1/2}/\log X$ polynomials
$f_N$ by Corollary~\ref{cor:weilpolsnumber}.  By 
Proposition~\ref{proposition:cohen}, we will find values
\[
\Delta(N)\ge \frac{1}{50} (1-\eps) X^{1/2}\log X
\]
among $N\in [1,X]$ for $X\gg_\eps 0$.  This contradicts \eqref{eq:absurd}.
\end{proof}

As we state it, Theorem~\ref{Th:J} leaves open the possibility that there is
only a very thin set of integers $N$ on which $\Delta(N)/\sqrt N$ is unbounded.
This is however not the case, as an easy adaptation of the proof shows.

\begin{corollary}
\label{cor:generic}
Let $S$ be a set of positive integers of positive density. Then the minimal
genus-$2$ Jacobian discriminant in Theorem $\ref{Th:J}$ satisfies
\[
\limsup_{N\in S, N\to\infty} \frac{\Delta(N)}{\sqrt N}=+\infty.
\]
\end{corollary}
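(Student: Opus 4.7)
The plan is to rerun the proof of Theorem~\ref{Th:J} almost verbatim, tracking the lower density $\delta > 0$ of $S$ in place of the full density of $\ZZ_{>0}$. Suppose for contradiction that there were a constant $C > 0$ such that $\Delta(N) \le C\sqrt{N}$ for all sufficiently large $N \in S$. Fix $\varepsilon \in (0, \delta/2)$. Because the exceptional set $\mathbf{A}$ appearing in the proof of Theorem~\ref{Th:J} has zero density by Theorem~\ref{thm:zerodensity} and Corollary~\ref{cor:split}, for $X$ sufficiently large we have
\[
\#\bigl((S \setminus \mathbf{A}) \cap [1,X]\bigr) \ge (\delta - \varepsilon) X.
\]
This is the one place in the argument where the positive density hypothesis on $S$ actually enters.

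Next, for each such $N$, the minimal discriminant $\Delta(N)$ is the discriminant of a quartic CM-field $K_N = \QQ[T]/(f_N)$ for some irreducible quartic Weil $q_N$-polynomial $f_N$ satisfying $N = f_N(1)$ and $\sqrt{q_N} \le X^{1/4}+1$. By Corollary~\ref{cor:weilpolsnumber}, each fixed isomorphism class of quartic CM-field accounts for at most $50 X^{1/2}/\log X$ of the values of $N$ under consideration, so at least
\[
\frac{(\delta - \varepsilon) X}{50\, X^{1/2}/\log X} \;=\; \frac{\delta - \varepsilon}{50}\, X^{1/2} \log X
\]
pairwise nonisomorphic quartic CM-fields appear among the $K_N$. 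Proposition~\ref{proposition:cohen} then forces at least one of these fields to have discriminant exceeding $\tfrac{\delta-\varepsilon}{50}\, X^{1/2}\log X$, producing some $N \in S \cap [1,X]$ with
\[
\frac{\Delta(N)}{\sqrt{N}} \;\ge\; \frac{\Delta(N)}{\sqrt{X}} \;\ge\; \frac{\delta-\varepsilon}{50}\, \log X.
\]
This tends to infinity with $X$, contradicting the assumption $\Delta(N) \le C\sqrt{N}$ on~$S$.

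Since every step beyond the initial counting estimate is identical to the proof of Theorem~\ref{Th:J}, I anticipate no real obstacle: the substantive inputs (Corollary~\ref{cor:weilpolsnumber} and Proposition~\ref{proposition:cohen}) are already in hand, and the sole modification is replacing the factor $(1-\varepsilon)$ of that proof by $(\delta - \varepsilon)$. The one point that deserves a moment's care is confirming that removing the zero-density set $\mathbf{A}$ does not destroy the positive density of $S$, which is immediate from the definition of zero density.
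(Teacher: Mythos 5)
Your proposal is correct and follows exactly the same approach as the paper, which simply sketches in one sentence that the argument of Theorem~\ref{Th:J} carries over with $S\setminus\mathbf{A}$ (still of positive density) in place of $[1,X]\setminus\mathbf{A}$. You have filled in the same bookkeeping (the $(\delta-\varepsilon)$ factor replacing $(1-\varepsilon)$) that the paper leaves implicit, using Corollary~\ref{cor:weilpolsnumber} and Proposition~\ref{proposition:cohen} just as the paper does.
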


\begin{proof}
For any set $S$ of positive density, the number of pairwise nonisomorphic
CM-fields $\QQ[T]/(f_N)$ encountered (as in the preceding proof) for $N$ on 
the set $S\setminus \mathbf{A}$ of positive density will still be at least 
$c X^{1/2}\log X$ for some $c>0$.
\end{proof}

\begin{remark}
The single factor of $\log x$ that makes the proof of Theorem \ref{Th:J} work 
suggests that the theorem is rather sharp, but in reality it is not. In fact, 
Corollary~\ref{cor:weilpolsnumber} is far from optimal, as is does not take 
into account that the existence of Weil $q$-numbers in a quartic CM-field $K$
not only implies that the rational prime divisor of $q$ has a certain splitting
behavior in $K$, as indicated in the proof of
Proposition~\ref{prop:howeslemma}, but also that the ideal $\gotha$ occurring
in the proof is \emph{principal}. In view of the growth of class numbers
with~$\Delta_K$, these are serious restrictions that we simply disregarded.
\end{remark}

By a \emph{CM-construction} for genus-2 Jacobians of prescribed order~$N$ we
mean \emph{any} algorithm that, in order to find a curve $C$ over $\FF$ with 
Jacobian of order~$N$, writes down\footnote{
  By `writing down' a polynomial in $x$ of degree $n$, we mean writing down
  the coefficient of each monomial $1, x, \ldots, x^n$ separately, even the 
  ones that happen to be zero.  Thus, in this reckoning, it takes time $n+1$
  to write down the polynomial $x^n - 1$.
}
the Igusa class polynomials of a quartic CM-field $K$ such that a curve in
characteristic zero with CM by $K$ reduces to $C$ over $\FF$.
Theorem~\ref{Th:J} implies that such constructions will necessarily have 
exponential run time.

\begin{corollary}
\label{cor:runtime}
Any CM-construction for genus-$2$ Jacobians of prescribed order~$N$ will have
an exponential run time, of order at least $N^{1/8 -\eps}$ for all $\eps >0$.
\end{corollary}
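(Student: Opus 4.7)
The plan is to chain together three results that have already been established: the definition of a CM-construction just above the corollary, Proposition \ref{prop:igusadegree}, and Theorem \ref{Th:J}. There is no real new mathematics involved, only careful bookkeeping, so this is more of an assembly argument than a proof with a single hard step.

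First I would unwind the definition. A CM-construction run on input $N$ produces, in the course of its computation, the three Igusa class polynomials $H_{1,K}, H_{2,K}, H_{3,K}$ of some quartic CM-field $K$ that is associated to a pair $(C,\FF)$ realizing~$N$. Hence $\Delta_K$ is one of the discriminants entering into the minimum defining $\Delta(N)$, so $\Delta_K \ge \Delta(N)$ whenever $\Delta(N)>0$. By the footnote's convention on what it means to "write down" a polynomial, the run time of the algorithm is bounded below by the common degree $n_K$ of the $H_{i,K}$ (actually by $n_K+1$, but constants are irrelevant here).

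Next I would apply Proposition \ref{prop:igusadegree} to get $n_K \gg \Delta_K^{1/4-\delta}$ for any $\delta>0$. Combined with $\Delta_K \ge \Delta(N)$, the run time on input $N$ is therefore $\gg \Delta(N)^{1/4-\delta}$. Now Theorem \ref{Th:J} produces an infinite sequence of integers $N$ along which $\Delta(N)/\sqrt{N} \to \infty$; in particular, for any constant $M>0$, one has $\Delta(N)\ge M\sqrt{N}$ for infinitely many $N$ (and for these $N$ the value $\Delta(N)$ is automatically positive, so the argument above applies). Along such $N$ the run time is
\[
\gg \Delta(N)^{1/4-\delta} \ge (M\sqrt{N})^{1/4-\delta} \gg N^{1/8-\delta/2}.
\]
Given $\eps>0$, taking $\delta = 2\eps$ yields the claimed lower bound $N^{1/8-\eps}$, achieved infinitely often, which is the worst-case bound asserted by the corollary.

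The only subtlety — the closest thing to an obstacle — is making sure that the CM-field $K$ that the algorithm produces really does enter the minimum defining $\Delta(N)$, and is not some auxiliary field that happens to appear in the computation but is not associated to the output curve. This is handled cleanly by the definition of a CM-construction given directly above the corollary, which stipulates exactly that a curve with CM by $K$ reduces to $C$ over $\FF$. One should also note that the three factors (degree of Igusa class polynomial, relation of $\Delta_K$ to $\Delta(N)$, and growth of $\Delta(N)$) all lose an arbitrarily small $\eps$, but each contributes only a $\delta$-type slack, which can be absorbed into the single $\eps$ in the final exponent.
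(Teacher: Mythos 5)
Your proposal is correct and follows essentially the same chain of reasoning as the paper's own proof: Theorem~\ref{Th:J} for the growth of $\Delta(N)$, Proposition~\ref{prop:igusadegree} for the degree of the Igusa class polynomials, and the footnoted definition of ``writing down'' to get a run-time lower bound. The paper's version is just terser; your more explicit bookkeeping of the $\eps$-slacks, of the inequality $\Delta_K\ge\Delta(N)$, and of the positivity of $\Delta(N)$ along the relevant subsequence is all consistent with what the authors intend.
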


\begin{proof}
As the discriminants of the CM-fields involved grow at least as fast as
$\sqrt N$, by Theorem \ref{Th:J}, the Igusa class polynomials involved will be 
of degree at least $N^{1/8 -\eps}$ by Proposition~\ref{prop:igusadegree}, so 
writing them down takes at least time $N^{1/8 -\eps}$.
\end{proof}

The lower bound in the corollary is rather weak, as it does not account for 
the size of the coefficients of Igusa polynomials. These also appear to grow as
a positive power of the discriminant~\cite{Streng}, but we have no good
\emph{proven} lower bounds for the total length of the coefficients. However,
the lower bound on the degree of the Igusa class polynomials that we do use 
shows that any algorithm that requires writing down even just the 
\emph{reduction} of an Igusa class polynomial modulo some auxiliary prime will 
necessarily take exponential time.

\section{Gluing elliptic curves}
\label{S:gluing}

\noindent
Our construction of genus-$2$ curves with a given number of points depends on
methods of constructing genus-$2$ curves that have Jacobians isogenous to a 
product of given elliptic curves.  In this section we will present two
algorithms for producing such curves. The first is simply an algorithmic 
description of an explicit construction given in~\cite{HLP}; the second is
based on an explicit construction given in Appendix~\ref{S:appendix}.

As is explained in~\cite{FK}, every genus-$2$ curve $C$ with a nonsimple 
Jacobian arises (perhaps in several ways) from specifying two elliptic curves
$E_1$ and $E_2$, an integer $n>1$, and an isomorphism  
$\psi\colon E_1[n]\to E_2[n]$ of the $n$-torsion subschemes of $E_1$ and $E_2$
that is an anti-isometry with respect to the Weil pairing.  More precisely, 
there is an isogeny $\varphi$ from $E_1\times E_2$ to the Jacobian $\Jac C$ of
$C$ whose kernel is the graph of the isomorphism~$\psi$, and the pullback via 
$\varphi$ of the canonical polarization of $\Jac C$ is equal to $n$ times the
product polarization on $E_1\times E_2$.  In this situation, we say that 
$\Jac C$ (or, by an abuse of language that we will find convenient, $C$ itself)
is obtained by \emph{gluing} $E_1$ and $E_2$ together along their $n$-torsion 
subgroups via~$\psi$.

The relationship between $C$ and $E_1$ and $E_2$ can also be summarized by
saying that there are minimal degree-$n$ maps $\varphi_i\colon C\to E_i$ such
that $\varphi_{2*}\varphi_1^{*} = 0$; here \emph{minimal} means that 
$\varphi_i$ does not factor through a nontrivial isogeny.  Given $E_1$, $E_2$,
$n$, $\psi$, $C$, and $\varphi$ from the preceding paragraph, one obtains
$\varphi_i$ by composing an embedding of $C$ into its Jacobian with the dual
isogeny $\hat{\varphi}\colon \Jac C\to E_1\times E_2$, and then projecting
$E_1\times E_2$ onto~$E_i$.  Conversely, given minimal maps $\varphi_1$ and 
$\varphi_2$, one takes $\varphi$ to be the degree-$n^2$ isogeny 
\[
\varphi_1^*\times\varphi_2^* \colon E_1\times E_2\to \Jac C,
\]
and notes that the kernel of $\varphi$ is the graph of an anti-isometry 
$\psi\colon E_1[n]\to E_2[n]$.

Over the complex numbers, the full family of genus-$2$ curves arising from the
case $n=2$ was given in 1832 by 
Jacobi~\citelist{\cite{Jacobi} 
                 \cite{Jacobi:Werke}*{Volume~I, pp.~373--382}}
as a postscript to his review of Legendre's 
\emph{Trait\'e des fonctions elliptiques}~\cite{Legendre}, and in 1885
Goursat~\cite{Goursat}*{Exemple~II, pp.~155--157} gave a family for $n=3$
that misses only a single curve.  We will use more recent references because we
must keep track of fields of definition, but the formulas we use can be traced
back to these early works.

For our intended applications we will be concerned only with the case of curves
over finite fields, but the algorithms will work --- and will run in 
polynomial time --- over any field $k$ in which elements can be precisely 
specified and in which arithmetic can be done in polynomial time. We will use 
the term \emph{computationally amenable} to describe such fields~$k$. It is
easy to see that any finitely-generated extension of a prime field is 
computationally amenable; the principal examples of such fields that we will 
have in mind are finite fields and number fields.

In fact, our gluing algorithms are based on solving systems of polynomial
equations, so the constructions underlying them work over other fields as well;
for example, the complex numbers.  We could phrase almost all of our results in
terms that Jacobi, Legendre, and Goursat would be familiar with, but since we
do want to speak about polynomial time algorithms, we restrict ourselves to 
computationally amenable fields.

First we give an algorithm that produces the list of all genus-$2$ curves that
can be obtained by gluing two given elliptic curves along their $2$-torsion
subgroups; the algorithm is essentially nothing more than a restatement 
of~\cite{HLP}*{Proposition~4}.  The statement of the algorithm is simplified by
the following notation:

Suppose $\alpha_1, \alpha_2, \alpha_3, \beta_1, \beta_2, \beta_3$ are elements 
of a field $\ell$ of characteristic not~$2$. Let $f$ and $g$ be the monic cubic
polynomials whose roots are the $\alpha_i$ and the $\beta_i$, respectively.  
Suppose further that $f$ and $g$ are separable and that the quantity
  $ \alpha_1(\beta_3 - \beta_2) 
  + \alpha_2(\beta_1 - \beta_3) 
  + \alpha_3(\beta_2 - \beta_1)$
is nonzero.
Set $\alpha_{ij} = \alpha_i - \alpha_j$ and $\beta_{ij} = \beta_i - \beta_j$, 
and define
\begin{align*}
a_1 &= \alpha_{32}^2/\beta_{32} + \alpha_{21}^2/\beta_{21} + \alpha_{13}^2/\beta_{13}, &
a_2 &= \alpha_1 \beta_{32} + \alpha_2 \beta_{13} + \alpha_3 \beta_{21}, \\
b_1 &= \beta_{32}^2/\alpha_{32} + \beta_{21}^2/\alpha_{21} + \beta_{13}^2/\alpha_{13}, &
b_2 &= \beta_1 \alpha_{32} + \beta_2 \alpha_{13} +  \beta_3\alpha_{21}.
\end{align*}
Let $A=\Delta_g a_1/a_2$ and $B=\Delta_f b_1/b_2$, where $\Delta_f$ and 
$\Delta_g$ are the discriminants of $f$ and $g$, respectively.  Then we let
$h_{\alpha_1,\alpha_2,\alpha_3,\beta_1,\beta_2,\beta_3}$ be the polynomial
\[
  - (A \alpha_{21} \alpha_{13} x^2 + B \beta_{21} \beta_{13})
    (A \alpha_{32} \alpha_{21} x^2 + B \beta_{32} \beta_{21})
    (A \alpha_{13} \alpha_{32} x^2 + B \beta_{13} \beta_{32}).
\]

\begin{algorithm}
\label{A:glue2}
\begin{algtop}
\algin  Weierstrass models of two elliptic curves $E_1$ and $E_2$ over a 
        computationally amenable field $k$ of characteristic not $2$.
\algout The set of genus-$2$ curves $C$ over $k$ such that there are
        degree-$2$ maps $\varphi_i\colon C\to E_i$ for $i=1$ and $i=2$ with 
        $\varphi_{2*}\varphi_1^* = 0$.
\end{algtop}
\begin{alglist}
\item   Initialize $L$ to be the empty list.
\item   \label{TWOstep1}
        Write $E_1$ and $E_2$ as $y^2 = f$ and $y^2 = g$, respectively, where 
        $f$ and $g$ are separable monic cubic polynomials in $k[x]$.  Let 
        $\Delta_f$ and $\Delta_g$ denote the discriminants of $f$ and~$g$.
\item   Compute the splitting fields of $f$ and $g$.  If the two fields are not
        isomorphic to one another as extensions of~$k$, output the empty set 
        and stop.  Otherwise, set $\ell$ to be the splitting field of $f$ 
        and~$g$.
\item   Compute the roots $\alpha_1, \alpha_2, \alpha_3$ of $f$ and 
        $\gamma_1, \gamma_2, \gamma_3$ of $g$ in~$\ell$.
\item   \label{TWOstep3}
        For every permutation $\sigma$ of the set $\{1,2,3\}$, do the 
        following:
        \begin{algsublist}
        \item Set $\beta_i = \gamma_{\sigma(i)}$ for each $i$.
        \item If the quantity
              $  \alpha_1(\beta_3 - \beta_2) 
               + \alpha_2(\beta_1 - \beta_3) 
               + \alpha_3(\beta_2 - \beta_1)$
              is nonzero, and if the map 
              $\psi\colon E_1[2](\ell) \to E_2[2](\ell)$ defined by 
              $(\alpha_i, 0)\mapsto(\beta_i,0)$ is Galois-equivariant,
              append the triple $(\beta_1,\beta_2,\beta_3)$ to $L$.
        \end{algsublist}
\item  Output the set of all curves 
       \[ y^2 = h_{\alpha_1,\alpha_2,\alpha_3,\beta_1,\beta_2,\beta_3} \]
       for all triples $(\beta_1,\beta_2,\beta_3)$ in $L$.
\end{alglist}
\end{algorithm}

\begin{theorem}
\label{T:glue2}
Algorithm~\textup{\ref{A:glue2}} runs in expected polynomial time
and produces correct output. The output list will be nonempty if and only if 
there is an isomorphism $E_1[2]\to E_2[2]$ of group schemes over $k$ that is 
not the restriction to $E_1[2]$ of a geometric isomorphism $E_1\to E_2$.
\end{theorem}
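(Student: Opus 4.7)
The plan is to split the theorem into four claims: (i) the polynomial $h_{\alpha_1,\ldots,\beta_3}$ used in the output step really does glue $E_1$ and $E_2$ along the specified $2$-torsion isomorphism; (ii) every pair $(\varphi_1,\varphi_2)$ of degree-$2$ maps $C\to E_i$ with $\varphi_{2*}\varphi_1^{*}=0$ is produced by the enumeration; (iii) each step runs in expected polynomial time; and (iv) the nonemptiness criterion matches the statement. For (i) I would simply invoke \cite{HLP}*{Proposition~4}, which says precisely that when the nondegeneracy condition in step~\ref{TWOstep3} holds, $y^2=h_{\alpha_1,\ldots,\beta_3}$ is a smooth genus-$2$ curve whose Jacobian glues $E_1$ and $E_2$ via $(\alpha_i,0)\leftrightarrow(\beta_i,0)$ and which admits the two required degree-$2$ maps.

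For (ii) I would appeal to the bijection recalled at the start of Section~\ref{S:gluing}: such gluings correspond to Galois-equivariant anti-isometric group-scheme isomorphisms $\psi\colon E_1[2]\to E_2[2]$. The simplification that is special to $n=2$ is that the Weil pairing takes values in $\mu_2=\{\pm 1\}$, where inversion is trivial, so the anti-isometry condition is automatic. Every admissible $\psi$ is therefore given by a permutation $\sigma\in S_3$ of the three nontrivial $2$-torsion points, and the algorithm enumerates all six, retaining exactly those for which the induced $\psi$ is Galois-equivariant and the gluing is nondegenerate.

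For (iii), the main loop has only six iterations and each step consists of finitely many operations in the splitting field $\ell$ of $f$ and $g$, which has degree at most $6$ over $k$. Over a computationally amenable field, constructing and comparing these splitting fields, finding the roots of $f$ and $g$, and verifying Galois-equivariance of each candidate bijection all reduce to standard algorithms that run in expected polynomial time (probabilistic root-finding over finite fields, deterministic factorization over number fields, and so on); assembling $h$ from the $\alpha_i,\beta_i$ is then a bounded symbolic computation.

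For (iv), the crucial calculation is that the quantity $\alpha_1(\beta_3-\beta_2)+\alpha_2(\beta_1-\beta_3)+\alpha_3(\beta_2-\beta_1)$ vanishes precisely when the assignment $\alpha_i\mapsto\beta_i$ extends to an affine substitution $x\mapsto sx+t$ with $s,t\in k$; such a substitution, combined with a scaling of $y$, lifts to a geometric isomorphism $E_1\to E_2$ (defined over $k(\sqrt{s})\subseteq\bar k$) whose restriction to $2$-torsion is $\psi$, and conversely every geometric isomorphism of the given Weierstrass models has this shape. Hence the output is empty exactly when every Galois-equivariant $\psi\colon E_1[2]\to E_2[2]$ is the restriction of a geometric isomorphism $E_1\to E_2$, which is the stated criterion. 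I expect the identification of `nondegeneracy' with `not geometric' to be the one step requiring genuine care; everything else is bookkeeping around \cite{HLP}*{Proposition~4} and the general correspondence between gluings and anti-isometries.
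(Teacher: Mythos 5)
Your proposal is correct and follows essentially the same route as the paper's proof: the paper also reduces the nondegeneracy condition in Step~5(b) to the vanishing of the $3\times 3$ determinant $\left|\begin{smallmatrix}\alpha_1&\beta_1&1\\\alpha_2&\beta_2&1\\\alpha_3&\beta_3&1\end{smallmatrix}\right|$, identifies that vanishing with the existence of an affine map $\alpha_i\mapsto\beta_i$ (hence a geometric Weierstrass isomorphism $E_1\to E_2$), and then cites \cite{HLP}*{Propositions~3 and~4} for the correctness of the gluing formula and the bijection with $2$-torsion isomorphisms. Your added remark that the Weil pairing into $\mu_2$ makes the anti-isometry condition automatic for $n=2$ is a nice clarification of why the algorithm need not check it, but the overall decomposition and key steps coincide with the paper's argument.
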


\begin{proof}
It is clear that the algorithm runs in expected polynomial time.  
To show that the output is correct, we must analyze the condition from 
Step~\ref{TWOstep3}(b) that the quantity 
$  \alpha_1(\beta_3 - \beta_2) 
 + \alpha_2(\beta_1 - \beta_3) 
 + \alpha_3(\beta_2 - \beta_1)$
be nonzero.  Note that  this quantity is equal to the determinant
\[
\left| \begin{matrix}
\alpha_1 & \beta_1 & 1 \\
\alpha_2 & \beta_2 & 1 \\
\alpha_3 & \beta_3 & 1 
\end{matrix} \right|,
\]
so it is nonzero if and only if there is no affine transformation taking the 
$\alpha_i$ to the~$\beta_i$, which is equivalent to the condition that the map 
$\psi\colon E_1[2](\ell)\to E_2[2](\ell)$ from Step~\ref{TWOstep3}(b) is not
the restriction to $E_1[2](\ell)$ of a geometric isomorphism $E_1 \to E_2$. 
Thus, in Step~\ref{TWOstep3}, the algorithm enumerates all isomorphisms
$E_1[2]\to E_2[2]$ of group schemes over $k$ that do not come from geometric
isomorphisms $E_1\to E_2$. The correctness of the output then follows
from~\cite{HLP}*{Propositions~3 and~4}.
\end{proof}

\begin{remark}
Suppose we write $E_1$ and $E_2$ in the form $y^2 = f$ and $y^2 = g$ for 
separable monic cubic polynomials $f$ and $g$ in~$k[x]$.  Since the 
characteristic of $k$ is not $2$, giving an isomorphism between the 
$2$-torsion group schemes $E_1[2]$ and $E_2[2]$ over $k$ is equivalent to 
giving a Galois-equivariant bijection between the points of order $2$ on $E_1$
and the points of order $2$ on $E_2$. To give such a bijection, one simply
needs to give a Galois-equivariant bijection between the roots of $f$ and the
roots of~$g$.  Such a bijection exists if and only if the splitting fields of 
$f$ and $g$ are isomorphic to one another as extensions of~$k$. When $k$ is 
finite, these splitting fields will be isomorphic to one another if and only if
$E_1$ and $E_2$ have the same number of $k$-rational points of order~$2$. Thus,
if Algorithm~\ref{A:glue2} is given two elliptic curves over a finite field
that have the same number of rational $2$-torsion points and that have 
different $j$-invariants, the output set will be nonempty.
\end{remark}

Next we give an algorithm for gluing two elliptic curves together along their
$3$-torsion subgroups.  As was the case for the preceding algorithm and its 
proof, it will be convenient to have some explicit formulas for the family of
genus-$2$ curves obtained by such $3$-gluings.  Such formulas (for part or all
of the family of such curves) have appeared in the literature, going back at
least to 1876 (see, for 
example,~\cites{Hermite,Goursat,Kuhn,Shaska}), but none of the
references we have found have all of the information we would like to have 
about this family of curves.  However, using these references, we were able to
work out all of the desired details; we have collected our results in 
Appendix~\ref{S:appendix}.

Some notation will be helpful:  If $k$ is a field, let $k^*$ act on the set of
quadruples $(a,b,c,d)\in k^4$ by setting
\[
  \lambda(a,b,c,d) = 
  (\lambda^2 a, \lambda^3 b, \lambda^{-2} c, \lambda^{-3} d)
\]
for $\lambda\in k^*$, and denote the orbit of $(a,b,c,d)$ under this action by 
$[a\col b\col c\col d]$.  We denote the set of these orbits by $P_k$.

\begin{algorithm}
\label{A:glue3}
\begin{algtop}
\algin  Weierstrass models of two elliptic curves $E_1$ and $E_2$ over a 
        computationally amenable field $k$ of characteristic neither $2$ 
        nor~$3$.
\algout The set of genus-$2$ curves $C$ over $k$ such that there are degree-$3$
        maps $\varphi_i\colon C\to E_i$ for $i=1$ and $i=2$ with 
        $\varphi_{2*}\varphi_1^* = 0$.
\end{algtop}
\begin{alglist}
\item   Initialize $L$ to be the empty list.
\item   \label{THREEequations}
        Let $j_1$ and $j_2$ be the $j$-invariants of $E_1$ and $E_2$, and 
        define elements of the polynomial ring $k[w,x,y,z]$ as follows:
        \begin{align*}
         g_1 &= 1728 (w^2 y + 4 w x z - 4 x^2 y^2)^3 - j_1 (w^3 + x^2)^2 (y^3 + z^2),\\
         g_2 &= 1728 (w y^2 + 4 x y z - 4 w^2 z^2)^3 - j_2 (w^3 + x^2) (y^3 + z^2)^2,\\
         g_3 &= 12 w y + 16 x z - 1.
        \end{align*}
\item   \label{THREEinvariants}
        Find all elements $[a\col b\col c\col d]$ of $P_k$ that satisfy $g_1$, 
        $g_2$, and $g_3$, and such that either $ab\ne 0$ or $cd\ne 0$.
\item   \label{THREEtwists}
        For every $p\in P_k$ from Step~\ref{THREEinvariants}:
        \begin{algsublist}
        \item Choose $a,b,c,d\in k$ such that $p = [a\col b\col c\col d]$.
        \item If $(a^3 + b^2)(c^3 + d^2) \neq 0$, compute representatives 
              $t\in k^*$ of the elements of the (finite and possibly empty) set
              $S\subseteq k^*/k^{*2}$ such that the curves $E_{a,b,c,d,t,1}$
              and $E_{a,b,c,d,t,2}$ from Appendix~\ref{S:appendix} are 
              isomorphic to $E_1$ and~$E_2$. For each such $t$, append the 
              quintuple $(a,b,c,d,t)$ to the list $L$. 
        \end{algsublist}
\item   \label{THREEstepj0}
        If $j_1 = j_2 = 0$, write $E_1$ in the form $v^2 = u^3 + e_1$ and $E_2$
        in the form $v^2 = u^3 + e_2$, with $e_1, e_2\in k$.  If 
        $e_1 e_2\in 4 k^{*6}$, set $b= e_1$ and $d = 1/(16 e_1)$, and append
        the quintuple $(0,b,0,d,2)$ to the list~$L$.
\item   \label{THREEstepj1728}
        If $j_1 = j_2 = 1728$, write $E_1$ in the form $v^2 = u^3 + e_1 u$ and 
        $E_2$ in the form $v^2 = u^3 + e_2 u$, with $e_1, e_2\in k$.  If 
        $e_1 e_2\in 108 k^{*4}$, set $a = e_1$ and $c = 1/(12 e_1)$, and 
        append the quintuple $(a,0,c,0,2)$ to the list $L$.
\item   If any quintuple in $L$ is equivalent to an earlier quintuple under the
        action of $k^*\times k^*$ described in Appendix~\ref{S:appendix},
        delete the later quintuple from~$L$.
\item   Output the set of all curves $C_{a,b,c,d,t}$ (from
        Appendix~\ref{S:appendix}) for $(a,b,c,d,t)\in L$.
\end{alglist}
\end{algorithm}

\begin{theorem}
\label{T:glue3}
Algorithm~\textup{\ref{A:glue3}} runs in expected polynomial time
and produces correct output. The output list will be nonempty if the following 
conditions are satisfied\textup{:} 
\begin{enumerate}
\item[(a)] there exists an isomorphism $E_1[3] \to E_2[3]$ of group schemes 
           that is an anti-isometry with respect to the Weil pairing\textup{;}
           and
\item[(b)] the curves $E_1$ and $E_2$ are not $2$-isogenous to one another over
           the algebraic closure of~$k$.
\end{enumerate}
\end{theorem}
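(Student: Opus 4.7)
The plan is to base the proof on the explicit parametrization developed in Appendix \ref{S:appendix}, which will assert that every genus-$2$ curve $C$ admitting a pair of degree-$3$ maps $\varphi_i\colon C\to E_i'$ with $\varphi_{2*}\varphi_1^* = 0$ is isomorphic to some $C_{a,b,c,d,t}$ in the family described there, with $E_i'\cong E_{a,b,c,d,t,i}$, and with the quintuple $(a,b,c,d,t)$ determined up to an explicit action of $k^*\times k^*$. Once this parametrization is in hand, the algorithmic task is simply to enumerate all quintuples corresponding to the input pair $(E_1,E_2)$.

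For correctness, I would first verify that the polynomials $g_1,g_2$ in Step~\ref{THREEequations} are constructed so that $g_1(a,b,c,d)=0$ encodes $j(E_{a,b,c,d,*,1})=j_1$ and $g_2(a,b,c,d)=0$ encodes $j(E_{a,b,c,d,*,2})=j_2$; both are homogeneous under the $\lambda$-action defining $P_k$, so their vanishing is well defined on orbits. The equation $g_3=0$ fixes a representative in each $k^*$-orbit, reducing Step~\ref{THREEinvariants} to a zero-dimensional system solvable in polynomial time via resultants or a Gr\"obner basis. The open condition $ab\ne 0$ or $cd\ne 0$ restricts to the generic stratum $(a^3+b^2)(c^3+d^2)\ne 0$; the complementary strata force $j_1$ or $j_2$ to equal $0$ or $1728$ and are handled by Steps~\ref{THREEstepj0} and~\ref{THREEstepj1728}, whose congruence conditions on $e_1e_2$ come directly from the twist theory for curves with $j\in\{0,1728\}$. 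Step~\ref{THREEtwists} then selects, for each admissible orbit, the finitely many parameters $t\in k^*/k^{*2}$ that make $E_{a,b,c,d,t,i}$ isomorphic to $E_i$ itself rather than merely $j$-equivalent; this is a finite and polynomial-time computation. The deduplication step finally removes the residual $k^*\times k^*$ action from the appendix.

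For the nonemptiness claim, I would invoke the Frey--Kani construction: under hypothesis~(a), the graph $\Gamma_\psi\subset E_1\times E_2$ of the anti-isometry $\psi$ is the kernel of a polarized isogeny $E_1\times E_2\to A$ with $A$ a principally polarized abelian surface whose pull-back polarization is three times the product polarization. Hypothesis~(b) excluding geometrically $2$-isogenous $E_1,E_2$ is exactly what rules out $A$ being a polarized product of elliptic curves, so $A$ is the Jacobian of a smooth genus-$2$ curve $C/k$ equipped with the required pair of degree-$3$ maps. By the parametrization, $C$ arises as some $C_{a,b,c,d,t}$, whose defining quintuple will be found by Steps~\ref{THREEinvariants}--\ref{THREEtwists} or, in the degenerate cases, by Steps~\ref{THREEstepj0}--\ref{THREEstepj1728}.

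The main obstacle will be establishing the underlying parametrization in Appendix~\ref{S:appendix}; in particular, verifying that the family $C_{a,b,c,d,t}$ with its associated pairs of elliptic curves and degree-$3$ maps is exhaustive, that the action on $(a,b,c,d,t)$ exactly captures all equivalences between quintuples, and that the two strata $j\in\{0,1728\}$ genuinely require the separate treatment in Steps~\ref{THREEstepj0} and~\ref{THREEstepj1728}. The runtime bounds themselves are then routine: each polynomial system has bounded total degree, so all algebraic operations, twist tests, and deduplications cost time polynomial in the input size on any computationally amenable~$k$.
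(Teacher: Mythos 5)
Your proposal follows essentially the same route as the paper: take Proposition~\ref{P:3covers} from the appendix as the parametrization result, verify correctness by reading $g_1,g_2$ as the $j$-invariant equations~\eqref{EQ:j1}--\eqref{EQ:j2}, handle the twist selection and the degenerate $j\in\{0,1728\}$ strata via Steps~\ref{THREEtwists}--\ref{THREEstepj1728}, and prove nonemptiness via Frey--Kani~\cite{FK} combined with Kani's criterion for smoothness. One small but genuine error in your runtime argument: you assert that the equation $g_3 = 0$ (that is, $12ac + 16bd = 1$) ``fixes a representative in each $k^*$-orbit.'' It does not: under the $\lambda$-action, $ac$ and $bd$ are invariant, so $g_3$ is $\lambda$-invariant and imposes no normalization whatsoever --- it is simply one of the defining constraints on the family. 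The actual normalization used in the paper's proof, when $ab\ne 0$, is to scale so that $a=b$; this is what reduces the system $g_1=g_2=g_3=0$ to three equations in three unknowns. The paper then bounds the solutions by $24$ using Kani's Theorem~1, plus a single extra point $[-1/4\col 1/8\col -1\col -1]$ with $(a^3+b^2)(c^3+d^2)=0$, to conclude the resulting variety is $0$-dimensional, and hence that the computation runs in expected polynomial time. Your phrase ``bounded total degree'' alone does not establish $0$-dimensionality, so you would want to replace the $g_3$ normalization with the $a=b$ normalization and supply the Kani bound (or some substitute) to make the polynomial-time claim rigorous.
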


\begin{proof}
If we show that Step~\ref{THREEinvariants} and Step~\ref{THREEtwists}(b) can be
completed in expected polynomial time, it will be clear that the
entire algorithm runs in expected polynomial time.

To begin, we note that an easy calculation shows that 
$[-1/4\col 1/8\col -1\col -1]$ is the only element 
$[a\col b\col c\col d]$ of $P_k$ that satisfies $g_1 = g_2 = g_3 = 0$ and for 
which $(a^3 + b^2)(c^3 + d^2) = 0$.  

Suppose $p = [a\col b\col c\col d]$ is an element of $P_k$ such that 
$g_1(p) = g_2(p) = g_3(p) = 0$ and such that $ab\ne 0$. Since $ab\ne 0$, there
is a unique representative for $p$ such that $a=b$.  With this normalization, 
we find that $g_1 = g_2 = g_3 = 0$ becomes a system of three equations in three
unknowns $a$, $c$, and~$d$.  Proposition~\ref{P:3covers} from the Appendix
shows that every solution to this system over $\kb$ with 
$(a^3 + a^2)(c^3 + d^2)\neq 0$ gives a genus-$2$ curve $C_{a,a,c,d,1}$ over 
$\kb$ along with degree-$3$ maps $\varphi_{a,a,c,d,1,1}$ and 
$\varphi_{a,a,c,d,1,2}$ to the elliptic curves over $\kb$ with $j$-invariants
equal to $j_1$ and $j_2$, and distinct solutions over $\kb$ give rise to
nonisomorphic triples $(C,\varphi_1,\varphi_2)$.  There are at most $24$ such 
triples~\cite{Kani}*{Theorem~1}, so there are at most $24$ solutions to the 
system over $\kb$ with $(a^3 + a^2)(c^3 + d^2)\neq 0$.  As we noted above,
there is only one solution with $(a^3 + a^2)(c^3 + d^2) = 0$.  Therefore, the
variety determined by $g_1 = g_2 = g_3 = 0$ and $a = b$ is $0$-dimensional, and
computing its points over $k$ is an expected polynomial-time 
computation.

Likewise, in expected polynomial time one can compute the points
$p = [a\col b\col c\col d]$ of $P_k$ such that $g_1(p) = g_2(p) = g_3(p) = 0$ 
and such that $cd\ne 0$.  Thus, Step~\ref{THREEinvariants} can be completed in 
expected polynomial time.

To show that Step~\ref{THREEtwists}(b) runs in expected polynomial
time, we note that over a field $k$ of characteristic neither $2$ nor $3$, it 
is easy to determine the set of $t\in k^*$ (modulo $k^{*2}$) such that the 
quadratic twist of one elliptic curve by $t$ is isomorphic to a second elliptic
curve: One simply writes the two curves in short Weierstrass form as
$y^2 = x^3 + Ax + B$ and $y^2 = x^3 + A'x + B'$, and computes the set of 
$t\in k^*$ such that $A' = At^2$ and $B' = Bt^3$. Finding these $t$ can clearly
be done in expected polynomial time. There is at most one solution 
$t$ to these equations, unless $A = A' = 0$ or $B = B' = 0$.  If $A = A' = 0$ 
the solutions, if any, all lie in the same class of $k^*/k^{*2}$.  If 
$B = B' = 0$ there are either $0$ or $2$ solutions; if there are $2$ solutions,
they lie in the same class of $k^*/k^{*2}$ if and only if $-1$ is a square 
in~$k$. Thus, Step~\ref{THREEtwists}(b), and hence the entire algorithm, runs 
in expected polynomial time.

Next we must show that the output of the algorithm is correct. We see from 
Proposition~\ref{P:3covers} that the set we intend the algorithm to output is 
equal to the set of all curves $C_{a,b,c,d,t}$ with $(a^3 + b^2)(c^3+d^2)t\ne0$ 
and $12ac + 16ad = 1$ such that for each~$i$, we have 
$E_{a,b,c,d,t,i} \cong E_i$.

Certainly every curve $C_{a,b,c,d,t}$ in the set output by the algorithm
satisfies $(a^3 + b^2)(c^3+d^2)t\ne0$ and $12ac + 16bd = 1$, and has the 
property that $E_{a,b,c,d,t,i} \cong E_i$ for each~$i$; 
Step~\ref{THREEtwists}(b) explicitly enforces these requirements, and an easy
calculation shows that the curves (if any) obtained from 
Steps~\ref{THREEstepj0} and~\ref{THREEstepj1728} also have these properties.

On the other hand, suppose $(a,b,c,d,t)$ is a quintuple such that 
$(a^3 + b^2)(c^3+d^2)t\ne0$ and $12ac + 16bd = 1$ and such that 
$E_{a,b,c,d,t,i} \cong E_i$ for each~$i$.  We see from equations \eqref{EQ:j1}
and \eqref{EQ:j2} that since $(a^3 + b^2)(c^3+d^2)t\ne0$, the equations $g_1$ 
and $g_2$ in Step~\ref{THREEequations} express the condition that the elliptic 
curves $E_{a,b,c,d,t,i}$ and $E_i$ have the same $j$-invariant, for $i=1$ and
$i=2$.  Thus, Steps~\ref{THREEinvariants} and~\ref{THREEtwists} ensure that
the algorithm will find $(a,b,c,d,t)$ (up to the action of $k^*\times k^*$) if
$ab\ne 0$ or $cd\ne 0$.

Suppose our quintuple $(a,b,c,d,t)$ has $ab=0$ and $cd=0$.  We see from the
condition that $12ac + 16bd = 1$ that then either $a=c=0$ or $b=d=0$.  If
$a=c=0$, then equations \eqref{EQ:j1} and \eqref{EQ:j2} show that $j_1=j_2=0$,
and we find that $E_1$ and $E_2$ must be isomorphic to the curves
\[
t y^2 = x^3 + 512 b^4 d^3 \quad\text{and}\quad
t y^2 = x^3 + 512 b^3 d^4,
\]
respectively.  Using the condition that $16bd = 1$ and rescaling the variables 
$x$ and $y$, we find that $E_1$ and $E_2$ are isomorphic to 
\[
y^2 = x^3 +  b t^3 / 8 \quad\text{and}\quad
y^2 = x^3 +  d t^3 / 8.
\]
Thus, the $e_1$ and $e_2$ from Step~\ref{THREEstepj0} must satisfy 
$e_1 = b t^3 r^6 / 8$ and $e_2 = d t^3 s^6/8$ for some $r,s\in k^*$. It follows
that $e_1 e_2\in 4 k^{*6}$.  Furthermore, given any $e_1$ and $e_2$ with 
$e_1 e_2\in 4 k^{*6}$, if we take $b = e_1$, $d = 1/(16e_1)$, and $t=2$, then 
$E_{0,b,0,d,t,i} \cong E_i$ for each~$i$.

On the other hand, if $b=d=0$, then equations \eqref{EQ:j1} and \eqref{EQ:j2} 
show that $j_1=j_2=1728$, and we find that $E_1$ and $E_2$ must be isomorphic
to the curves
\[
t y^2 = x^3 + 36 a^3 c^2 \quad\text{and}\quad
t y^2 = x^3 + 36 a^2 c^3,
\]
respectively.  Using the condition that $12ac = 1$ and rescaling the variables 
$x$ and $y$, we find that $E_1$ and $E_2$ are isomorphic to 
\[
y^2 = x^3 +  (a t^2 / 4)x\quad\text{and}\quad
y^2 = x^3 +  (c t^2 / 4)x.
\]
Thus, the $e_1$ and $e_2$ from Step~\ref{THREEstepj1728} must satisfy 
$e_1 = a t^2 r^4 / 4$ and $e_2 = c t^2 s^4 / 4$ for some $r,s\in k^*$.  It 
follows that $e_1 e_2\in 108 k^{*4}$.  Furthermore, given any $e_1$ and $e_2$
with $e_1 e_2\in 108 k^{*4}$, if we take $a = e_1$, $c = 1/(12e_1)$, and $t=2$,
then $E_{a,0,c,0,t,i} \cong E_i$ for each~$i$.

Thus, every curve $C$ in the list output by the algorithm does have degree-$3$
maps $\varphi_i\colon C\to E_i$ such that $\varphi_{2*}\varphi_1^* = 0$, so the
output is correct.

Now suppose conditions (a) and (b) of the theorem hold. Condition (a) says that
there is an isomorphism $\psi\colon E_1[3] \to E_2[3]$ of group schemes that is
an anti-isometry with respect to the Weil pairing.  As is explained 
in~\cite{FK}, associated to this data there is a (possibly singular) curve $C$
of arithmetic genus $2$, together with degree-$3$ maps 
$\varphi_1\colon C\to E_1$ and $\varphi_2\colon C\to E_2$ such that 
$\varphi_{2*}\varphi_1^* = 0$.  Combining condition (b) with a result of 
Kani~\cite{Kani}*{Theorem~3} we find that $C$ is in fact a nonsingular curve.
Thus $C$ will appear in the list output by the algorithm, so the list is
nonempty.
\end{proof}

\section{Genus-2 curves of given order}
\label{S:curves}

\noindent
In this section we prove Theorem~\ref{Th:C}.  Our strategy will be to look at
curves $C$ over finite prime fields $\FF_p$ such that the Jacobian $J$ of $C$
is isogenous to a product $E_1\times E_2$ of elliptic curves. As we noted in 
Equation~\eqref{EQ:traces} in Section~\ref{S:genus2}, if $E_1$ and $E_2$ have
traces $t_1$ and $t_2$, then $C$ will have $p + 1 - t_1 - t_2$ rational
points. Leaving aside for the moment the question of how to produce $C$ from
$E_1$ and $E_2$, we see that if we are given an integer $N$, we would like to 
produce a prime $p$ and two elliptic curves $E_1$ and $E_2$ over $\FF_p$ with
traces that sum to $p + 1 - N$.

In Section~\ref{S:intro} we noted the difficulty constructing an elliptic curve
over a given finite field with a given trace of Frobenius (Problem~A). However, 
there is an easy special case of this problem: Given a prime $p$, it is very 
easy to produce a supersingular elliptic curve over $\FF_p$ 
(see~\cite{Broker}), and for $p>3$ all such curves have trace~$0$.  We 
therefore use the following strategy for producing a curve $C$ with a given
number $N$ of points:
\begin{itemize}
\item Construct an elliptic curve $E_1$ over some prime field $\FF_p$ such that
      the trace $t_1$ of $E_1$ satisfies $t_1 = p + 1 - N$; that is, 
      $\#E_1(\FF_p) = N$.
\item Construct a supersingular curve $E_2$ over $\FF_p$, so that the trace 
      $t_2$ of $E_2$ satisfies $t_2 = 0$.
\item Construct a genus-$2$ curve $C$ over $\FF_p$ whose Jacobian is isogenous
      to $E_1\times E_2$.  Then from Equation~\eqref{EQ:traces} we find
      \[\#C(\FF_p) = p + 1 - t_1 - t_2 = N.\]
\end{itemize}

In order to obtain an actual algorithm from this outline, we begin with some 
results that help us produce elliptic curves to use as input data for
Algorithms~\ref{A:glue2} and~\ref{A:glue3}. In order to obtain one or more 
genus-$2$ curves from one of these algorithms, the two elliptic curves that are
input to the algorithm must have isomorphic $\ell$-torsion subgroup schemes, 
where $\ell=2$ for Algorithm~\ref{A:glue2} and $\ell=3$ for 
Algorithm~\ref{A:glue3}. Definition~\ref{D:minimal} (below) and the results
that follow it help us produce elliptic curves in a given isogeny class whose 
$\ell$-torsion subgroup schemes have a known structure.

Let $E$ be an elliptic curve over a finite field $k$ of cardinality $q$, and 
let $\pi$ denote the Frobenius endomorphism of $E$. Suppose that the
endomorphism ring of $E$ is an order in an imaginary quadratic field.  (This 
will be the case precisely when the endomorphism ring is commutative, and 
precisely when $\pi$ does not lie in~$\ZZ$.) Then the ring $\ZZ[\pi]$ is a 
subring of finite index in $\End E$.

\begin{definition}
\label{D:minimal}
Let $\ell$ be a prime.  The elliptic curve $E$ is \emph{minimal at $\ell$} if 
the index of $\ZZ[\pi]$ in $\End E$ is not divisible by $\ell$.
\end{definition}

Let $\Delta$ be the discriminant of $\End E$ and let $t$ be the trace of the 
Frobenius endomorphism of $E$, so that the discriminant of $\ZZ[\pi]$ is equal
to $t^2 - 4q$.  We see that $E$ is minimal at $\ell$ if and only if 
$(t^2 - 4q)/\Delta$ is not divisible by~$\ell$.

\begin{lemma}
\label{L:minimal}
Let $E$ be an elliptic curve over a finite field~$k$ whose endomorphism ring is
commutative, and let $\ell$ be a prime not equal to the characteristic of~$k$. 
Then $E$ is minimal at $\ell$ if and only if the number of $k$-rational 
rank-$\ell$ subgroup schemes of $E$ is less than $\ell+1$.
\end{lemma}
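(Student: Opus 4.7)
The plan is to count, for our elliptic curve $E$, the $k$-rational rank-$\ell$ subgroup schemes of $E$ and relate this count to the minimality condition. Since $\ell$ is coprime to the characteristic of $k$, the group scheme $E[\ell]$ is étale, so its $k$-subgroup schemes of rank $\ell$ correspond bijectively to the order-$\ell$ subgroups of $V := E[\ell](\kb) \cong \FF_\ell^2$ that are stable under the Frobenius $\pi$. These are the $\pi$-stable $\FF_\ell$-lines in $V$; there are $\ell+1$ lines in $V$, and the number fixed by $\pi$ equals $\ell+1$ when $\pi$ acts as an $\FF_\ell$-scalar on $V$, and at most~$2$ otherwise (namely, the rational eigenlines of $\pi$). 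Thus the count is less than $\ell+1$ precisely when $\pi$ does not act as a scalar on $V$, and the lemma will follow once we show that the latter happens exactly when $E$ is minimal at $\ell$.

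For this, set $\calO := \End E$. The natural map $\calO \to \End V$ has kernel $\ell\calO$: an endomorphism $\phi \in \calO$ kills $E[\ell]$ iff $\ker\phi \supset \ker[\ell]$, iff $\phi$ factors as $\phi = [\ell] \circ \psi$ for some $\psi \in \calO$, iff $\phi \in \ell\calO$. Consequently, $\pi$ acts as a scalar on $V$ if and only if there is an integer $c$ with $\pi \equiv c \pmod{\ell\calO}$.

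To finish, write $\calO = \ZZ[\omega]$ and expand $\pi = a + b\omega$ with $a,b \in \ZZ$. The existence of $c \in \ZZ$ with $\pi \equiv c \pmod{\ell\calO}$ is then equivalent to $\ell \mid b$. On the other hand, $\ZZ[\pi] = \ZZ + \ZZ\cdot b\omega$ as a sublattice of $\calO = \ZZ + \ZZ\omega$, so the index $[\calO : \ZZ[\pi]]$ equals $|b|$. Hence $\pi$ acts as a scalar on $V$ iff $\ell \mid [\calO : \ZZ[\pi]]$, iff $E$ fails to be minimal at $\ell$; combining this with the paragraph above yields the stated equivalence. The only mildly subtle step is the kernel computation for $\calO \to \End V$; the rest is elementary linear algebra over $\FF_\ell$.
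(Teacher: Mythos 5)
Your proof is correct and follows essentially the same path as the paper's: reduce to the count of $\pi$-stable lines in $E[\ell](\kb)$, observe this count is $\ell+1$ iff $\pi$ acts as a scalar, and translate scalarity into $\pi\in\ZZ+\ell\End E$, which is failure of minimality. The only difference is that you spell out the final equivalence $\pi\in\ZZ+\ell\End E\iff \ell\mid[\End E:\ZZ[\pi]]$ via a $\ZZ[\omega]$-basis computation, whereas the paper simply asserts it.
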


\begin{proof}
Let $V$ be the group $E[\ell](\kb)$, viewed as a $2$-dimensional 
$\FF_\ell$-vector space.  The Frobenius endomorphism $\pi$ of $E$ acts 
invertibly on $V$, so we can view it as an element $x$ of $\GL(V)$.  The
rank-$\ell$ subgroup schemes of $E$ correspond to $1$-dimensional eigenspaces
of~$x$, so there will be $\ell+1$ of these subgroup schemes when $x$ acts as a
scalar, and fewer than $\ell+1$ subgroup schemes otherwise. If $x$ acts as 
multiplication by an integer $a$, then $\pi-a$ kills all of $E[\ell]$, so the 
endomorphism $\pi-a$ of $E$ factors through multiplication by $\ell$, and 
$(\pi-a)/\ell$ is an endomorphism $\alpha$ of~$E$.  Conversely, if 
$(\pi-a)/\ell$ is an endomorphism of $E$, then $\pi$ acts as a scalar on 
$E[\ell]$.

Thus, there are $\ell+1$ $k$-rational rank-$\ell$ subgroup schemes of $E$ 
precisely when $\pi$ lies in $\ZZ + \ell\End E$, which is the case precisely 
when the index of $\ZZ[\pi]$ in $\End E$ is divisible by $\ell$.

(In the ordinary case, the lemma also follows 
from~\cite{FM}*{Theorem~2.1, p.~278}.)
\end{proof}

The next algorithm shows that it is easy to produce curves that are minimal at
a given prime.

\begin{algorithm}
\label{A:minimal}
\begin{algtop}
\algin  A triple $(E, H, \ell)$, where $E$ is an ordinary elliptic curve over a 
        finite field $k$ of characteristic greater than $3$ such that $\End E$ 
        is a maximal order in a quadratic field, where $H$ is the image in 
        $k[x]$ of the Hilbert class polynomial of this maximal order, and where 
        $\ell$ is an integer equal to either $2$ or $3$.
\algout An elliptic curve over $k$ that is isogenous to $E$ and that is minimal
        at~$\ell$.
\end{algtop}
\begin{alglist}
\item   If $E$ has fewer than $\ell+1$ subgroup schemes of rank~$\ell$, return
        $E$ and stop.
\item   Choose a rank-$\ell$ subgroup scheme $G$ of $E$ so that the
        $j$-invariant of the quotient curve $E/G$ is not a root of $H$.
\item   Set $E_0 = E$ and $E_1 = E/G$, and set $i = 1$.
\item   \label{MINIMALstep4}
        If $E_i$ has fewer than $\ell+1$ rank-$\ell$ subgroup schemes, return 
        $E_i$, and stop.
\item   Pick a rank-$\ell$ subgroup scheme $G_i$ of $E_i$ such that $E_i/G_i$ 
        is not isomorphic to $E_{i-1}$, and set $E_{i+1} = E_i / G_i$.
\item   Increment $i$, and go to Step~\ref{MINIMALstep4}.
\end{alglist}
\end{algorithm}

\begin{remark}
If we write an elliptic curve $E/k$ as $y^2 = x^3 + ax + b$, then the rank-$2$
subgroup schemes of $E$ correspond to the roots of $x^3 + ax + b$ in $k$; a
root $r$ corresponds to the rank-$2$ subgroup scheme $G$ that contains the 
point $(r,0)$, and the quotient $E/G$ can be written 
$y^2 = x^3 - (4 a + 15 r^2) x + (14 a r + 22 b)$.  The rank-$3$ subgroup 
schemes of $E$ correspond to the roots of the $3$-division polynomial 
$3x^4 + 6ax^2 + 12bx - a^2$; a root $r$ corresponds to the rank-$3$ subgroup
scheme $G$ that contains the two geometric points of $E$ with $x$-coordinate
equal to $r$, and the quotient $E/G$ can be written 
$x^3 - (9 a + 30 r^2) x - (42 a r + 27 b + 70 r^3)$.
\end{remark}

\begin{theorem}
Algorithm~\textup{\ref{A:minimal}} is correct, and runs in expected
polynomial time.
\end{theorem}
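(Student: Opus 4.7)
The plan is to interpret the algorithm as a descent through the $\ell$-isogeny volcano of $E$. To set things up, let $\calO_K = \End E$, let $f_\pi$ denote the conductor of $\ZZ[\pi]$ in $\calO_K$, and set $c = v_\ell(f_\pi)$. The standard structure theorem for the $\ell$-isogeny graph of ordinary elliptic curves says that the connected component containing $j(E)$ has a surface consisting of the curves with $\End = \calO_K$ --- equivalently, those whose $j$-invariant is a root of $H$ --- out of which trees of depth $c$ descend. A vertex at depth $d$ with $0 < d < c$ has one ascending edge and $\ell$ descending edges; a floor vertex ($d=c$) has only the ascending edge; a surface vertex in the case $c>0$ has $\ell+1$ edges, of which between $\ell-1$ and $\ell+1$ are descending, depending on whether $\ell$ is split, ramified, or inert in $\calO_K$.

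For correctness I would check each step against this picture. By Lemma~\ref{L:minimal}, the test in Steps~1 and~4 succeeds exactly on the minimal-at-$\ell$ curves: the floor vertices, together with the surface in the trivial case $c=0$. Hence any curve returned is minimal. If $c>0$ then the test fails at $E$, and Step~2 can indeed find a subgroup whose quotient is off the surface, because at least $\ell - 1 \geq 1$ of the $\ell+1$ subgroups at the surface are descending. At any subsequent interior vertex $E_i$ the one ascending edge returns to the parent $E_{i-1}$, and the key claim is that the condition $E_i/G_i \not\cong E_{i-1}$ in Step~5 rules out exactly this edge: curves at distinct depths have distinct endomorphism rings and hence distinct $j$-invariants, since the endomorphism ring of an ordinary elliptic curve over a finite field is a $\kb$-isomorphism invariant. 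Consequently the sequence $E_0, E_1, \ldots$ strictly descends the volcano and terminates at the floor after at most $c$ iterations, returning a minimal curve.

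For the running time, $c \leq v_\ell(f_\pi) \leq \tfrac{1}{2}\log_\ell|t^2 - 4q| = O(\log|k|)$, so the number of iterations is $O(\log|k|)$. Each iteration enumerates the rank-$\ell$ subgroups by root-finding a polynomial of degree at most $\ell+1$ over $k$ (the cubic $x^3 + ax + b$ for $\ell = 2$, or the degree-$4$ third-division polynomial for $\ell=3$), tests candidate quotients against the roots of $H$ or against the previous curve for isomorphism, and applies V\'elu's formulas to form the chosen quotient. Root-finding over a finite field is expected polynomial time (for instance, by Cantor--Zassenhaus), and the remaining operations are deterministic polynomial time, so the algorithm overall runs in expected polynomial time. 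The one point that needs to be argued carefully is the non-backtracking claim in Step~5, which, as noted above, rests on the fact that distinct volcano levels correspond to distinct $j$-invariants; once that is established, everything else is bookkeeping within the standard volcano picture.
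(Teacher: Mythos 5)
Your proof takes essentially the same approach as the paper's: both interpret the algorithm as a non-backtracking descent down the $\ell$-isogeny volcano, with the step count bounded by $v_\ell(t^2-4q) = O(\log|k|)$ and each step a polynomial-time root-finding and V\'elu computation. The paper simply cites the general volcano theory for the claim that the path never ascends, whereas you unpack it (distinct levels carry distinct endomorphism rings, hence distinct $j$-invariants, so the non-isomorphism test in Step~5 excludes exactly the ascending edge); this is a detail the paper leaves implicit but does not change the argument.
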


\begin{proof}
The algorithm follows a path, without backtracking, along the `isogeny volcano'
of $\ell$-isogenies~\cite{FM} (see also~\cite{Kohel}*{\S4.2}).  The curve $E_0$ 
is on the rim of the volcano, and the condition that the $j$-invariant of $E/G$
not be a root of $H$ ensures that $E_1$ is not on the rim of the volcano.
Therefore the isogeny $E_0\to E_1$ is `descending,' and the general theory 
shows all of the successive isogenies in the path are also descending.   The 
maximal number of steps on the descending path before an $\ell$-minimal curve
is reached is the $\ell$-adic valuation of the conductor of the order of 
discriminant $t^2 - 4q$, which is polynomial in the input size.
\end{proof}

\begin{remark}
In Algorithm~\ref{A:minimal}, we restrict $\ell$ to be $2$ or $3$ merely to 
avoid a discussion on the representation of subgroup schemes of larger rank.
\end{remark}

\begin{remark}
In general, one can easily produce an $\ell$-minimal curve isogenous to a
given~$E$, even when $\End E$ is not maximal and when no Hilbert class
polynomial is provided; one simply traverses three paths starting at $E$, but
with different first steps.  One of the paths is guaranteed to be descending. 
However, in our application we will have the Hilbert class polynomial at hand
anyway, so we give this slightly simpler algorithm.
\end{remark}

Now we reach the algorithm that we will use to prove Theorem~\ref{Th:C}.

\begin{algorithm}
\label{A:curve}
\begin{algtop}
\algin   A positive integer $N\not\equiv 1\bmod 6$ together with its 
         factorization.
\algout  A prime $p$ and a genus-$2$ curve $C$ over $\FF_p$ such that 
         $\#C(\FF_p) = N$, or the word `Failed'.
\end{algtop}
\begin{alglist}
\item    \label{CURVE-setell}
         If $N$ is even, set $\ell = 2$.  Otherwise, set $\ell = 3$.
\item    \label{CURVE-BS}
         Use the modified version of the algorithm of Br\"oker and 
         Stevenhagen~\cite{BS} discussed below in Remark~\ref{R:BS} to try to
         produce a fundamental discriminant~$\Delta$, the Hilbert class
         polynomial $H$ for~$\Delta$, a prime~$p>3$ congruent to $N-1$ 
         modulo~$\ell$, and an ordinary elliptic curve $E$ over $\FF_p$ with CM
         by $\Delta$ and with $\#E(\FF_p) = N$. If this step fails, output 
         `Failed' and stop.
\item    \label{CURVE-E1}
         Apply Algorithm~\ref{A:minimal} to $E$, $H$, and $\ell$ to find an 
         elliptic curve $E_1$ over $\FF_p$, isogenous to $E$, that is minimal 
         at~$\ell$.
\item    \label{CURVE-E2}
         Use the algorithm of Br\"oker~\cite{Broker} to produce a trace-$0$ 
         elliptic curve $E_2$ over~$\FF_p$.
\item    \label{CURVE-2}
         If $\ell = 2$ do the following:
         \begin{algsublist}
         \item If $E_2$ has three rational points of order~$2$, replace $E_2$
               by a $2$-isogenous curve that has only one rational point of 
               order~$2$.
         \item Apply Algorithm~\ref{A:glue2} to $E_1$ and $E_2$, choose a curve
               $C$ from the resulting list, output $p$ and $C$, and stop.
         \end{algsublist}
\item    \label{CURVE-3}
         If $\ell=3$ do the following:
         \begin{algsublist}
         \item Apply Algorithm~\ref{A:glue3} to the curves $E_1$ and $E_2$.
               If the algorithm returns a nonempty list of curves, choose a 
               curve $C$ from the list, output $p$ and $C$, and stop.
         \item Compute a curve $E_2'$ that is $2$-isogenous to~$E_2$.
         \item Apply Algorithm~\ref{A:glue3} to the curves $E_1$ and $E_2'$,
               choose a curve $C$ from the list returned by the algorithm,
               output $p$ and $C$, and stop.
         \end{algsublist}
\end{alglist}
\end{algorithm}

\begin{remark}
\label{R:BS}
Recall the outline of the Br\"oker--Stevenhagen algorithm~\cite{BS}*{p.~2168},
sketched in Section~\ref{S:genus1}: Given a positive integer $N$, together with
its factorization, the algorithm will produce a pair $(d,\nu)$, where $d$ is a
squarefree positive integer and $\nu$ is an integer of the field 
$\QQ(\sqrt{-d})$ such that $\nu$ has norm $N$ and $1-\nu$ has norm equal to a
prime. The algorithm runs by looking at each imaginary quadratic field $K$ in
turn, finding all integers $\nu\in K$ of norm $N$, and waiting until one of 
these $\nu$ satisfies the condition that the norm of $1-\nu$ is prime.

For Step~\ref{CURVE-BS} of Algorithm~\ref{A:curve}, we need to use a version of
the Br\"oker--Stevenhagen algorithm modified as follows: The input to the 
algorithm now includes an auxiliary prime $\ell$. As in the original algorithm,
we run through fields $K$ and integers $\nu$ of $K$ with norm $N$ until we find 
an $\nu$ such that the norm of $1-\nu$ is a prime $p$, but now we add in three
additional restrictions:
\begin{enumerate}
\item $p > 3$, 
\item $p \neq N-1$, and 
\item $p \equiv N-1\bmod \ell$.
\end{enumerate}
(Given the first condition, the second condition is equivalent to requiring the
algorithm to output an ordinary elliptic curve.) As we explained in the proof
of Lemma~\ref{L:BS}, this modified algorithm has a heuristic expected running
time polynomial in $\ell 2^{\omega(N)} \log N$.
\end{remark}

\begin{proof}[Proof of Theorem~\textup{\ref{Th:C}}]
We will prove Theorem~\ref{Th:C} by showing that Algorithm~\ref{A:curve} has 
the required properties.

Note that if there exist a prime $p$ with $p\equiv N-1\bmod \ell$ and an
ordinary elliptic curve $E$ over $\FF_p$ with $\#E(\FF_p)=N$, then there also
exists such a $p$ and $E$ with $p > 3$.  This is easy to check when $N<8$, and
when $N\ge 8$ the condition $p > 3$ follows from the condition that
$\#E(\FF_p)=N$.

As we discussed in Remark~\ref{R:BS}, the modified Br\"oker--Stevenhagen
algorithm will succeed in producing an ordinary elliptic curve $E$ over a prime 
field $\FF_p$ with $\#E(\FF_p)=N$ and with $p\equiv N-1\bmod \ell$, whenever 
such a curve exists.  Under standard heuristic assumptions, the modified 
algorithm runs in time polynomial in $\ell 2^{\omega(N)} \log N$, and since 
$\ell\le 3$ this is also polynomial in $2^{\omega(N)} \log N$. Thus, for the
rest of the proof, we may assume that Step~\ref{CURVE-BS} succeeds in producing
an $E$ over a prime field $\FF_p$ as above --- and, as we noted, we may also
assume $p > 3$. Since $E$ is ordinary, it cannot have trace~$0$.

Step~\ref{CURVE-E1} will run in expected polynomial time.

Br\"oker's algorithm will produce a supersingular curve $E_2$ over $\FF_p$, 
and will run in polynomial time if the Generalized Riemann Hypothesis is true.
Thus Step~\ref{CURVE-E2} will succeed in polynomial time under standard 
hypotheses.

Suppose $N$ is even. Let $\pi$ denote the Frobenius of $E_2$, so that 
$\pi^2 + p = 0$ and the index of the ring $\ZZ[\pi]$ in $\End E_2$ is either
$1$ or~$2$. It follows that every curve isogenous to $E_2$ that is not minimal
at~$2$ is $2$-isogenous to one that is. Since a curve of even order is minimal
at~$2$ if and only if it has just one rational point of order~$2$, we see that
Step~\ref{CURVE-2}(a) will succeed.

The curve $E_1$ from Step~\ref{CURVE-E1} is minimal at $2$ and has an even 
number $N$ of points, so it also has exactly one rational point of order~$2$.
Since $E_1$ and $E_2$ are defined over a finite field, it follows that the 
group schemes $E_1[2]$ and $E_2[2]$ are isomorphic to one another.  Also, since
$E_1$ is ordinary and $E_2$ is supersingular, the two curves have different 
$j$-invariants. Thus, in Step~\ref{CURVE-2}(b), Algorithm~\ref{A:glue2} will 
succeed in producing a genus-$2$ curve $C$ whose Jacobian is isogenous to
$E_1\times E_2$, so that $C$ will have $N$ points.

Finally, suppose we have reached Step~\ref{CURVE-3}, and suppose that in 
Step~\ref{CURVE-3}(a), Algorithm~\ref{A:glue3} fails to return a curve~$C$. 
According to Theorem~\ref{T:glue3}, this can only happen if there is \emph{not}
an anti-isometry $E_1[3]\to E_2[3]$, or if there \emph{is} a $2$-isogeny from 
$E_1$ to $E_2$ over the algebraic closure of the base field. However, since 
$E_1$ is ordinary and $E_2$ is supersingular, the two curves are not 
geometrically isogenous to one another, so there is no anti-isometry 
$E_1[3]\to E_2[3]$.

The curve $E_2$ has even order, so we can compute a $2$-isogenous curve $E_2'$ 
as required by Step~\ref{CURVE-3}(b). Recall that $E_1$ was constructed to be 
minimal at $3$, and note that $E_2$ is also minimal at $3$, because (as we 
noted earlier) the index of the ring $\ZZ[\pi]$ in $\End E_2$ is either $1$
or~$2$. Therefore we can apply \cite{HNR}*{Lemma~4.3, p.~249}, and we find that
either there is an anti-isometry $E_1[3]\to E_2[3]$ or there is an 
anti-isometry $E_1[3]\to E_2'[3]$.  Since there is not one from $E_1[3]$ to
$E_2[3]$, there must be one from $E_1[3]$ to $E_2'[3]$.  Combining this with
the fact that $E_1$ and $E_2'$ are not geometrically isogenous (because one
curve is ordinary and the other supersingular) and applying 
Theorem~\ref{T:glue3}, we find that Algorithm~\ref{A:glue3} applied to $E_1$
and $E_2'$ will produce at least one curve $C$.  Therefore, 
Step~\ref{CURVE-3}(c) will succeed.
\end{proof}

\section{Explicit examples}
\label{S:examples}

\noindent
We conclude by explicitly constructing several genus-2 curves having a 
prescribed large number of points.  The large numbers we chose for our examples
are $N_1=10^{2013}$ and $N_2 = 10^{2014} + 9703$, the smallest prime larger
than $10^{2014}$.  One of our examples we are able to specify completely here; 
the equations for the others can be found on the second author's web site, by 
starting at 

\centerline{\href{http://alumni.caltech.edu/~however/biblio.html}
                {\texttt{http://alumni.caltech.edu/{\lowtilde}however/biblio.html}}
}
\noindent
and following the link associated with this paper.  

\subsection*{A genus-2 curve of order \texorpdfstring{$10^{2013}$}{10 to the 2013}}

The first step in our construction is to produce an elliptic curve of order
$N_1=10^{2013}$.  As explained in \cite{BS}, elliptic curves of $10$-power 
order can often be constructed with endomorphism ring $\ZZ[i]$, the smallest 
imaginary quadratic order in which either $2$ or $5$ splits completely; the 
order $\ZZ[(-1+\sqrt{-31})/2]$ of discriminant $-31$, in which both $2$ and $5$
split completely, is expected to work in all cases.  We will show that both of
these orders can be used to produce a curve of order~$N_1$.

Let $i$ be a square root of $-1$, and take
\[
\nu = 2^{1006} \cdot 5^{164} \cdot (1 + i) \cdot (2 + i)^{1685}.
\]
Then $\Norm(\nu) = 10^{2013}$ and $p = \Norm(1-\nu)$ is prime, and the elliptic
curve $E\colon y^2 = x^3 - x$ over $\FF_p$ has $10^{2013}$ points. However, 
this curve is not minimal at~$2$; in fact, the large power of $2$ that appears 
in $\nu$ ensures that the index of $\ZZ[\pi]$ in $\End E$ is divisible by 
$2^{1006}$. Therefore, to find an isogenous curve $E_1$ that is minimal at~$2$,
we must travel $1006$ steps down a very tall isogeny volcano. This can be done 
without much trouble, but there is no clear way of expressing the $j$-invariant
of the resulting curve in a compact manner.

The prime $p$ is inert in the quadratic field of discriminant $-19$, so any 
curve over $\FF_p$ with CM by the order of discriminant $-19$ must be 
supersingular (and have trace $0$).  The Hilbert class polynomial for this 
discriminant is $x + 96^3$, so we can take $E_2$ to be any curve over $\FF_p$
with $j$-invariant $-96^3$.  The discriminant of the characteristic polynomial
of Frobenius for $E_2$ is~$-4p$, which is a fundamental discriminant because 
$p\equiv 1\bmod 4$.  It follows that $E_2$ is minimal at~$2$. Gluing $E_1$ and 
$E_2$ together along their $2$-torsion subgroups gives us a genus-$2$ curve $C$
over $\FF_p$ with $10^{2013}$ points.   

We chose our $E_2$ so that the curve $C$ that we obtained could be written
as $y^2 = x^6 + c_4 x^4 + c_2 x^2 + 1$, for certain $c_2,c_4$ in $K$. This 
curve has obvious maps to the elliptic curves
$y^2 =  x^3 + c_4 x^2 + c_2 x + 1$ and $y^2 = x^3 + c_2 x^2 + c_4 x + 1$.
At the URL mentioned above, we give the values of $c_2$ and $c_4$, as well as 
Magma code that shows that the two quotient elliptic curves have the number of
points that we claim.

\subsection*{Another genus-2 curve of order \texorpdfstring{$10^{2013}$}{10 to the 2013}}

To avoid the long chain of $2$-isogenies that the preceding construction 
required, we can replace the order $\ZZ[i]$ with an order in which $2$ splits,
and then require that $\nu$ not be divisible by many powers of~$2$.  (We will
have to take $\nu$ to be divisible by~$2$, in order for $1-\nu$ to have prime 
norm.) For this example, we use the order $\ZZ[\omega]$ of discriminant~$-31$,
where $\omega=(-1+\sqrt{-31})/2$. We find that the integer
\[
\nu =  2 (\omega - 1) \cdot 5^{322} \cdot (4\omega + 1)^{456} (\omega + 1)^{670}
\]
has norm $10^{2013}$, and $p = \Norm(1-\nu)$ is a $2014$-digit prime. If we 
then take $E$ to be the appropriate twist of an elliptic curve over $\FF_p$ 
whose $j$-invariant is a root of the Hilbert class polynomial for 
discriminant $-31$, we will have $\#E(\FF_p)=10^{2013}$.  For this~$E$, we need
take only one step down the isogeny volcano to find an isogenous curve $E_1$ 
that is minimal at~$2$.  Since $p\equiv 3 \bmod 4$, we can take $E_2$ to be the
curve $y^2 = x^3 + x$.  Gluing $E_1$ and $E_2$ together along their $2$-torsion
subgroups gives us a genus-$2$ curve $C$ over $\FF_p$ with $10^{2013}$ points.
Carrying out this procedure and cleaning up the resulting equations as much as
possible, we obtain the following result.

\begin{theorem}
\label{T:2014}
Let $p$ be the $2014$-digit prime specified in the preceding paragraph, and
let $u$ be any one of the three elements of $\FF_p$ that satisfies 
$u^3 + u + 1 = 0.$  Then the genus-$2$ curve $C/\FF_p$ defined by
\[
y^2 = (u - 1) (x^2 + 8) (x^4 + 16 x^2 + u^{24})
\]
has exactly $10^{2013}$ rational points.
\end{theorem}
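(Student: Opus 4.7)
The plan is to verify $\#C(\FF_p)=10^{2013}$ by exhibiting a $(2,2)$-decomposition of $\Jac C$ into two explicit elliptic curves and then point-counting on each factor. This is essentially a machine-checkable verification of the CM/gluing construction described in the paragraph preceding the theorem.

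First I would exploit the obvious involution $\sigma\colon(x,y)\mapsto(-x,y)$ on $C$, which commutes with the hyperelliptic involution~$\iota$. The two maps $(x,y)\mapsto(x^2,y)$ and $(x,y)\mapsto(1/x^2,y/x^3)$ identify $C/\sigma$ and $C/(\iota\sigma)$ with the elliptic curves
\begin{align*}
E_A &\colon Y^2=(u-1)(X+8)(X^2+16X+u^{24}),\\
E_B &\colon Y^2=(u-1)(1+8X)(1+16X+u^{24}X^2),
\end{align*}
which together furnish a $(2,2)$-isogeny $E_A\times E_B\to\Jac C$ defined over $\FF_p$. Consequently
\[
\#C(\FF_p)=\#E_A(\FF_p)+\#E_B(\FF_p)-(p+1),
\]
and it suffices to count points on each factor separately.

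The next step is to match $E_A$ and $E_B$ with the curves $E_1$ and $E_2$ produced by the construction in the preceding paragraph. After clearing the leading coefficient and rescaling, $E_B$ becomes a quadratic twist of $y^2=x^3+x$; since $p\equiv 3\pmod 4$, this curve has $j$-invariant $1728$ and is supersingular, so $\#E_B(\FF_p)=p+1$ regardless of which quadratic twist one obtains. For $E_A$ one must verify that it is $\FF_p$-isomorphic to the minimal-at-$2$ descent $E_1$ of the CM curve of discriminant $-31$ produced by the Br\"oker--Stevenhagen algorithm applied to $\nu$; once this is established, the CM construction immediately gives $\#E_A(\FF_p)=\Norm(1-\nu)=10^{2013}$, and substituting into the formula above yields $\#C(\FF_p)=10^{2013}+(p+1)-(p+1)=10^{2013}$.

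The main obstacle will be the identification of $E_A$ with $E_1$. Concretely one must check (a) that $j(E_A)$, computed as a rational function of $u$ and reduced modulo $(u^3+u+1,p)$, is a root of the Hilbert class polynomial $H_{-31}(x)\bmod p$; and (b) that the quadratic twist is the correct one, i.e.\ that the Frobenius trace of $E_A$ is $p+1-10^{2013}$ rather than its negative. The role of the cubic $u^3+u+1$, which has discriminant $-31$, is precisely to parametrize the Hilbert class field of $\ZZ[\omega]$: its three roots over $\FF_p$ correspond under an explicit rational identification to the three roots of $H_{-31}(x)\bmod p$, and the invariance of the statement under the choice of root $u$ reflects the fact that the three resulting curves $E_A$ all have the same number of $\FF_p$-points once the correct twist has been selected. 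In practice the verification of (a) and (b) is a symbolic calculation that is most naturally carried out with the accompanying Magma code; an alternative that bypasses the CM identification entirely is to run Schoof's algorithm directly on $E_A$, whose cost is polynomial in $\log p$.
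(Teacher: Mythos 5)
Your overall strategy — exhibit the obvious involution $\sigma\colon(x,y)\mapsto(-x,y)$, pass to the two elliptic quotients, and use $\#C(\FF_p)=\#E_A(\FF_p)+\#E_B(\FF_p)-(p+1)$ — is exactly the right way to verify the theorem, and it matches the structure of the paper's construction (the paper builds $C$ by gluing along $2$-torsion and defers verification to Magma code; your plan is precisely the point-count that such code would carry out). Two details need correction, however.

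First, you have the two quotients swapped. Expanding $E_A\colon Y^2=(u-1)(X+8)(X^2+16X+u^{24})$ and completing the cube with $X\mapsto X-8$ gives
\[
(u-1)\,Y^2=X^3+(u^{24}-64)X,
\]
so it is $E_A$, not $E_B$, that has $j$-invariant $1728$ and is the supersingular factor (since $p\equiv3\bmod4$). Consequently $E_B$ is the ordinary factor carrying the CM structure. This does not change the final count, since your formula is symmetric in $E_A$ and $E_B$, but as written your verification tasks (a) and (b) are attached to the wrong curve.

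Second, the ordinary quotient is not a root of $H_{-31}$. The paper's construction starts with a curve $E$ having CM by the maximal order of discriminant $-31$ and then descends one step on the $2$-isogeny volcano to obtain the curve $E_1$ that is minimal at $2$; it is $E_1$, not $E$, that enters the gluing. Thus the ordinary quotient has CM by the order of discriminant $-124$, and its $j$-invariant is a root of $H_{-124}\bmod p$ rather than $H_{-31}\bmod p$. Your field-theoretic intuition about $u^3+u+1$ (discriminant $-31$) cutting out the relevant ring class field is still sound, since $h(-31)=h(-124)=3$ and the two ring class fields coincide, but the polynomial whose root you would be checking is different. Your proposed fallback of applying Schoof's algorithm directly to the ordinary quotient sidesteps both of these slips and is what one would actually do in practice.
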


Magma code verifying this example can be found at the URL mentioned above.

\subsection*{A genus-2 curve of order \texorpdfstring{$10^{2014}+9703$}{10 to the 2014 plus 9703}}

Again, to produce a genus-$2$ curve of order $N_2 = 10^{2014}+9703$, our
algorithm requires that we start with an elliptic curve of order~$N_2$. The 
Br\"oker--Stevenhagen algorithm produced an elliptic curve $E_1$ over a field 
$\FF_p$, with $\End E_1$ the quadratic order of discriminant
$-96097\cdot 127363$.  Producing the curve $E_1$ required finding a root in
$\FF_p$ of a class polynomial for this quadratic order; we thank Andrew
Sutherland for carrying out the computation for us, using the methods 
of~\cite{Sutherland}.

Since $N_2$ is odd, we must take $\ell= 3$ in Algorithm~\ref{A:curve}.  We 
compute that the curve $E_1$ is minimal at $\ell$.

The prime $p$ is congruent to $-1$ modulo $7$, so $p$ is inert in the
quadratic field $\QQ(\sqrt{-7})$, and hence the elliptic curve $E_2$ over 
$\FF_p$ defined by $y^2 = x^3 - 35 x + 98$, which has CM by the order of 
discriminant $-7$, is supersingular and has trace~$0$. Applying 
Algorithm~\ref{A:glue3}, we find a genus-$2$ curve $C$ with degree-$3$ maps to
both $E_1$ and $E_2$, and this $C$ therefore has exactly $N_2$ rational points.
Equations for $E_1$ and $C$ can be found at the URL mentioned above.

Note that even though the input $N_2$ is a number that we did not \emph{prove}
to be prime, the output of our algorithm is correct if the input is; that is,
if $N_2$ is indeed prime.  Actually, the fact that Algorithm~\ref{A:curve}  
produces any output at all is already a strong probabilistic proof of the
primality of $N_2$, because the Br\"oker--Stevenhagen subroutine in 
Step~\ref{CURVE-BS} requires the computation of a large number of square roots
of potential discriminants $\Delta$ modulo $N_2$ in order to succeed.

\appendix
\section{Genus-2 triple covers of elliptic curves}
\label{S:appendix}

\noindent
As we noted in Section~\ref{S:gluing}, explicit families of genus-$2$ curves 
with degree-$3$ maps to elliptic curves appeared in the literature over $125$ 
years ago.  Indeed, in addition the family of curves given by 
Goursat~\cite{Goursat} in 1885, which includes every genus-$2$ curve over
$\CC$ with a degree-$3$ map to an elliptic curve with a single exception, there
is also an 1876 paper of Hermite~\cite{Hermite} that gives formulas for 
the $1$-parameter family of triple covers $C\to E$ over $\CC$ called `special'
by Kuhn~\cite{Kuhn} and `degenerate' by Shaska~\cite{Shaska}, and that includes
the curve missed by Goursat's family.

However, neither these $19$th century works nor their modern counterparts
provide exactly what we would like to have: a complete parametrization, over an
arbitrary base field $k$, of the family of genus-$2$ curves over $k$ that have 
$k$-rational degree-$3$ maps to elliptic curves, including formulas for the 
genus-$2$ curves, the associated elliptic curves, and the degree-$3$ maps.  In 
this appendix we provide such parameterizations, the sole restriction being 
that we assume the characteristic of $k$ is neither $2$ nor~$3$.  The family of
genus-$2$ curves we obtain is essentially identical to that of 
Goursat~\cite{Goursat}*{Exemple~II, pp.~155--157}.

\subsection{The parameterization}
We start by writing down a family of curves and maps.  Let $k$ be a field of 
characteristic neither $2$ nor $3$, and let $a,b,c,d,t$ be elements of $k$
satisfying
\begin{equation}
\label{EQ:conditions}
12 a c + 16 b d = 1, \qquad a^3 + b^2 \ne 0,
                     \qquad c^3 + d^2 \ne 0,
                     \qquad t \ne 0.
\end{equation}
Set $\Delta_1 = a^3 + b^2$ and $\Delta_2 = c^3 + d^2$, and define polynomials
$f, f_1, f_2$ by
\begin{align*}
f_{\ }   &= (x^3 + 3 a x + 2 b) (2 d x^3 + 3 c x^2 + 1),\\
f_1 &= x^3 + 12 ( 2 a^2 d - b c) x^2
           + 12 (16 a d^2 + 3 c^2) \Delta_1 x + 512 \Delta_1^2 d^3, \\
f_2 &= x^3 + 12 ( 2 b c^2 - a d) x^2
           + 12 (16 b^2 c \,+ 3 a^2) \Delta_2 x + 512 \Delta_2^2 b^3. 
\end{align*}
Further, define rational functions $u_1, v_1, u_2, v_2$ by
\begin{align*}
u_1 &= 12 \Delta_1 \, \frac{ - 2 d x + c }             {x^3 + 3 a x + 2 b},   &
v_1 &=    \Delta_1 \, \frac{ 16 d x^3 - 12 c x^2 - 1 }{(x^3 + 3 a x + 2 b)^2},\\
u_2 &= 12 \Delta_2 \, \frac{ x^2 (a x - 2 b)}      {2 d x^3 + 3 c x^2 + 1}, &
v_2 &=    \Delta_2 \, \frac{ x^3 + 12 a x - 16 b }{(2 d x^3 + 3 c x^2 + 1)^2}.
\end{align*}

The following lemma is purely computational, and we leave the proof to the 
reader and his or her computational algebra package.

\begin{lemma}
\label{L:maps}
The discriminants of $f$, $f_1$, and $f_2$ are
\begin{align*}
\Delta(f)_{\ } &= \phantom{-} 2^8 \, 3^{12}  \Delta_1^3    \Delta_2^3, \\
\Delta(f_1)    &=          -  2^2 \, 3^{3\ } \Delta_1^2    \Delta_2,   \\
\Delta(f_2)    &=          -  2^2 \, 3^{3\ } \Delta_1^{\ } \Delta_2^2.
\end{align*}
Furthermore, for each $i = 1,2$ there is a degree-$3$ morphism from the curve
$t\, y^2 = f$ to the curve $t\, y^2 = f_i$ given by sending $(x,y)$ to 
$(u_i, y v_i)$.
\end{lemma}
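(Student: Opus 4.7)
The statement is a purely computational one, and my plan is to verify each claim by direct polynomial manipulation, organizing the work so that the required identities take a recognizable form rather than leaving them as brute symbolic expansions.

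For the morphism claim, I set $P = x^3 + 3ax + 2b$ and $Q = 2dx^3 + 3cx^2 + 1$, so that $f = PQ$. The assertion that $(x,y) \mapsto (u_1, yv_1)$ maps $ty^2 = f$ to $ty^2 = f_1$ amounts to the polynomial identity $f_1(u_1) = v_1^2 \, f$. Writing $u_1 = 12\Delta_1 L_1 / P$ with $L_1 = -2dx + c$, and $v_1 = \Delta_1 N_1 / P^2$ with $N_1 = 16dx^3 - 12cx^2 - 1$, this identity is equivalent (after clearing denominators by $P^3$) to
\[
(12\Delta_1 L_1)^3 + 12(2a^2 d - bc)(12\Delta_1 L_1)^2 P + 144(16ad^2 + 3c^2)\Delta_1^2 L_1 P^2 + 512 \Delta_1^2 d^3 P^3 = \Delta_1^2 N_1^2 \, Q .
\]
Both sides are polynomials in $x$ over $\ZZ[a,b,c,d]$ of degree at most $9$. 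I would verify this by expanding both sides and comparing coefficients of $x^j$ for $j = 0, \dots, 9$; at each stage the coprimality relation $12ac + 16bd = 1$ from~\eqref{EQ:conditions} is used to simplify the expressions. The symmetric identity $f_2(u_2) = v_2^2 f$ follows from the observation that the whole setup is invariant under the involution $(a,b,c,d,x) \mapsto (c,d,a,b,1/x)$ combined with appropriate rescaling of $y$; more precisely, substituting $x \to 1/x$ in $PQ$ and clearing $x^6$ swaps the roles of the two cubic factors, and the formulas for $f_2$, $u_2$, $v_2$ are obtained from those for $f_1$, $u_1$, $v_1$ by interchanging $(a,b) \leftrightarrow (c,d)$ and transforming $x \to 1/x$. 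I would state this symmetry explicitly and then invoke the verified first identity.

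Once $u_1$ is known to be a nonconstant rational function of degree $3$ in $x$ — which is immediate from the form $u_1 = 12\Delta_1 L_1 / P$ with $L_1$ linear, $P$ cubic, and $\gcd(L_1, P) = 1$ generically (a consequence of $\Delta_1 \ne 0$) — the morphism $(x,y) \mapsto (u_1, yv_1)$ has degree $3$. The same reasoning applies to $u_2$.

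For the discriminants, the factorization $f = PQ$ gives
\[
\Delta(f) = \Delta(P) \cdot \Delta(Q) \cdot \mathrm{Res}(P,Q)^2,
\]
where $\Delta(P) = -108(a^3 + b^2) = -108 \Delta_1$, $\Delta(Q) = -108(c^3 + d^2) = -108\Delta_2$ by the standard depressed-cubic formula (after the change of variable turning $Q$ into a depressed cubic, using the relation $12ac + 16bd = 1$), and $\mathrm{Res}(P,Q)$ is a polynomial in $a,b,c,d$ that one computes to equal $\pm 4 \Delta_1 \Delta_2$, again using $12ac + 16bd = 1$. Multiplying these contributions yields the claimed value $2^8 \cdot 3^{12} \Delta_1^3 \Delta_2^3$ (with sign $+$). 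The discriminants of the cubics $f_1$ and $f_2$ are then direct depressed-cubic computations with the given coefficients, again simplified by repeated use of $12ac + 16bd = 1$.

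The main obstacle is not conceptual but bookkeeping: the big polynomial identity $f_1(u_1) \cdot P^3 = \Delta_1^2 N_1^2 Q$ has many terms, and the $\Delta(f_i)$ expressions simplify only after nontrivial substitutions involving the constraint $12ac + 16bd = 1$. I would carry out all these verifications in a computer algebra system, working in $\ZZ[a,b,c,d,x]$ modulo the ideal generated by $12ac + 16bd - 1$, and verify that each claimed identity reduces to zero.
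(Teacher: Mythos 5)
Your overall strategy is exactly what the paper intends: the authors explicitly defer the verification to ``the reader and his or her computational algebra package,'' and your plan — clear denominators to get the identity $f_1(u_1)\,P^3 = \Delta_1^2 N_1^2 Q$, exploit the involution $(a,b,c,d,x)\mapsto(c,d,a,b,1/x)$ to deduce the second identity, and use $\Delta(PQ)=\Delta(P)\Delta(Q)\operatorname{Res}(P,Q)^2$ for the genus-$2$ discriminant — is the natural way to organize that computation. The symmetry observation is correct and genuinely reduces the work.

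However, your intermediate resultant value is wrong, and as written your computation does not produce the claimed formula for $\Delta(f)$. You assert $\operatorname{Res}(P,Q)=\pm4\Delta_1\Delta_2$, which together with $\Delta(P)=\Delta(Q)\cdot(\Delta_1/\Delta_2)=-108\Delta_1$ and $\Delta(Q)=-108\Delta_2$ would give $\Delta(f)=108^2\cdot16\,\Delta_1^3\Delta_2^3=2^8\cdot3^{6}\,\Delta_1^3\Delta_2^3$, off by a factor of $3^6$ from the lemma. The correct value is $\operatorname{Res}(P,Q)=\pm108\,\Delta_1\Delta_2$; one can check this, for instance, by taking $a=c=0$, $b=d=1/4$ (so $16bd=1$, $\Delta_1=\Delta_2=1/16$), where the Sylvester determinant decomposes into three $2\times2$ blocks each equal to $3/4$, giving $\operatorname{Res}=27/64=108\Delta_1\Delta_2$. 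Two smaller slips: the coprimality of the numerator $L_1=-2dx+c$ with $P$ hinges on $\Delta_2\ne0$ (one finds $P(c/2d)=\Delta_2/(8d^3)$), not on $\Delta_1\ne0$ as you state; and the formula $\Delta(Q)=-108\Delta_2$ follows directly from the general cubic discriminant and does not require the constraint $12ac+16bd=1$. None of these affect the soundness of your method — a CAS run as you propose would produce the right answers — but the written-out resultant is an error you would need to correct.
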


The lemma shows that given $a,b,c,d,t$ in $k$ that 
satisfy~\eqref{EQ:conditions}, we obtain a genus-$2$ curve $C_{a,b,c,d,t}$
defined by $t\, y^2 = f$, two elliptic curves $E_{a,b,c,d,t,1}$ and 
$E_{a,b,c,d,t,2}$ defined by $t\, y^2 = f_1$ and $t\, y^2 = f_2$, and 
degree-$3$ maps
\begin{align*}
\varphi_{a,b,c,d,t,1}\colon C_{a,b,c,d,t}&\to E_{a,b,c,d,t,1} &
\varphi_{a,b,c,d,t,2}\colon C_{a,b,c,d,t}&\to E_{a,b,c,d,t,2} \\
               (x,y) &\mapsto (u_1, y v_1) &
               (x,y) &\mapsto (u_2, y v_2) 
\end{align*}
It is easy to choose values of $a,b,c,d,$ and $t$ in $\QQ$ so that the 
curves $E_{a,b,c,d,t,1}$ and $E_{a,b,c,d,t,2}$ are geometrically nonisogenous.
Therefore, for generic values of $a,b,c,d,$ and $t$ the morphism
\[
\varphi_{a,b,c,d,t,2*}\,\varphi_{a,b,c,d,t,1}^*
\colon E_{a,b,c,d,t,1}\to E_{a,b,c,d,t,2}
\]
is the zero map, so it must be the zero map for all values of $a,b,c,d,$ and 
$t$ in any field.  It follows that $\Jac C_{a,b,c,d,t}$ is isogenous to the 
product of $E_{a,b,c,d,t,1}$ and $E_{a,b,c,d,t,2}$.

Note that if $\lambda$ and $\mu$ are elements of $k^*$, then scaling $x$ by 
$\lambda$ and $y$ by $\mu$ in the equations for these curves and maps is 
equivalent to replacing the quintuple $(a,b,c,d,t)$ with 
$(\lambda^2 a, \lambda^3 b, \lambda^{-2} c, \lambda^{-3}d, \lambda\mu^2 t)$.
This gives an action of $k^*\times k^*$ on the set of quintuples. Note that one
can always scale a quintuple by this action in order to obtain $t = 1$.

\begin{proposition}
\label{P:3covers}
Let $k$ be a field of characteristic neither $2$ nor $3$. Suppose 
$\varphi_1\colon C\to E_1$ and $\varphi_2\colon C\to E_2$ are degree-$3$ maps 
from a genus-$2$ curve $C$ over $k$ to genus-$1$ curves $E_1$ and $E_2$ 
over~$k$, and suppose that the morphism $\varphi_{2*}\varphi_1^*$ from 
$\Jac E_1$ to $\Jac E_2$ is the zero map.  Then there are elements $a,b,c,d,t$
of $k$ satisfying~\eqref{EQ:conditions} and isomorphisms
$\alpha\colon C\to C_{a,b,c,d,t}$ and
$\alpha_i\colon E_i \to E_{a,b,c,d,t,i}$ such that the diagram
\[
\xymatrix{
E_1\ar[rr]^{\alpha_1}               && E_{a,b,c,d,t,1} \\                             
C\ar[rr]^{\alpha}\ar[d]_{\varphi_2}
                 \ar[u]^{\varphi_1} && C_{a,b,c,d,t}\ar[d]^{\varphi_{a,b,c,d,t,2}}  
                                                    \ar[u]_{\varphi_{a,b,c,d,t,1}} \\
E_2\ar[rr]^{\alpha_2}               && E_{a,b,c,d,t,2}                \\
}
\]
commutes.  The quintuple $(a,b,c,d,t)$ is unique up to the action of 
$k^*\times k^*$ given above.
\end{proposition}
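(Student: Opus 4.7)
The plan is to recover the parameters $(a,b,c,d,t)$ from the geometric data $(C,\varphi_1,\varphi_2)$ by successive normalization, and then to verify uniqueness modulo the stated action.

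First I would show that for each $i$ there is a unique origin $O_i\in E_i$ for which $\varphi_i\circ\iota = [-1]\circ\varphi_i$, where $\iota$ denotes the hyperelliptic involution of $C$. The key point is that $\iota^*$ acts as $-1$ on $H^0(C,\Omega^1)$, so the line $\varphi_i^*H^0(E_i,\Omega^1)$ is contained in the $(-1)$-eigenspace of $\iota^*$ on $H^0(C,\Omega^1)$; this forces $\iota$ to act on the image of $\varphi_i$ via an involution of $E_i$ whose differential is $-1$, uniquely determined by the fixed point~$O_i$.  Once $O_i$ is fixed, with $E_i$ written in Weierstrass form having $O_i$ at infinity and $C$ written as $ty^2=f(x)$ with $\deg f=6$, the map $\varphi_i$ is forced into the shape $(x,y)\mapsto(u_i(x),\,y\,v_i(x))$ with $u_i\in k(x)$ of degree~$3$ and $v_i\in k(x)$.

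Next I would analyze the pullback divisor $D_i:=\varphi_i^*(O_i)$, a $\iota$-stable effective divisor of degree~$3$ supported above the poles of~$u_i$.  The orthogonality hypothesis $\varphi_{2*}\varphi_1^*=0$ implies that $\varphi_1^*\oplus\varphi_2^*\colon E_1\times E_2\to\Jac C$ is an isogeny of degree~$9$ whose kernel is the graph of an anti-isometric isomorphism $E_1[3]\to E_2[3]$; using this structure I would rule out the degenerate cases in which $D_i$ has a non-Weierstrass point in its support or contains a point of multiplicity greater than~$1$, concluding that each $D_i$ is the sum of three distinct Weierstrass points of $C$ and that $D_1,D_2$ have disjoint support.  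Consequently the six Weierstrass points of $C$ partition into the roots of two cubics $P(x)$ and $\widetilde Q(x)$, giving $f(x)=\mu P(x)\widetilde Q(x)$ for some $\mu\in k^*$ that is absorbable into~$t$.  The coordinate freedom on the $x$-line (a three-parameter M\"obius group) plus rescaling of $y$ then lets one put $P$ into depressed form $P=x^3+3ax+2b$, scale the constant term of $\widetilde Q$ to~$1$, and eliminate its linear coefficient, yielding $\widetilde Q=2dx^3+3cx^2+1$.

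Finally, with $C$ in the form $ty^2=P(x)Q(x)$, I would determine $u_i$ and $v_i$ by imposing the polynomial identity $P(x)Q(x)\,v_i(x)^2=g_i(u_i(x))$, where $g_i$ is the Weierstrass cubic defining $E_{a,b,c,d,t,i}$; matching coefficients produces the explicit rational functions given in Lemma~\ref{L:maps} and, crucially, forces the identity $12ac+16bd=1$.  Uniqueness of the quintuple $(a,b,c,d,t)$ up to the $k^*\times k^*$ action follows by tracking the residual scaling $x\mapsto\lambda^2 x$, $y\mapsto\lambda\mu^2 y$ that survives all of the above normalizations.  The main obstacle is the second step: ruling out, in sufficient generality and while keeping track of fields of definition, the configurations in which $D_i$ has non-Weierstrass support or positive multiplicity.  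This requires careful use of the polarized isogeny $\Jac C\sim E_1\times E_2$ furnished by the orthogonality hypothesis, and probably a case-by-case argument at the degenerate boundary of the parameter space.
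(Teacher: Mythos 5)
Your normalization strategy is genuinely different from the paper's, and it has a gap at the first step that makes the rest harder than you acknowledge.

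The paper normalizes by a different marked-point construction. By Riemann--Hurwitz, each $\varphi_i$ has ramification divisor of degree $2$; Kuhn shows that the ramification consists of two points conjugate under the hyperelliptic involution $\iota$ (the ``generic'' case) or of a single Weierstrass point with index $3$ (the ``special''/``degenerate'' case). In either case the ramification determines a single $x$-coordinate $x_i$ on $\PP^1_C$. The paper moves $x_1$ to $\infty$ and $x_2$ to $0$, then cites Kuhn's explicit normal form (generic case) or Shaska's (special case) to obtain the model $C_{a,b,c,d,t}$, and finally invokes Lemma~\ref{L:ramification} to produce the isomorphisms $\alpha_i$. The hypothesis $\varphi_{2*}\varphi_1^*=0$ is used only indirectly, via Lemma~\ref{L:ramification}: if the ramification divisors of $\varphi_1$ and $\varphi_2$ agreed, then $\varphi_2$ would factor through $\varphi_1$ by an isomorphism $E_1\to E_2$, and $\varphi_{2*}\varphi_1^*$ would be multiplication by $3$ composed with an isomorphism, not zero. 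So $x_1\ne x_2$, and the two marked points are honestly distinct.

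By contrast, you normalize using the pullback divisors $D_i=\varphi_i^*(O_i)$ and the induced splitting of the six Weierstrass $x$-coordinates into two triples. Two concrete problems arise.

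First, the claim that the origin $O_i$ is \emph{unique} is false. As you correctly observe, the involution $\sigma_i$ of $E_i$ induced by $\iota$ acts as $-1$ on $H^0(E_i,\Omega^1)$, hence is not a translation and has four fixed points; but $\sigma_i = [-1]_{O}$ for \emph{any} of those four points $O$ taken as origin. So the relation $\varphi_i\circ\iota=[-1]_{O_i}\circ\varphi_i$ determines $O_i$ only up to $E_i[2]$, and hence the divisor $D_i=\varphi_i^*(O_i)$ is a priori one of four degree-$3$ divisors whose sum is $\varphi_i^*(E_i[2])$. In the family $C_{a,b,c,d,t}$, exactly one of these four choices gives a divisor supported on three distinct Weierstrass points (the one with $O_1$ the point at infinity of $t\,y^2=f_1$, pulling back to the roots of $x^3+3ax+2b$), while the other three choices produce divisors whose support mixes Weierstrass and non-Weierstrass points. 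Your argument has to single out the right choice, and as written it does not.

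Second, even granting a correct choice of $O_i$, the assertions that $D_i$ consists of three \emph{distinct} Weierstrass points and that $D_1$, $D_2$ are disjoint are the real content of your step~2, and the appeal to the Frey--Kani degree-$9$ isogeny is too vague to carry the argument. You flag this obstacle yourself, which is good, but nothing in the proposal sketches why the polarized isogeny should preclude, say, a Weierstrass point appearing with multiplicity $3$ in $D_1$, or a common Weierstrass point in $D_1\cap D_2$. The paper avoids this entire issue by working with the ramification divisor (degree $2$), whose structure is controlled directly by Riemann--Hurwitz plus Kuhn's lemma, rather than with a preimage divisor (degree $3$) whose combinatorics are less constrained. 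Note also that the ``degenerate'' boundary in the paper ($b=0$ or $d=0$) does \emph{not} correspond to degenerate behavior of $D_1$ or $D_2$ -- those are still sums of three distinct Weierstrass points there -- so your framing of the degenerate case as ``$D_i$ has a non-Weierstrass point or a multiple point'' misidentifies the actual boundary of the parameter space.

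If you want to salvage your route, the cleanest fix is to replace $D_i=\varphi_i^*(O_i)$ with the ramification divisor of $\varphi_i$, as the paper does: Riemann--Hurwitz forces it to have degree $2$, Kuhn's lemma tells you exactly where it can sit, Lemma~\ref{L:ramification} plus the orthogonality hypothesis shows the two ramification $x$-coordinates are distinct, and the rest of your normalization and coefficient-matching plan then goes through with only the surviving scaling freedom, giving both existence and the uniqueness modulo $k^*\times k^*$.
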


The following lemma will be helpful in our proof of the proposition.

\begin{lemma}
\label{L:ramification}
Suppose $\varphi\colon C\to E$ and $\psi\colon C\to F$ are degree-$3$ maps 
from a curve $C$ to genus-$1$ curves $E$ and $F$ over a field $k$.  If 
$\varphi$ and $\psi$ have the same ramification divisor, then there is an
isomorphism $\alpha\colon E\to F$ such that $\psi = \alpha\varphi$.
\end{lemma}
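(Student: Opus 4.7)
My plan is to analyze the product morphism $\Phi=(\varphi,\psi)\colon C\to E\times F$ through its image. Let $S\subseteq E\times F$ denote this image and let $d$ be the degree of the induced map $C\to S$. Because the two projections $S\to E$ and $S\to F$ then have degrees $3/d$, one must have $d\in\{1,3\}$. When $d=3$, the projection $S\to E$ is birational; after identifying the normalization of $S$ with $E$, the second projection yields a birational morphism between smooth projective curves, hence an isomorphism $\alpha\colon E\to F$ satisfying $\psi=\alpha\varphi$ by construction. The entire proof therefore reduces to ruling out $d=1$, and this is the only step where I will need the ramification hypothesis.

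To set that up, Riemann--Hurwitz together with $K_E\sim K_F\sim 0$ gives $K_C\sim R$ for the common ramification divisor $R$. Choosing nonzero regular differentials $\omega_E$ on $E$ and $\omega_F$ on $F$, I find that both $\varphi^*\omega_E$ and $\psi^*\omega_F$ are global sections of $\omega_C$ whose divisors equal $R$ exactly. Any two sections of a line bundle with the same divisor differ by a scalar (the ratio is a global rational function with neither zeros nor poles), so after rescaling $\omega_F$ I may assume $\varphi^*\omega_E=\psi^*\omega_F$ in $H^0(C,\omega_C)$.

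Now suppose for contradiction that $d=1$, so $C\to S$ is birational. On the abelian surface $A=E\times F$ I consider the $1$-form $\Omega=\operatorname{pr}_1^*\omega_E-\operatorname{pr}_2^*\omega_F$. It is translation-invariant and nonzero, since $\operatorname{pr}_1^*\omega_E$ and $\operatorname{pr}_2^*\omega_F$ span complementary summands of $H^0(A,\Omega^1_A)\cong H^0(E,\Omega^1_E)\oplus H^0(F,\Omega^1_F)$. By construction $\Omega$ pulls back to $0$ on $C$, and birationality of $C\to S$ then forces $\Omega$ to vanish on the smooth locus of $S$. The kernel distribution of a nonzero translation-invariant $1$-form on an abelian variety is integrable and its leaves are the translates of a single connected algebraic subgroup $G\subseteq A$, which here is one-dimensional. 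As $S$ is an irreducible curve tangent to this foliation it is contained in --- and hence, by dimension, equals --- a translate of $G$, so $S$ is a smooth elliptic curve. The birational morphism $C\to S$ between smooth projective curves is then an isomorphism, forcing $g_C=1$; this contradicts $\deg R=2g_C-2>0$, which holds in the paper's genus-$2$ setting.

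The main obstacle is the $d=1$ case, where $\varphi$ and $\psi$ cut out genuinely different fibrations of $C$; the key insight is that the ramification hypothesis, once translated through pullback of differentials, supplies a nonzero translation-invariant $1$-form on $E\times F$ that must vanish on the image curve, and such a curve is necessarily a translate of a subgroup --- an elliptic curve --- which is incompatible with the birational morphism from a curve of genus $\ge 2$.
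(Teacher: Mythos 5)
Your reduction to the two cases $d\in\{1,3\}$ via the image $S\subset E\times F$ is a genuinely different route from the paper's (which works in $\Jac C$), and your treatment of the $d=3$ case and of the scalar proportionality of $\varphi^*\omega_E$ and $\psi^*\omega_F$ are both fine.  The gap is in the $d=1$ case.  The assertion that ``the kernel distribution of a nonzero translation-invariant $1$-form on an abelian variety is integrable and its leaves are the translates of a single connected algebraic subgroup $G\subseteq A$'' is false.  The kernel of $\Omega$ at the identity is a line $V\subset T_0A$, and there is in general no reason for $V$ to be the tangent space of an algebraic subgroup.  In fact, in the situation that actually occurs in this paper --- one of $E,F$ ordinary and the other supersingular, so $E$ and $F$ not geometrically isogenous --- the only one-dimensional abelian subvarieties of $A=E\times F$ are, up to isogeny, $E\times 0$ and $0\times F$, and $\Omega=\operatorname{pr}_1^*\omega_E-\operatorname{pr}_2^*\omega_F$ is nonzero on both tangent lines.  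So the subgroup $G$ you invoke simply does not exist, and ``$S$ is a translate of $G$, hence a smooth elliptic curve'' has no basis.  (Over $\CC$ this is already fatal; over general $k$ the foliation/leaf language is additionally delicate in positive characteristic, which is the case the paper cares about.)

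The conclusion you are aiming for is true, but the way to get it is to push the Albanese property one step further rather than appeal to integrability.  Pass to the normalization $\tilde S$ of $S$; the map $\tilde S\to A$ factors, up to translation, through a homomorphism $g\colon\Jac\tilde S\to A$.  Since the pullback of $\Omega$ to $\tilde S$ vanishes and $H^0(\Jac\tilde S,\Omega^1)\to H^0(\tilde S,\Omega^1)$ is an isomorphism, one gets $g^*\Omega=0$, so $\Omega$ vanishes on the abelian subvariety $B=g(\Jac\tilde S)$; dimension count then gives $\dim B=1$ and $S$ a translate of $B$, and your contradiction proceeds.  But this fix is essentially the paper's proof in disguise: the paper embeds $C$ into its Jacobian $J$, uses the isomorphism $H^0(J,\Omega^1)\cong H^0(C,\Omega^1)$ from Milne to transfer the proportionality of $\varphi^*\omega_E$ and $\psi^*\omega_F$ up to $J$, and concludes that the images of $E$ and $F$ in $J$ coincide.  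So the geometric framing in $E\times F$ is attractive, but the step that must drive the $d=1$ contradiction is the Jacobian/Albanese pullback isomorphism, not a claim about algebraicity of the leaves of an invariant $1$-form.
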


\begin{remark}
The argument given by Kuhn~\cite{Kuhn}*{Corollary, p.~45} shows that $E$ and
$F$ both have rational points, so they can be made into elliptic curves.  We 
phrase the lemma and the proposition in terms of genus-$1$ curves because the
isomorphism $\alpha$ in the lemma, and the isomorphisms $\alpha_1$ and 
$\alpha_2$ in the proposition, may not be morphisms of elliptic curves --- they
do not necessarily take the identity element of one curve to the identity 
element of the other.
\end{remark}

\begin{proof}[Proof of Lemma~\textup{\ref{L:ramification}}]
Let $\omega_E$ and $\omega_F$ be nonzero holomorphic differentials on $E$ 
and~$F$. The pullbacks $\varphi^*\omega_E$ and $\psi^*\omega_F$ are holomorphic
differentials on~$C$, and the divisors of these differentials are the 
ramification divisors of the maps $\varphi$ and $\psi$. Since the ramification
divisors are equal by assumption, the two pullbacks differ by a multiplicative
constant.

Let $\kb$ be the algebraic closure of $k$, and let $C_{\kb}$ and $J_{\kb}$
be the base extensions of $C$ and its Jacobian $J$ to $\kb$.  One can embed
$C_{\kb}$ into $J_{\kb}$, and the embedding induces an isomorphism from the 
holomorphic differentials on $J_{\kb}$ to the holomorphic differentials on 
$C_{\kb}$ (see~\cite{Milne}*{Proposition~2.2}). This shows that the pullbacks 
of $\omega_E$ and $\omega_F$ to $J$ also differ by a constant, so that the 
images of $E$ and $F$ in $J$ are the same. But since the degrees of $\varphi$
and $\psi$ are prime, $E$ and $F$ are isomorphic to their images in $J$.  The
induced isomorphism $\alpha\colon E\to F$ then satisfies 
$\psi = \alpha\varphi$.
\end{proof}

\begin{proof}[Proof of Proposition~\textup{\ref{P:3covers}}]
The Riemann--Hurwitz formula shows that the map $\varphi_1$ is ramified either
at $2$ points, with ramification index $2$ (Shaska's `nondegenerate' case, and
Kuhn's `generic' case), or at one point, with ramification index~$3$ (Shaska's
`degenerate' case, and Kuhn's `special' case).  Kuhn 
shows~\cite{Kuhn}*{Lemma, p.~42} that in the former case the two ramification
points are conjugate with respect to the hyperelliptic involution, and that in
the latter case the single ramification point is a Weierstrass point.  Let
$\PP^1_C$ be the quotient of $C$ by the hyperelliptic involution. We can choose
a parameter $x$ on $\PP^1_C$ so that the $x$-coordinate of the ramification 
points (or point) is equal to~$0$.

Suppose we are in Kuhn's generic case.  Kuhn shows~\cite{Kuhn}*{\S6} that then
$C$ has a model of the form
\[
y^2 = (x^3 + \ell x^2 + m x + n)(4 n x^3 + m^2 x^2 + 2 m n x + n^2),
\]
where $n\neq 0$, and that the ramification point(s) of the map $\varphi_2$
then have $x$-coordinate equal to $-3n/m$.  

If we apply a linear fractional transformation that takes $0$ to $\infty$ and 
$-3n/m$ to~$0$, we find that the model for $C$ transforms to a curve of the 
form $C_{a,b,c,d,t}$.  Furthermore, the ramification points of the maps 
$\varphi_1$ and $\varphi_2$ are the same as the ramification points for 
$\varphi_{a,b,c,d,t,1}$ and $\varphi_{a,b,c,d,t,2}$, so by 
Lemma~\ref{L:ramification}, there are isomorphisms $\alpha$, $\alpha_1$, and 
$\alpha_2$ as in the statement of the proposition so that the diagram in the 
proposition is commutative.

Now suppose that $\varphi_1$ is degenerate, in Shaska's terminology. Arguing as
in~\cite{Shaska}*{\S2.2}, but keeping track of fields of definition, we find 
that by moving the $x$-coordinate of the ramification point(s) of $\varphi_1$
to $\infty$ and by translating and scaling $x$ appropriately, we can write $C$
as 
\[
y^2 = (3 x^2 + 4 m)(x^3 + m x + n),
\]
where $m\neq 0$; then we compute that the ramification point(s) of the map 
$\varphi_2$ have $x$-coordinate equal to $0$. Once again, by applying 
Lemma~\ref{L:ramification}, we find that there are isomorphisms $\alpha$, 
$\alpha_1$, and $\alpha_2$ as in the statement of the proposition so that the
diagram in the proposition is commutative.

To complete the proof, we must show that the quintuple $(a,b,c,d,t)$ is unique
up to the action of $k^*\times k^*$.  We obtained our model $C_{a,b,c,d,t}$ for
the curve $C$ by taking two marked points on $\PP^1_C$ --- namely, the 
$x$-coordinates of the ramification points of the maps $\varphi_1$ and
$\varphi_2$ --- and moving them to $\infty$ and $0$, respectively. That choice
determines the parameter $x$ of $\PP^1_C$ up to a scaling factor.  But the 
action of $k^*\times k^*$ on quintuples $(a,b,c,d,t)$ is exactly the action 
obtained from scaling the coordinates $x$ and $y$ for $C_{a,b,c,d,t}$.
\end{proof}

\subsection{Additional formulas}
We compute that the $j$-invariants of the elliptic curves $E_{a,b,c,d,t,1}$ 
and $E_{a,b,c,d,t,2}$ are given by
\begin{align}
\label{EQ:j1}
j(E_{a,b,c,d,t,1}) &= \frac{ 1728 (a^2 c + 4 a b d - 4 b^2 c^2)^3}
                           { \Delta_1^2 \, \Delta_2   }, \\
\label{EQ:j2}
j(E_{a,b,c,d,t,2}) &= \frac{ 1728 (a c^2 + 4 b c d - 4 a^2 d^2)^3}
                           { \Delta_1   \, \Delta_2^2 }.
\end{align}
We use these $j$-invariant formulas in Algorithm~\ref{A:glue3}.

Let $\omega_1$ and $\omega_2$ be the invariant differentials $dx/2y$ on 
$E_{a,b,c,d,t,1}$ and $E_{a,b,c,d,t,2}$, respectively. It is not hard to verify
that then we have
\[
\varphi_{a,b,c,d,t,1}^* \omega_1 = \frac{3 \, dx}{2y}
\text{\quad and\quad} 
\varphi_{a,b,c,d,t,2}^* \omega_2 = \frac{3 x\, dx}{2y}
\]
on the curve $C_{a,b,c,d,t}$.

\subsection{A note on degeneration}

Note that the map $\varphi_{a,b,c,d,t,1}$ is special (in Kuhn's terminology) 
exactly when $d = 0$, and that the map $\varphi_{a,b,c,d,t,2}$ is special 
exactly when $b = 0$. We close this appendix by explaining why our formulas
degenerate nicely to these special cases, whereas the formulas of Kuhn and 
Shaska do not.

Let $\varphi_1$ and $\varphi_2$ be as above. As Kuhn 
notes~\cite{Kuhn}*{Lemma, p.~42}, the hyperelliptic involution on $C$ descends 
via $\varphi_1$ to an involution on $E_1$ that gives a degree-$2$ map from 
$E_1$ to a projective line~$\PP^1_{E_1}$.  Then $\varphi_1$ induces a 
degree-$3$ map $\varphi_1'$ from $\PP^1_C$ to~$\PP^1_{E_1}$.

Suppose $\varphi_1$ is generic.  Then the two ramification points $P_1$ and 
$Q_1$ of $\varphi_1$ share the same image $x_1$ in $\PP^1_C$, and $x_1$ is 
doubly ramified in the triple cover $\varphi_1'$.  Let $y_1$ be the other point
of $\PP^1_C$ with $\varphi_1'(y_1) = \varphi_1'(x_1)$.

The special maps are the limiting cases that occur when $P_1$ and its involute
$Q_1$ approach a Weierstrass point of $C$.  When $P_1 = Q_1$, the point $x_1$
of $\PP^1_C$ is triply ramified in $\varphi_1'$, so the special maps can also
be viewed as the limiting cases when $y_1$ approaches $x_1$.

Both Kuhn and Shaska choose their parametrizations of generic triple covers 
$C\to E_1$ so that the points $x_1$ and $y_1$ lie at $0$ and~$\infty$.  Since
the special triple covers have $x_1 = y_1$, the parametrizations of Kuhn and 
Shaska cannot degenerate gracefully.

We have chosen our parametrization so that $x_1 = \infty$ and so that the 
corresponding point $x_2$ from the cover $\varphi_2$ lies at $0$. Since 
Lemma~\ref{L:ramification} shows that $x_1$ and $x_2$ can never be equal, there
is no reason for the parametrization to break down at the special covers.

\begin{bibdiv}
\begin{biblist}

\bib{BHP}{article}{
   author={Baker, R. C.},
   author={Harman, G.},
   author={Pintz, J.},
   title={The difference between consecutive primes. II},
   journal={Proc. London Math. Soc. (3)},
   volume={83},
   date={2001},
   number={3},
   pages={532--562},
   note={\href{http://dx.doi.org/10.1112/plms/83.3.532}
              {DOI: 10.1112/plms/83.3.532}},
}

\bib{Broker}{article}{
   author={Br{\"o}ker, Reinier},
   title={Constructing supersingular elliptic curves},
   journal={J. Comb. Number Theory},
   volume={1},
   date={2009},
   number={3},
   pages={269--273},
}

\bib{BS}{article}{
   author={Br{\"o}ker, Reinier},
   author={Stevenhagen, Peter},
   title={Efficient CM-constructions of elliptic curves over finite fields},
   journal={Math. Comp.},
   volume={76},
   date={2007},
   number={260},
   pages={2161--2179},
   note={\href{http://dx.doi.org/10.1090/S0025-5718-07-01980-1}
              {DOI: 10.1090/S0025-5718-07-01980-1}},
}

\bib{CDO}{article}{
   author={Cohen, Henri},
   author={Diaz y Diaz, Francisco},
   author={Olivier, Michel},
   title={Enumerating quartic dihedral extensions of $\mathbb{Q}$},
   journal={Compositio Math.},
   volume={133},
   date={2002},
   number={1},
   pages={65--93},
   note={\href{http://dx.doi.org/10.1023/A:1016310902973}
              {DOI: 10.1023/A:1016310902973}},
}

\bib{EL}{article}{
   author={Eisentr{\"a}ger, Kirsten},
   author={Lauter, Kristin},
   title={A CRT algorithm for constructing genus $2$ curves over finite fields},
   conference={
      title={Arithmetics, geometry and coding theory (AGCT 2005)},
   },
   book={
      editor = {F. Rodier},
      editor = {S. Vl\u adu\c t},
      series={S\'emin. Congr.},
      volume={21},
      publisher={Soc. Math. France, Paris},
   },
   date={2010},
   pages={161--176},
}

\bib{FM}{article}{
   author={Fouquet, Mireille},
   author={Morain, Fran{\c{c}}ois},
   title={Isogeny volcanoes and the SEA algorithm},
   conference={
      title={Algorithmic number theory},
      address={Sydney},
      date={2002},
   },
   book={
      editor = {C. Fieker},
      editor = {D. R. Kohel},
      series={Lecture Notes in Comput. Sci.},
      volume={2369},
      publisher={Springer},
      place={Berlin},
   },
   date={2002},
   pages={276--291},
   note={\href{http://dx.doi.org/10.1007/3-540-45455-1_23}
              {DOI: 10.1007/3-540-45455-1\_23}},
}

\bib{FK}{article}{
   author={Frey, Gerhard},
   author={Kani, Ernst},
   title={Curves of genus $2$ covering elliptic curves and an arithmetical
   application},
   conference={
      title={Arithmetic algebraic geometry},
      address={Texel},
      date={1989},
   },
   book={
      editor = {G. van der Geer},
      editor = {F. Oort},
      editor = {J. H. M. Steenbrink},
      series={Progr. Math.},
      volume={89},
      publisher={Birkh\"auser Boston},
      place={Boston, MA},
   },
   date={1991},
   pages={153--176},
}

\bib{Goursat}{article}{
   author={E. Goursat},
   title={Sur la r\'eduction des int\'egrales hyperelliptiques},
   journal={Bull. Soc. Math. France},
   volume={13},
   year={1885},
   pages={143--162},
   note={\url{http://www.numdam.org/item?id=BSMF_1885__13__143_1}},
}

\bib{Hermite}{article}{
   author={Hermite, Ch.},
   title={Sur un exemple de r\'eduction d'int\'egrales
          ab\'eliennes aux fonctions elliptiques},
   journal={Ann. Soc. Sci. Bruxelles S\'er. I},
   volume={1, 2nd part},
   year={1876},
   pages={1--16},
   note={\url{http://books.google.com/books?id=gAvjAX4hTeYC}},
}

\bib{HLP}{article}{
   author={Howe, Everett W.},
   author={Lepr{\'e}vost, Franck},
   author={Poonen, Bjorn},
   title={Large torsion subgroups of split Jacobians of curves of genus two
   or three},
   journal={Forum Math.},
   volume={12},
   date={2000},
   number={3},
   pages={315--364},
   note={\href{http://dx.doi.org/10.1515/form.2000.008}
              {DOI: 10.1515/form.2000.008}},
}

\bib{HNR}{article}{
   author={Howe, Everett W.},
   author={Nart, Enric},
   author={Ritzenthaler, Christophe},
   title={Jacobians in isogeny classes of abelian surfaces over finite fields},
   journal={Ann. Inst. Fourier (Grenoble)},
   volume={59},
   date={2009},
   number={1},
   pages={239--289},
   note={\url{http://aif.cedram.org:80/aif-bin/item?id=AIF_2009__59_1_239_0}},
}
	
\bib{Jacobi}{article}{
   author={Jacobi, C. G. J},
   title={Review of Legendre's \emph{Trait\'e des fonctions elliptiques, 
          troisi\`eme suppl\'ement}},
   journal={J. Reine Angew. Math.},
   volume={8},
   year={1832},
   pages={413--417},
   note={\url{http://resolver.sub.uni-goettingen.de/purl?PPN243919689_0008}},
}

\bib{Jacobi:Werke}{book}{
   author={Jacobi, C. G. J.},
   title={Gesammelte Werke. B\"ande I--VIII},
   series={Herausgegeben auf Veranlassung der K\"oniglich Preussischen
           Akademie der Wissenschaften. Zweite Ausgabe},
   publisher={Chelsea Publishing Co.},
   place={New York},
   date={1969},
}

\bib{Kani}{article}{
   author={Kani, Ernst},
   title={The number of curves of genus two with elliptic differentials},
   journal={J. Reine Angew. Math.},
   volume={485},
   date={1997},
   pages={93--121},
   note={\href{http://dx.doi.org/10.1515/crll.1997.485.93}
              {DOI: 10.1515/crll.1997.485.93 }},
}

\bib{Kohel}{thesis}{
   author={Kohel, David Russell},
   title={Endomorphism rings of elliptic curves over finite fields},
   type={Ph.D.~Thesis},
   organization={University of California, Berkeley},
   date={1996},
}

\bib{Kuhn}{article}{
   author={Kuhn, Robert M.},
   title={Curves of genus $2$ with split Jacobian},
   journal={Trans. Amer. Math. Soc.},
   volume={307},
   date={1988},
   number={1},
   pages={41--49},
   note={\href{http://dx.doi.org/10.2307/2000749}
              {DOI: 10.2307/2000749}},
}

\bib{Legendre}{book}{
   author={Legendre, A. M.},
   title={Trait\'e des fonctions elliptiques et des int\'egrales Eul\'eriennes, 
          Tome troisi\`eme},
   date={1828},
   publisher={Huzard--Courcier},
   place={Paris},
   note={\url{http://gallica.bnf.fr/ark:/12148/bpt6k110149h}},

}

\bib{Louboutin}{article}{
   author={Louboutin, St{\'e}phane R.},
   title={Lower bounds for relative class numbers of imaginary abelian
   number fields and CM-fields},
   journal={Acta Arith.},
   volume={121},
   date={2006},
   number={3},
   pages={199--220},
   note={\href{http://dx.doi.org/10.4064/aa121-3-1}
              {DOI: 10.4064/aa121-3-1}},
}

\bib{Matomaki}{article}{
   author={Matom{\"a}ki, Kaisa},
   title={Large differences between consecutive primes},
   journal={Q. J. Math.},
   volume={58},
   date={2007},
   number={4},
   pages={489--518},
   note={\href{http://dx.doi.org/10.1093/qmath/ham021}
              {DOI: 10.1093/qmath/ham021}},
}

\bib{Mestre}{article}{
   author={Mestre, Jean-Fran{\c{c}}ois},
   title={Construction de courbes de genre $2$ \`a partir de leurs modules},
   conference={
      title={Effective methods in algebraic geometry},
      address={Castiglioncello},
      date={1990},
   },
   book={
      editor = {T. Mora},
      editor = {C. Traverso},
      series={Progr. Math.},
      volume={94},
      publisher={Birkh\"auser Boston},
      place={Boston, MA},
   },
   date={1991},
   pages={313--334},
}

\bib{Milne}{article}{
   author={Milne, J. S.},
   title={Jacobian varieties},
   conference={
      title={Arithmetic geometry},
      address={Storrs, Conn.},
      date={1984},
   },
   book={
      publisher={Springer},
      place={New York},
   },
   date={1986},
   pages={167--212},
   note={\url{http://jmilne.org/math/articles/1986c.pdf}},
}

\bib{Shaska}{article}{
   author={Shaska, T.},
   title={Genus $2$ fields with degree $3$ elliptic subfields},
   journal={Forum Math.},
   volume={16},
   date={2004},
   number={2},
   pages={263--280},
   note={\href{http://dx.doi.org/10.1515/form.2004.013}
              {DOI: 10.1515/form.2004.013}},
}

\bib{Streng}{article}{
   author={Streng, Marco},
   title={Computing Igusa class polynomials},
   journal={Math. Comp.},
   volume={83},
   date={2014},
   number={285},
   pages={275--309},
   note={\href{http://dx.doi.org/10.1090/S0025-5718-2013-02712-3}
              {DOI: 10.1090/S0025-5718-2013-02712-3}},
}

\bib{Sutherland}{article}{
   author={Sutherland, Andrew V.},
   title={Accelerating the CM method},
   journal={LMS J. Comput. Math.},
   volume={15},
   date={2012},
   pages={172--204},
   note={\href{http://dx.doi.org/10.1112/S1461157012001015}
              {DOI: 10.1112/S1461157012001015}},
}

\bib{Tate}{article}{
   author={Tate, John},
   title={Classes d'isog\'enie des vari\'et\'es ab\'eliennes sur un corps fini 
          (d'apr\`es T. Honda)},
   pages={95--110},
   book={
     title={S\'eminaire Bourbaki. Vol. 1968/69: Expos\'es 347--363},
     series={Lecture Notes in Mathematics, Vol. 179},
     publisher={Springer-Verlag},
     place={Berlin},
     date={1971},
     pages={iv+295},
   },
   note={\href{http://dx.doi.org/10.1007/BFb0058807}
              {DOI: 10.1007/BFb0058807}},
}

\end{biblist}
\end{bibdiv}

\end{document}